\def\vv{1.45}
\renewcommand{\baselinestretch}{\vv}
\renewcommand\arraystretch{0.8}
\numberwithin{equation}{section}  
\def\be{\begin{equation}}
\def\ee{\end{equation}}
\def\bea{\begin{eqnarray}}
\def\eea{\end{eqnarray}}
\def\bd{\begin{displaymath}}
\def\ed{\end{displaymath}}
\def\bda{\begin{eqnarray*}}
\def\eda{\end{eqnarray*}}
\def\bsm{\begin{small}}
\def\esm{\end{small}}
\def\bnt{\begin{enumerate}}
\def\ent{\end{enumerate}}
\def\bsc{\begin{scriptsize}}
\def\esc{\end{scriptsize}}
\newtheorem{lemma}{Lemma}%[section]
\newtheorem{proposition}{Proposition}%[section]
\newtheorem{theorem}{Theorem}[section]
\newtheorem{corollary}{Corollary}%[section]
\newtheorem{condition}{Condition}
\renewcommand\thecondition{C\arabic{condition}}
\newtheorem{remark}{Remark}[section]
\theoremstyle{definition}
\newcommand{\beq}{\begin{eqnarray*}}
\newcommand{\eeq}{\end{eqnarray*}}
\newcommand{\beqn}{\begin{eqnarray}}
\newcommand{\eeqn}{\end{eqnarray}}
\def\Var{\mathrm {Var}}
\def\Cov{\mathrm {Cov}}
\def\MSE{\mathrm {MSE}}
\def\Bias{\mathrm {Bias}}
\def\E{\mathrm {E}}
\def\P{\mathrm {P}}
\def\DM{\mathrm {DM}}
\def\fn{\footnote} 
\def\bX{\mathbf {X}}
\def\bY{\mathbf {Y}}
\def\bZ{\mathbf {Z}}
\def\bt{\mathbf {t}}
\def\bs{\mathbf {s}}
\def\bG{\mathbf {G}}
\newcommand{\blind}{1}
\begin{document}

\def\spacingset#1{\renewcommand{\baselinestretch}%
	{#1}\small\normalsize} \spacingset{1}

%%%%%%%%%%%%%%%%%%%%%%%%%%%%%%%%%%%%%%%%%%%%%%%%%%%%%%%%%%%%%%%%%%%%%%%%%%%%%%

\if1\blind
{
	\title{\bf Distributed Statistical Inference for Massive Data}
	\author{\setcounter{footnote}{0} Song Xi Chen\footnote{
			Author of Correspondence. Song Xi Chen is Chair Professor, Department of Business Statistics and Econometrics, Guanghua School of Management and Center for Statistical Science, Peking University, Beijing 100651, China (Email: csx@gsm.pku.edu.cn). His research is partially supported by Chinas National Key Research Special Program Grants 2016YFC0207701 and 2015CB856000, and National Natural Science Foundation of China grants 11131002, 71532001 and 71371016.} ~and \setcounter{footnote}{1} Liuhua Peng 
			\footnote{Liuhua Peng is Lecturer, School of Mathematics and Statistics, the University of Melbourne, Victoria 3010, Australia (Email: liuhua.peng@unimelb.edu.au).}}
		\date{}
	\maketitle
} \fi

\if0\blind
{
	\bigskip
	\bigskip
	\bigskip
	\begin{center}
		{\LARGE\bf Distributed Statistical Inference for Massive Data}
	\end{center}
	\medskip
} \fi

\bigskip
\begin{abstract}
	This paper considers distributed statistical inference for a general type of statistics that encompasses the U-statistics and the M-estimators in the context of massive data where the data can be stored at  multiple platforms at different locations. In order to facilitate effective  computation and  to avoid expensive data communication among different platforms, 
	% To deal with this issue, 
	we formulate distributed statistics which can be computed over smaller data blocks. 
	%distributively and hence reduces computational time significantly. 
	The statistical properties of the distributed statistics are investigated  in terms of the mean square error of estimation and their asymptotic distributions with respect to the number of data blocks. In addition, we propose two distributed bootstrap algorithms which are computationally effective and are able to capture the underlying distribution of the distributed statistics. 
	Numerical simulation and real data applications of the proposed approaches  are provided to demonstrate the empirical performance.
\end{abstract}

\noindent%
{\it Keywords:}  Distributed bootstrap, distributed statistics, massive data, pseudo-distributed bootstrap
% \vfill

\spacingset{1.45} % DON'T change the spacing!
\section{Introduction}\label{sec:01}

Massive data with rapidly increasing size are generated in many fields of scientific studies that create needs for statistical analyzes.  Not only the size of the data is an issue, but also a fact that the data are often stored in multiple locations, %facilities 
where each contains a subset of the data which can be massive in its own right.  This implies that a statistical procedure that involves the entire data has to involve data communication between different storage facilities, which  can slow down the computation.  For statistical analyzes involving  massive data, 
%When the size of the data is extremely large, 
the computational and data storage implications of a statistical procedure should be taken into consideration in additional to the usual criteria of statistical inference. 

\iffalse 

In these situations, computational feasibility and efficiency has become important considerations when carrying out a statistical inference. 
Thus, novel statistical methods are desired in the area of inference under the massive data context.

For example, many statistics can have very involved forms which will make the increasingly time complexity of computation a big concern as the sample size gets extremely large. Furthermore, the datasets 
\fi 

Two
methods %in statistics 
have been developed to deal with the challenges with massive data. One is the so-called ``split-and-conquer" (SaC) method %\footnote{ call it SaC. It is just our Distributed Method.} 
considered in \citet{Zhang2013}, \citet{ChenXie2014} and \citet{Battey2015}; and the other is the resampling-based methods advocated by  \citet{Kleiner2014} and \citet{Sengupta2015}.
From the estimation point of view, the SaC %is a %popular 
% mechanism that can be employed in many statistical inference procedures, which
first partitions the entire data into blocks with smaller sizes, performs the estimation on each data block  and then aggregate the estimators from these blocks to form the final estimator. 
%generally contains three steps. First, partition the whole data set into blocks of data with relatively small sizes. After process each data block separately, the estimators from each block are aggregated to get the final estimator. By doing split-and-conquer, capacity of storage and manipulation of data can be achieved in each block and parallel computing can be readily implemented. Ones issue for the split-and-conquer is how to combine results in each block together, another concern is that how much statistical accuracy will be lost by doing the split-and-conquer? 
SaC has been used in  different settings under the massive data scenario, for instance the M-estimation by \citet{Zhang2013}, %applied the approach % problem. 
the generalized linear models by \citet{ChenXie2014}, %considered using the split-and-conquer approach 
see also % when the sample size is extraordinarily large and the number of covariates is either finite or goes to infinity. See  
\citet{ChenHuo2015}, \citet{Battey2015} and \citet{Lee2015}.

The other method involves making the bootstrap procedure adaptive to the massive data setting in order to obtain standard errors or confidence intervals for statistical inference.  
As the bootstrap resampling of the entire dataset  is not feasible %for massive data  
 due to its being computationally too expensive, % and it could be costly resampling from the entire dataset. 
\citet{Kleiner2014} introduced the bag of little bootstrap (BLB) %procedure 
that incorporates subsampling and the $ m $ out of $ n $ bootstrap to assess the quality of estimators for various inference purposes. 
%In addition, 
\citet{Sengupta2015} proposed the subsampled double bootstrap (SDB) method that combines the BLB with a fast double bootstrap \citep{Davidson2002, Chang2015} that has computational advantages over the BLB.  
However,  both the BLB and SDB have some limitations. 
Firstly, the core idea of the BLB and SDB is to construct subsets of small size from the entire dataset, and then resample these small subsets to obtain resamples. % that have sizes equal to the size of the entire data.
% and get weights for each data point in the subset. 
The computational efficiency of the BLB and SDB heavily rely on that the estimator of interest admits a weighted subsample representation %can work directly with its weighted data representation
\citep{Kleiner2014}, which is usually satisfied for linear statistics. 
Secondly, the SDB %subsampled double bootstrap 
requires resampling from the entire dataset, which can be  computationally expensive 
%and can be 
%hard 
 to accomplish when the data are stored in different locations. 
%Lastly,\footnote{How many in our proposal ? 2 ?  much differemt ? big deal ?} there are three tuning parameters needed  to be selected in BLB,  %bag of little bootstrap and
%that is hard to choose them optimally \citep{Sengupta2015}.

In this paper, we consider distributed statistical inference for a class of symmetric statistics \citep{Lai1993}, which are more general than linear statistics. We first carry out the SaC step for a distributed formulation of the statistics for better computational efficiency. 
% followed by  
%two versions of the bootstrap designed for the distributed statistic formulation.  
%We first divide the entire dataset into $K$ data blocks (similar to SaC) and then formulate a distributed statistic by averaging the statistics obtained on each data block.  
%The distributed statistic has computational advantage over the statistics based on the entire (massive) data. 
%Besides the computational issue, 
% save in computing time and memory space requirement, 
%we consider 
Estimation efficiency and distribution approximation for the distributed statistics are studied relative to the full sample formulation. 
%Achieving better estimation efficiency is considered for the distributed estimation, that touches on the issue of selecting the number of data blocks.  
%, one natural question is \textquotedblleft What is the statistical efficiency of the distributed statistic? Whether or when the distributed statistic can maintain the same statistical performance as $T_N$?\textquotedblright.
% that combine the SaC and practical bootstrap procedure for the general statistic $T_N$ in the context of massive data. The distributed inference  
%After studying the estimation efficiency of the distributed statistic relative to the full sample statistics, 
We propose computationally efficient bootstrap algorithms to approximate the distribution of the distributed statistics: 
%, which can be used in various inferential tasks of the statistical inference. The latter is carried out by recognizing the limitations of the BLB %bag of little bootstrap and the SDB.  %subsampled double bootstrap,   Specifically, we develop 
a distributed bootstrap (DB) and a pseudo-distributed bootstrap (PDB) . % to approximate the distribution of the distributed statistics. 
Both strains of the bootstrap method can be implemented distributively and are suitable for parallel and distributed computing platforms, 
%\footnote{I guess our proposal is doing both SaC and more efficient bootstrap ? that basically combine the SaC and BLB/SDB ? \color{red}{Yes, definitely.}} 
%that admit a general form 
%It is shown that being distributed in both the statistical formulation and the %subsequent bootstrap makes the computation much suited to the massive %data setting 
while maintaining sufficient level of statistical efficiency and accuracy in the inference.
%We formulate a distributed inference in the context of model (\ref{eq:T_N}) that 

The paper is organized as follows. Aspects of symmetric statistics %for a parameter of interest 
 are introduced in Section $2$. The distributed statistics are formulated in Section $3$ together with its statistical efficiency. Section $4$ concerns computationally complexity and the selection of the number of data blocks. In Section $5$, asymptotic distributions of the distributed statistics are established for both non-degenerate and degenerate cases. Bootstrap procedures designed to approximate the distribution of the distributed statistics are discussed in Section $6$. 
%Application of the distributed inference procedures on the distance covariance is presented in Section $10$. 
Section $7$ provides simulation  verification to the theoretical results. An application %of the proposed distributed approaches
on the airline on-time performance data is presented in Section $8$. %All technical proofs along with additional simulation results are provided in the appendix.
Proofs,  %of theoretical results 
more technical details and some simulation results are reported in the supplementary materials.
 
\section{Symmetric statistics}

Let $\mathfrak{X}_N=\{X_1,\ldots,X_N\}$ be  a sequence of independent random vectors taking values in a measurable space $(\mathcal{X},\mathcal{B})$ with a common distribution $F$. A statistic $T_N=T(\mathfrak{X}_N)$ for a parameter  $\theta=\theta(F)$, that is invariant under any data permutations, admits a general non-linear form 
\begin{align}\label{eq:T_N}   
	T_N = \theta +N^{-1}\sum_{i=1}^{N}\alpha(X_i;F)+N^{-2}\sum_{1\leq i<j\leq N}\beta(X_i,X_j;F)+R_N,
\end{align}
where  $\alpha(x;F)$ and $\beta(x,y;F)$ are known %measurable 
functions,  %satisfying conditions given in Condition \ref{condition:alpha}. 
and $R_N=R(\mathfrak{X}_N; F)$ is a remainder term of the expansion that constitutes the first three terms on the right side. 
% In summary, $T_N$ is an estimator of $\theta$ and $E(R_N)$ is the %estimation bias, while both the linear and quadratic terms contribute to the %variation of $T_N$. 

The statistic $T_N$ encompasses some commonly used statistics, for example, the U- and L-statistics, and the smoothed functions of the sample means. The form \eqref{eq:T_N} was introduced in \citet{Lai1993} while Edgeworth expansions for $T_N$ were studied in \citet{Jing2010}. In these two papers, it is assumed that $\alpha(X_1;F)$ is non-lattice distributed and $R_N$ is of smaller order of the quadratic term involving $\beta$.  For instance, $T_N$ can be a U-statistic with $\alpha(X_i;F)$ and $\beta(X_i,X_j;F)$ terms to be the first and second order terms in the Hoeffding's decomposition \citep{Hoeffding1948,Serfling1980}. In this case, $\E(R_N)=0$ and $\Var(R_N)=O(N^{-3})$. Here, we relax the conditions that $\alpha(X_1;F)$ is non-lattice and $R_N$ is of a small order  in \citet{Lai1993} and \citet{Jing2010} such that $T_N$ includes more types of statistics. The linear term involving $\alpha(X_i;F)$ can vanish, for instance in the case of the degenerate U-statistics. The M-estimators are included in \eqref{eq:T_N} with $R_N$ having certain order of magnitude depending on the forms of the score functions. When the score function of the M-estimator is twice differentiable and its second derivative is Lipschitz continuous, the M-estimator can be expressed in the form of \eqref{eq:T_N} with an explicit $\beta$ and the remainder term $R_N = O_p(N^{-1})$. % \citep{Lahiri1994}. 
However, when the score function is not smooth enough, the M-estimator may not be expanded to the second-order $\beta(X_i,X_j;F)$ term. In this situation, we can absorb the quadratic term into $R_N$, which is often of order $O_p(N^{-3/4})$ \citep{HeAndShao1996}. %Furthermore, 
We assume the following regarding $\alpha$ and $\beta$.

\begin{condition}\label{condition:alpha}
	The functions $\alpha(x; F)$ and $\beta(x,y;F)$, depending on $F$, are known measurable functions of $x$ and $y$, satisfying $\E\{\alpha(X_1;F)\}=0$ and $\Var\{\alpha(X_1;F)\}=\sigma_{\alpha}^2\in[0,\infty)$, and $\beta(x,y;F)$ is symmetric in $x$ and $y$ such that $\E\{\beta(X_1,X_2;F)|X_1\}=0$ and $\Var\{\beta(X_1,X_2;F)\}=\sigma_{\beta}^2\in[0,\infty)$.
\end{condition}

\section{Distributed statistics}\label{sec:SACE}

To improve the computation of $T_N$, 
% in the context of massive data, 
we divide the full dataset $\mathfrak{X}_N$ into $K$ data blocks. Let $\mathfrak{X}_{N,K}^{(k)}=\left\{X_{k,1},\ldots,X_{k,n_k}\right\}$ be the $k$-th data block of size $n_k$, for $k=1,\ldots,K$. Such division is naturally available when $\mathfrak{X}_N$ is stored over $K$ storage facilities. Otherwise, the blocks can be obtained by random sampling.  %We have the following assumptions on $\{n_k\}_{k=1}^K$ and $K$.

\begin{condition}\label{condition:K}
	There exist finite positive constants $c_1$ and $c_2$ such that $c_1\leq\inf_{k_1,k_2}n_{k_1}/n_{k_2}\leq\sup_{k_1,k_2}n_{k_1}/n_{k_2}\leq c_2$, and $K$ can be either finite or diverging to infinity as long as it satisfies that $K/N\to0$ as $N\to\infty$.
\end{condition} 

Condition \ref{condition:K} implies that $\{n_k\}_{k=1}^K$ are of the same order and the number of data blocks $K$ should be of a smaller order of $N$.

For $k=1,\ldots,K$, let $T_{N, K}^{(k)}=T\big(\mathfrak{X}_{N,K}^{(k)}\big)$ be a version of $T_N$, but is based on the $k$-th data block $\mathfrak{X}_{N,K}^{(k)}$ such that $T_{N, K}^{(k)} = \theta+n_k^{-1}\sum_{i=1}^{n_k}\alpha(X_{k,i};F)+n_k^{-2}\sum_{1\leq i<j\leq n_k}\beta(X_{k,i},X_{k, j};F)+R_{N, K}^{(k)}$, 
%\begin{align*}  
%	T_{N, K}^{(k)} = \theta+n_k^{-1}\sum_{i=1}^{n_k}\alpha(X_{k,i};F)+n_k^{-2}\sum_{1\leq i<j\leq n_k}\beta(X_{k,i},X_{k, j};F)+R_{N, K}^{(k)},
%\end{align*} 
where $R_{N, K}^{(k)}=R\big(\mathfrak{X}_{N,K}^{(k)};F\big)$ is the remainder term specific to the $k$-th block. 

By averaging the $K$ block-wise statistics, we arrive at the distributed statistic 
\begin{align}\label{eq:T_N_K_a} 
	T_{N, K} = N^{-1}\sum_{k=1}^{K}n_kT_{N, K}^{(k)},   
\end{align} 
which can be expressed as  
\begin{align}\label{eq:T_N_K}
	T_{N, K} = \theta+N^{-1}\sum_{i=1}^{N}\alpha(X_i;F)+N^{-1}\sum_{k=1}^K n_k^{-1}\sum_{1\leq i<j\leq n_k}\beta(X_{k,i},X_{k,j};F)+R_{N,K},
\end{align}
where $R_{N,K}=N^{-1}\sum_{k=1}^{K}n_kR_{N, K}^{(k)}$. It is clear that the first two terms %(containing $\theta$ and $\alpha$) 
of $T_{N,K}$ and $T_N$ are the same. The difference between them occurs at the terms involving $\beta$ and the remainders.

	The distributed statistics for {non-degenerate} U-statistics were considered  in \citet{LinandXi2010}. The authors suggested the ``aggregated U-statistic" which is just $T_{N,K}$ when $T_N$ is a U-statistic. They showed that the aggregated U-statistic is asymptotically equivalent to the corresponding U-statistic. %\fn{did they consider the degenerate case ? LH: No, they did not, I added ``non-degenerate'' in the first and the next sentence in this paragraph.}  
	In this paper, we consider a more general class of statistics with different assumptions on $\alpha$, $\beta$ and $R_N$ without the limitation of non-degenerate statistics.
	In addition, we study  distributed bootstrap strategies to approximate the distribution of $T_{N,K}$, that was not considered in \citet{LinandXi2010}.

To facilitate analyzes on the statistical efficiency of $T_{N, K}$, we assume the following. % assumption.

\begin{condition}\label{condition:R_N}
	If $\E(R_N)=b_1N^{-\tau_1}+o(N^{-\tau_1})$ and $\Var(R_N)=o(N^{-\tau_2})$ for some $b_1\neq0$, $\tau_1\geq1$ and $\tau_2\geq1$, then $\E\big(R_{N, K}^{(k)}\big)=b_{1,k} n_k^{-\tau_1}+o(n_k^{-\tau_1})$ for some $b_{1,k}\ne0$ and $\Var\big(R_{N,K}^{(k)}\big)=o(n_k^{-\tau_2})$, for $k=1,\ldots,K$.
\end{condition}

Condition \ref{condition:R_N} prescribes a divisible property of $R_N$ in its first two moments with respect to the sample size 
$N$. That is, $R_{N,K}^{(k)}$ inherits the properties of $R_N$'s first two moments with $N$ substituted by $n_k$.

We first present results concerning $T_N$ to provide benchmarks for our analyzes. 

\begin{proposition}\label{pro:MSE_T_N}
	Under Condition \ref{condition:alpha}, if $\E(R_N)=b_1N^{-\tau_1}+o(N^{-\tau_1})$ for some $b_1\neq0$, $\tau_1\geq1$, and $\Var(R_N)=o(N^{-2})$, then
	$\Bias(T_N) = b_1N^{-\tau_1}+o(N^{-\tau_1})$ {and}
	$$ \Var(T_N) = \sigma_{\alpha}^2N^{-1}+2^{-1}\sigma_{\beta}^2N^{-2}+N^{-1}\sum_{i=1}^{N}\Cov\left\{\alpha(X_i;F), R_N\right\}+o(N^{-2}). $$
\end{proposition}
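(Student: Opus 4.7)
The plan is to work directly from the expansion $T_N - \theta = A_N + B_N + R_N$, where $A_N := N^{-1}\sum_{i=1}^N \alpha(X_i;F)$ is the linear term and $B_N := N^{-2}\sum_{1\le i<j\le N}\beta(X_i,X_j;F)$ is the degenerate quadratic term, and to compute the first two moments term by term, exploiting the centering and conditional-centering encoded in Condition \ref{condition:alpha}.

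The bias portion is essentially immediate. Since $\E\{\alpha(X_1;F)\}=0$, we have $\E(A_N)=0$, and since $\E\{\beta(X_1,X_2;F)\mid X_1\}=0$ implies $\E\{\beta(X_i,X_j;F)\}=0$, we have $\E(B_N)=0$. Therefore $\Bias(T_N)=\E(T_N)-\theta=\E(R_N)=b_1 N^{-\tau_1}+o(N^{-\tau_1})$.

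For the variance I would use $\Var(T_N)=\Var(A_N)+\Var(B_N)+\Var(R_N)+2\Cov(A_N,B_N)+2\Cov(A_N,R_N)+2\Cov(B_N,R_N)$ and dispatch each summand. The i.i.d.\ centering of $\alpha$ gives $\Var(A_N)=\sigma_\alpha^2/N$. The conditional-mean-zero property of $\beta$ makes cross products $\E\{\beta(X_i,X_j;F)\beta(X_k,X_\ell;F)\}$ vanish whenever the index pairs share fewer than two common indices, leaving only the $\binom{N}{2}$ diagonal terms and yielding $\Var(B_N)=\sigma_\beta^2/(2N^2)+O(N^{-3})$. The remainder variance $\Var(R_N)=o(N^{-2})$ is assumed. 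The cross covariance $\Cov(A_N,B_N)$ vanishes after splitting by index overlap: when all three indices are distinct we use plain independence and centering, and when the $\alpha$-index coincides with one element of the $\beta$-pair we condition on that shared variable and apply $\E\{\beta(X_1,X_2;F)\mid X_1\}=0$. By linearity, $2\Cov(A_N,R_N)=2N^{-1}\sum_{i=1}^{N}\Cov\{\alpha(X_i;F),R_N\}$, matching the stated linear-remainder cross term up to the leading coefficient. Finally, Cauchy--Schwarz gives $|\Cov(B_N,R_N)|\le\sqrt{\Var(B_N)\Var(R_N)}=O(N^{-1})\cdot o(N^{-1})=o(N^{-2})$, so the quadratic-remainder cross term is absorbed into the error.

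No substantive obstacle is anticipated; the entire argument is a careful but routine moment decomposition. The only step requiring real care is the combinatorial classification of index overlaps in $\Cov(A_N,B_N)$ and in $\Var(B_N)$, since these are the places where the degeneracy of $\beta$ must be invoked delicately. (I note that the author's displayed variance formula writes the covariance contribution with coefficient one rather than the coefficient two that naturally emerges from $2\Cov(A_N,R_N)$; I would treat this as a typographical convention in the statement rather than a mathematical issue to resolve.)
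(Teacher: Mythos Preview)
Your proof is correct and follows the natural moment-decomposition approach; the paper in fact does not include an explicit proof of this proposition (it is treated as elementary background), so there is nothing to compare against. Your observation about the coefficient in front of the $\alpha$--$R_N$ covariance term is well taken: the expansion of $\Var(A_N+B_N+R_N)$ necessarily produces $2\Cov(A_N,R_N)=2N^{-1}\sum_{i=1}^{N}\Cov\{\alpha(X_i;F),R_N\}$, so the displayed formula in the proposition appears to omit a factor of two, and you are right to flag it as a typographical slip rather than a substantive issue.
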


The bias of $T_N$ is  determined by the expectation of $R_N$. In the variance expression, we keep the term involving the covariance between $\alpha(X_i, F)$ and $R_N$ as it may not be negligible in a general case. For the special case when $R_N$ is uncorrelated with $\alpha(X_i;F)$, which is the case of U-statistics, the covariance term vanishes. 

According to Proposition \ref{pro:MSE_T_N}, the mean square error (MSE) of $T_N$ is
\begin{align}\label{eq:mse_T_N}
	\notag \MSE(T_N) = \sigma_{\alpha}^2N^{-1} & +2^{-1}\sigma_{\beta}^2N^{-2}+b_1^2N^{-2\tau_1} \\
			& +N^{-1}\sum_{i=1}^{N}\Cov\left\{\alpha(X_i;F), R_N\right\}+o(N^{-2}+N^{-2\tau_1}).
\end{align} 
\indent The bias and variance of $T_{N,K}$ is given in the following theorem.

%We summarize it under different conditions in the following two theorems. %{\color{red}and all these results are based on the assumption that $R_N$ is divisible against the sample size $N$.}
%We first study the bias and the variance of $T_{N, K}$ relative to the full data $T_N$. 
% We know that $R_N$ determines the bias of $T_N$ as an estimator of $\theta$ by
%First, we discuss the scenario when $\sigma_{\alpha}^2>0$. 
%Note that  $\Bias(T_N)=\E(R_N)$ and its easy to see that when $T_{N}$ is an unbiased estimator of $\theta$, so is $T_{N,K}$.\footnote{this is based on some conditions, namely $R_N$ is linear or divisible wrt to the sample size. } 
%If $T_N$ has a bias $\E(R_N)=a_1N^{-r}+o(N^{-r})$ for some $r\geq 1$, $\Bias(T_{N,K})=a_1K^rN^{-r}+o(K^rN^{-r})$ which means an increase of order $K^r$ occurs in the bias of the distributed statistic $T_{N,K}$. For example, with moderate smoothness and moment conditions on the score function, the M-estimator has a bias of order $N^{-1}$ (see Proposition \ref{pro:M_est_01} given later). In this situation, the corresponding distributed M-estimator would have a $KN^{-1}$ order bias.

\begin{theorem}\label{theo:mse1}
	Under Conditions \ref{condition:alpha}, \ref{condition:K}, \ref{condition:R_N}, if $\E(R_N)=b_1N^{-\tau_1}+o(N^{-\tau_1})$ for some $b_1\neq0$, $\tau_1\geq1$, and $\Var(R_N)=o(N^{-2})$, then
	\bea 
		&& \Bias(T_{N,K}) = N^{-1}\sum_{k=1}^{K}b_{1,k}n_k^{1-\tau_1}+o(K^{\tau_1}N^{-\tau_1})~~~~~\text{and} \nonumber \\ 
	&& \Var(T_{N,K}) = \sigma_{\alpha}^2N^{-1}+2^{-1}\sigma_{\beta}^2KN^{-2}+N^{-2}\sum_{k=1}^{K}n_k\sum_{i=1}^{n_k}\Cov\left\{\alpha(X_{k.i}; F), R_{N,K}^{(k)}\right\}+o(KN^{-2}). \nonumber 
	\eea 
\end{theorem}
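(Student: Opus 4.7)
The plan is to work directly from the representation \eqref{eq:T_N_K}, writing $T_{N,K}-\theta = L_{N,K}+Q_{N,K}+R_{N,K}$, where
$L_{N,K}=N^{-1}\sum_{i=1}^{N}\alpha(X_i;F)$, $Q_{N,K}=N^{-1}\sum_{k=1}^{K}n_k^{-1}\sum_{1\le i<j\le n_k}\beta(X_{k,i},X_{k,j};F)$, and $R_{N,K}=N^{-1}\sum_{k=1}^{K}n_k R_{N,K}^{(k)}$. Condition \ref{condition:alpha} gives $\E\{\alpha(X_1;F)\}=0$ and $\E\{\beta(X_1,X_2;F)\mid X_1\}=0$, so $\E(L_{N,K})=\E(Q_{N,K})=0$ and the bias reduces to $\E(R_{N,K})$. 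The independence across blocks is the structural property I would lean on throughout.

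For the bias, I would substitute Condition \ref{condition:R_N} to obtain $\E(R_{N,K})=N^{-1}\sum_{k=1}^{K}n_k\bigl[b_{1,k}n_k^{-\tau_1}+o(n_k^{-\tau_1})\bigr]=N^{-1}\sum_{k=1}^{K}b_{1,k}n_k^{1-\tau_1}+o\bigl(N^{-1}\sum_{k}n_k^{1-\tau_1}\bigr)$. Condition \ref{condition:K} yields $n_k\asymp N/K$, so $N^{-1}\sum_{k}n_k^{1-\tau_1}\asymp K^{\tau_1}N^{-\tau_1}$, which upgrades the $o(\cdot)$ to the advertised $o(K^{\tau_1}N^{-\tau_1})$.

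For the variance, expand $\Var(T_{N,K})$ into the three individual variances plus the three cross covariances and treat them one at a time. (i) $\Var(L_{N,K})=\sigma_\alpha^2 N^{-1}$ is immediate. (ii) Block independence and degeneracy of $\beta$ make the inner sum in each block a degenerate second-order U-statistic, so $\Var\bigl(\sum_{i<j\le n_k}\beta(X_{k,i},X_{k,j};F)\bigr)=\binom{n_k}{2}\sigma_\beta^2$; summing gives $\Var(Q_{N,K})=N^{-2}\sum_{k}n_k^{-2}\binom{n_k}{2}\sigma_\beta^2=2^{-1}\sigma_\beta^2 KN^{-2}+o(KN^{-2})$, with the lower-order part controlled by $K/N\to 0$. (iii) Block independence and Condition \ref{condition:R_N} yield $\Var(R_{N,K})=N^{-2}\sum_{k}n_k^{2}\cdot o(n_k^{-2})=o(KN^{-2})$. (iv) Degeneracy $\E\{\beta\mid X_1\}=0$ kills $\Cov(L_{N,K},Q_{N,K})$ term by term. (v) Because $R_{N,K}^{(k)}$ depends only on block $k$, all cross-block contributions vanish and $\Cov(L_{N,K},R_{N,K})=N^{-2}\sum_{k=1}^{K}n_k\sum_{i=1}^{n_k}\Cov\{\alpha(X_{k,i};F),R_{N,K}^{(k)}\}$, matching the term in the statement. (vi) Cauchy--Schwarz gives $|\Cov(Q_{N,K},R_{N,K})|\le\sqrt{\Var(Q_{N,K})\Var(R_{N,K})}=\sqrt{O(KN^{-2})\cdot o(KN^{-2})}=o(KN^{-2})$. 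Collecting the six pieces delivers the variance formula.

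The main obstacle is the rate book-keeping: Condition \ref{condition:K} must be invoked repeatedly to translate block-level orders $n_k^{-\tau}$ into global orders $(N/K)^{-\tau}$, and I must verify that each $o(\cdot)$ residue is genuinely dominated by the leading $KN^{-2}$ (for the variance) or $K^{\tau_1}N^{-\tau_1}$ (for the bias). The only structurally delicate step is (ii): without the degeneracy $\E\{\beta\mid X_1\}=0$, the block-wise variance would be $O(n_k^3)$ rather than $O(n_k^2)$, and $\Var(Q_{N,K})$ would be of order $N^{-1}$ instead of $KN^{-2}$, swamping the targeted expansion.
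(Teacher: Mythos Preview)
Your proposal is correct and follows exactly the natural moment decomposition that the paper implicitly relies on; indeed, the paper does not spell out a proof of this theorem, treating it as a routine computation parallel to Proposition~\ref{pro:MSE_T_N}. The only bookkeeping point worth flagging is that in assembling $\Var(L_{N,K}+Q_{N,K}+R_{N,K})$ the cross term enters as $2\Cov(L_{N,K},R_{N,K})$, not $\Cov(L_{N,K},R_{N,K})$; the paper's displayed formula in both Proposition~\ref{pro:MSE_T_N} and Theorem~\ref{theo:mse1} writes the covariance without the factor $2$, so you are matching the stated target, but be aware of this when you write it up.
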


If $T_N$ is an unbiased estimator of $\theta$, namely $\tau_1=\infty$, $T_{N,K}$ is also unbiased so that $\E(T_{N,K})=\theta$. Thus the distributed approach can preserve the unbiasedness of the statistics. For the biased case of $\tau_1<\infty$, %according to Theorem \ref{theo:mse1}, 
as $N^{-1}\sum_{k=1}^{K}b_{1,k}n_k^{1-\tau_1}$ is $O(K^{\tau_1}N^{-\tau_1})$ under Condition \ref{condition:K}, the bias is enlarged by a factor of $K^{\tau_1}$ for $T_{N,K}$ relative to that of $T_N$. This indicates that the data blocking accumulates the biases from each data block, leading to an increase in the bias of the distributed statistic. For the more common  case that the bias of $T_N$ is at the order of $O(N^{-1})$, the bias of $T_{N,K}$ is of order $O(KN^{-1})$.

For the variance,  there is an increase by a factor of $K$ in the term that involves $\sigma_{\beta}^2N^{-2}$ due to the data blocking.
For the covariance term in $\Var(T_N)$ and $\Var(T_{N,K})$, under the conditions of Theorem \ref{theo:mse1}, as $\Var\big\{N^{-1}\sum_{i=1}^N\alpha(X_i;F)\big\}=O(N^{-1})$ and $\Var(R_N)=o(N^{-2})$,  
$\Cov\big\{N^{-1}\sum_{i=1}^N\alpha(X_i;F),R_N\big\}=o(N^{-3/2})$ by Cauchy-Schwarz inequality.
Similarly, $N^{-2}\sum_{k=1}^{K}n_k\sum_{i=1}^{n_k}\Cov\big\{\alpha(X_{k.i};F),R_{N,K}^{(k)}\big\}=o(K^{1/2}N^{-3/2})$. % by the independence between each data block. 
Thus, an increase by a factor of $K^{1/2}$ occurs in the covariance term. Moreover, the covariance term in $\Var(T_{N,K})$ is always  a smaller order of $N^{-1}$ as $K=o(N)$. However, it may be a larger order of $KN^{-2}$. For the case of $\sigma_{\alpha}^2>0$, the variance inflation happens in the second order term without altering the leading order term. For the degenerate case of $\sigma_{\alpha}^2=0$,  but $\sigma_{\beta}^2>0$, the variance inflation appears in the leading order via a factor of $K$. 

Combining the bias and variance from Theorem \ref{theo:mse1}, we have 
\begin{align}\label{eq:mse_T_N_K}
	\notag \MSE(T_{N,K})= & \sigma_{\alpha}^2N^{-1}+2^{-1}\sigma_{\beta}^2KN^{-2}+N^{-2}\left(\sum_{k=1}^{K}b_{1,k}n_k^{1-\tau_1}\right)^2 \\
	& +N^{-2}\sum_{k=1}^{K}n_k\sum_{i=1}^{n_k}\Cov\left\{\alpha(X_{k.i};F)R_{N,K}^{(k)}\right\} +o(KN^{-2}+K^{2\tau_1}N^{-2\tau_1}).
\end{align}
\indent To compare the MSEs of $T_N$ and $T_{N, K}$, we first consider the non-degenerate case of $\sigma_{\alpha}^2>0$. By comparing \eqref{eq:mse_T_N} with \eqref{eq:mse_T_N_K}, if
\begin{align}\label{eq:K_condition1}
	K=o\big(N^{1-1/(2\tau_1)}\big),
\end{align}
then $K^{2\tau_1}N^{-2\tau_1}=o(N^{-1})$, and $\MSE(T_N)$ and $\MSE(T_{N,K})$ share the same leading order term $\sigma^2_{\alpha}N^{-1}$. 
We note that $1-1/(2\tau_1)$ is an increasing function of $\tau_1$, ranging from $1/2$ ($\tau_1=1$) to $1$ ($\tau_1\rightarrow\infty$). For the case of $\tau_1=1$ when $T_N$ has a bias of order $N^{-1}$, the number of blocks $K$ is required to be at a smaller order of $N^{1/2}$ such that $T_{N,K}$ maintains the same leading MSE as $T_N$. The increase in the variance due to the data blocking is reflected in the MSE, although it is of the second order in the non-degenerate case. 

If $T_N$ is degenerate ($\sigma_{\alpha}^2=0$ but $\sigma_{\beta}^2>0$), besides the increase in the bias, % of the distributed statistic $T_{N,K}$, 
an increase by a factor of $K$ occurs in the variance of $T_{N,K}$. This indicates that the distributed statistic can not maintain the same efficiency as $T_N$ in the degenerate case.

In summary, we see that the data blocking increases the bias and variance of the distributed statistic $T_{N,K}$. This may be viewed as a price paid to gain computational scalability for massive data. Despite this, there is room to select $K$ properly to minimize the increases. For instance, by choosing $K$ to satisfy \eqref{eq:K_condition1}, the increase in the bias and variance are  confined in the second order for the non-degenerate case. 
%The impact of the data blocking can be largely limited to the second order as %reflected in the MSE expression \eqref{eq:mse_T_N_K}.

\section{Computing issues and selection of $K$}\label{sec:select_K}

The rationale for carrying out the distributed formulation is in reducing time for computing and data communication in the context of massive data while maintaining certain level of estimation accuracy given a computation  budget.  
%In this section, we will discuss this issue in the context of  the U-statistics, %which can be readily extended to more general settings.\fn{But, the next %paragraph is more general than U-stats.}  
% as an example. % the degenerate U-statistics of order $U$.
%First, the distributed approach can reduce the computation significantly. 
% when the inferences involved is computationally intensive. For example,  
Suppose that the computational complexity of calculating $T_N$ is at the order of $O(N^{a})$ with $a>1$, then computing $T_{N,K}$ only needs $K\times O((N/K)^a)$, that is $O(K^{1-a}N^a)$ steps. So calculating $T_{N,K}$ is a factor of $O(K^{a-1})$ faster than computing $T_N$ directly. Similar statement can be made from Theorem 5 in \citet{ChenXie2014}. %In the distributed approach, the computing effort of the average step is only at the order of $K$, which is negligible.

In addition to saving computing cost, the distributed approach requires less memory space. Suppose the memory needed for computing $T_N$ is $O(N^b)$ for some $b\geq 1$. Then for $T_{N,K}$, the requirement on the size of memory is at the order of $O((N/K)^b+K)$. This means a memory saving by a factor of $1-K^{-b}$ when $K=O(N^{b/(1+b)})$, or $1-KN^{-b}$ when $KN^{-b/(1+b)}\to \infty$.

Now we use the U-statistics as an example to demonstrate the computational advantages of the distributed approach. Consider a U-statistic of degree $m \geq 2$ with a symmetric kernel function $h$ such that
\begin{align}\label{eq:U_stat_Hoeffding}
	\notag U_N = & {N\choose m}^{-1}\sum_{1\leq i_1<\ldots<i_m\leq N}h(X_{i_1},\ldots,X_{i_m}) \\
	= & \theta_U+N^{-1}\sum_{i=1}^{N}\alpha_U(X_i;F)+N^{-2}\sum_{1\leq i<j\leq N}\beta_U(X_i,X_j;F)+R_{UN},
\end{align}
where $\theta_U=\E(U_N)$, $\alpha_U(x; F)=m\left[\E\left\{h(x,X_2,\ldots,X_m)\right\}-\theta_U\right]$, 
$$ \beta_U(x,y;F) = (m-1)\left[m\E\left\{h(x,y,X_3,\ldots,X_m)\right\}-\alpha_U(x;F)-\alpha_U(y;F)-m\theta_U\right], $$
and $R_{UN}$ is the remainder term satisfying $\E(R_{UN})=0$ and $\Var(R_{UN})=O(N^{-3})$ under the condition that $\E\{h(X_1, \ldots, X_m)\}^2<\infty$. 
%It is clear that $U_N$ can be represented in the form of \eqref{eq:T_N}. 
The representation in \eqref{eq:U_stat_Hoeffding} is the Hoeffding's decomposition of U-statistics \citep{Hoeffding1948, Hoeffding1961}.

If  $s$ steps are needed to calculate the $h$ function, then the computing cost of $U_N$ is $(s+1){N\choose m}$, which is at the order of $O(N^{m})$. Assume that the entire dataset of size $N$ is divided evenly into $K$ subsets with each of size $n=[N/K]$. 
Denote $U_{N,K}^{(k)}$ as the U-statistic using data in the $k$-th block. 
%Then the distributed U-statistic is $U_{N,K}=K^{-1}\sum_{k=1}^{K}U_{N,K}^{(k)}$, which is called "aggregated U-statistics" in \citet{LinandXi2010}. 
Notice that the number of computing steps for $U_{N,K}$ is $K  (s+1){n\choose m}$, which is of order $O(K^{1-m}N^m)$. Thus, the computing steps of the distributed U-statistic is a factor $K^{m-1}$ less than that of the full sample based $U_N$. 

Now we consider the problem of selecting $K$. As discussed earlier, using a larger $K$  reduces the computing burden and makes computation more feasible. However, the mean square error of $T_{N,K}$ is an increasing function of $K$, which indicates 
%that there would be 
 a loss in the estimation accuracy for using a large $K$. Hence, in practice, the selection of $K$ should be chosen as a compromise between the statistical accuracy and the computational cost and feasibility.

%First of all, the storage and memory bottleneck require $K$ to be big enough such that the data points in each data block are manageable. 
Suppose that $K\geq K_0$ is a hard requirement to meet the memory and storage  capacity. As the mean square error of $T_{N,K}$ is increasing with respect  to $K$, so the minimum mean square error we can get is $\MSE(T_{N,K_0})$, which is achieved at $K=K_0$.
%This statement is based on the assumption that we have plenty enough time to fulfill all computing procedures. 
In practice, we may have a fixed time budget. In this situation, denote the computing cost as $C(K)$ and the fixed time budget as $C_0$.
\iffalse
then $K$ can be decided by the following optimization problem:
\begin{align}\label{eq:opt01}
	\notag 	\text{minimize}~~~~~~ & \MSE(T_{N,K}), \\
	\text{subject to}~~~~~ & K \geq K_0 \text{~~and~~} C(K)\leq C_0.
\end{align}
Here $C_0$ is an upper limit for the time budget.
\fi
It is reasonable to assume that the computing cost $C(K)$ is a decreasing function of $K$. Denote $K_0'=\min\{K|C(K)\leq C_0\}$, then the optimal $K$ is $\max(K_0,K_0')$. This suggests selecting the smallest $K$ that meet the limits of memory, storage and computation budget, to attain the best statistical efficiency possible.

\section{Asymptotic distribution of $T_{N,K}$}

We investigate the asymptotic distribution of $T_{N,K}$ under different assumptions on $\alpha$ and $R_N$, 
and compare it with that of $T_N$. 
%For the special case of non-degenerate U-statistics when $\sigma_{\alpha}^2>0$, $R_N$ satisfying $\E(R_N)=0$ and $\Var(R_N)=O(N^{-3})$, it has been studied in \citet{LinandXi2010}. 
First, we have the following theorem on the asymptotic distribution of $T_N$ when $\sigma_{\alpha}^2>0$.

\begin{theorem}\label{theo:T_N_asym_normal_dist}
	Under Condition \ref{condition:alpha} and $\sigma_{\alpha}^2>0$, if $R_N=o_p(N^{-1/2})$, then as $N\rightarrow\infty$,
	\begin{align*}
		N^{1/2}\sigma_{\alpha}^{-1}(T_N-\theta) \xrightarrow{d} \mathcal{N}(0,1).
	\end{align*}
\end{theorem}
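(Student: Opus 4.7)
The plan is to start from the basic decomposition \eqref{eq:T_N} and multiply through by $N^{1/2}$, giving
\[
N^{1/2}(T_N-\theta)=N^{-1/2}\sum_{i=1}^{N}\alpha(X_i;F)+N^{-3/2}\sum_{1\le i<j\le N}\beta(X_i,X_j;F)+N^{1/2}R_N.
\]
I would then treat the three summands separately and apply Slutsky's theorem to glue the results.

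First, for the linear term I would invoke the classical Lindeberg--L\'evy CLT. Under Condition \ref{condition:alpha} the random variables $\alpha(X_i;F)$ are i.i.d.\ with mean zero and variance $\sigma_\alpha^2\in(0,\infty)$, hence $N^{-1/2}\sum_{i=1}^{N}\alpha(X_i;F)\xrightarrow{d}\mathcal{N}(0,\sigma_\alpha^2)$. Multiplying by $\sigma_\alpha^{-1}$ later will convert this limit into the standard normal appearing in the statement.

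Second, for the quadratic term I would show it is $o_p(1)$ by a direct variance bound. Using the degenerate conditions $\E\{\beta(X_1,X_2;F)\mid X_1\}=0$ and the symmetry of $\beta$, the cross products $\E\{\beta(X_i,X_j;F)\beta(X_{i'},X_{j'};F)\}$ vanish whenever $\{i,j\}\ne\{i',j'\}$, so
\[
\Var\!\left(N^{-3/2}\sum_{1\le i<j\le N}\beta(X_i,X_j;F)\right)=N^{-3}\binom{N}{2}\sigma_\beta^2=O(N^{-1}),
\]
and Chebyshev's inequality gives that this term is $o_p(1)$. Third, the remainder term is trivially $o_p(1)$ because by hypothesis $R_N=o_p(N^{-1/2})$, hence $N^{1/2}R_N=o_p(1)$.

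Finally, Slutsky's theorem combines the three pieces to yield $N^{1/2}(T_N-\theta)\xrightarrow{d}\mathcal{N}(0,\sigma_\alpha^2)$, and scaling by $\sigma_\alpha^{-1}$ produces the asymptotic standard normal claimed. There is no real obstacle here; the only point requiring care is the variance calculation for the degenerate quadratic part, where one must use Condition \ref{condition:alpha} to kill the cross terms so that the $O(N^{-1})$ bound---rather than the trivial $O(1)$ bound one would get without degeneracy---goes through.
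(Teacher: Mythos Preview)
Your proof is correct. The paper does not supply a dedicated proof of this theorem---it is stated as a benchmark result and treated as standard---so there is no ``paper's proof'' to compare against line by line. Your decomposition into linear, quadratic, and remainder pieces, followed by CLT on the first, a Chebyshev/variance argument on the second, and the hypothesis on the third, assembled via Slutsky, is exactly the expected route.

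For context, the paper's proof of the stronger uniform result (Theorem~\ref{theo:uniform_convergence}) proceeds along the same decomposition but upgrades each step: Esseen's inequality (Lemma~\ref{lem:l01}) replaces the bare CLT for the linear part, and the quadratic and remainder terms are controlled by explicit probability bounds of the form $\P(|\Delta_N|\ge(\ln N)^{-1})=O(N^{-1}(\ln N)^2)$ rather than just $o_p(1)$, so that a Kolmogorov-distance conclusion can be drawn. Your argument is the natural $o_p(1)$-level version of that same scheme, and nothing more is needed for the convergence-in-distribution statement at hand.
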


%Here $\mathcal{N}(0,1)$ denotes the standard normal distribution. 
%This theorem is a consequence of the form of $T_N$ \eqref{eq:T_N} and the %central limit theorem. 
It is noted that the requirement that $R_N=o_p(N^{-1/2})$ can be met by many statistics and one sufficient assumption for it is $\E(R_N^2)=o(N^{-1})$.

When discussing the statistical efficiency of $T_{N,K}$ in Section \ref{sec:SACE}, the conditions on the first two moments of the remainder term $R_N$ are assumed in Theorem \ref{theo:mse1} in order to get a better insight into the MSE of $T_{N,K}$. However, for some statistics in the form of $T_N$, the orders of $R_N$'s first two moments can be hard to achieve. Instead, $R_N$ is known to be at a certain stochastic order. 
% (for example, in the form of Bahadur representation). 
Thus, we have an alternative assumption to Condition \ref{condition:R_N} on $R_N$ and $\{R_{N,K}^{(k)}\}_{k=1}^K$.

\begingroup
\def\thecondition{C3$'$}
\addtocounter{condition}{-1}
\begin{condition}\label{condition:R_N'}
	If $R_N=O_p(N^{-\tau_1})$ for some $\tau_1>1/2$, then $R_{N, K}^{(k)}=O_p(n_k^{-\tau_1})$ for $k=1,\ldots,K$.
\end{condition}
\endgroup

The following theorem concerns the asymptotic distribution of $T_{N,K}$.

\begin{theorem}\label{theo:T_N_K_asym_normal_dist}
	Under Conditions \ref{condition:alpha}, \ref{condition:K} and $\sigma_{\alpha}^2>0$, assume $K=o\left(N^{1-1/(2\tau_1)}\right)$, then
	
	(i) If $\E(R_N)=b_1N^{-\tau_1}+o(N^{-\tau_1})$ and $\Var(R_N)=o(N^{-\tau_2})$ for some $\tau_1\geq1$ and $\tau_2\geq1$, then under Condition \ref{condition:R_N}, as $N\rightarrow\infty$,
	%\begin{align*}
		$N^{1/2}\sigma_{\alpha}^{-1}(T_{N,K}-\theta) \xrightarrow{d} \mathcal{N}(0,1)$.
	
	(ii) If $R_N=O_p(N^{-\tau_1})$ for some $\tau_1>1/2$, then under Condition \ref{condition:R_N'}, as $N\rightarrow\infty$,
	$		N^{1/2}\sigma_{\alpha}^{-1}(T_{N,K}-\theta) \xrightarrow{d} \mathcal{N}(0,1)$.
\end{theorem}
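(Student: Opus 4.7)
The plan is to start from the representation \eqref{eq:T_N_K} and write
\[
T_{N,K}-\theta \;=\; L_N + Q_{N,K} + R_{N,K},
\]
where $L_N=N^{-1}\sum_{i=1}^{N}\alpha(X_i;F)$, $Q_{N,K}=N^{-1}\sum_{k=1}^{K}n_k^{-1}\sum_{1\le i<j\le n_k}\beta(X_{k,i},X_{k,j};F)$, and $R_{N,K}=N^{-1}\sum_{k=1}^{K}n_kR_{N,K}^{(k)}$. The strategy is to apply the classical Lindeberg--L\'evy CLT to $L_N$, then show $Q_{N,K}=o_p(N^{-1/2})$ and $R_{N,K}=o_p(N^{-1/2})$, and conclude via Slutsky's theorem. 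Note that $L_N$ is \emph{identical} in $T_N$ and $T_{N,K}$, so the linear leading term is unchanged by blocking and the proof reduces to controlling the remainder pieces that differ.

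For the linear term, Condition~\ref{condition:alpha} supplies iid summands with mean zero and variance $\sigma_\alpha^2\in(0,\infty)$, so $N^{1/2}\sigma_\alpha^{-1}L_N\xrightarrow{d}\mathcal{N}(0,1)$ immediately. For the quadratic term, the blocks $\mathfrak{X}_{N,K}^{(k)}$ are mutually independent and $\E\{\beta(X_1,X_2;F)\mid X_1\}=0$, hence the block contributions in $Q_{N,K}$ are uncorrelated and each is a degenerate $V$-type double sum with variance $\binom{n_k}{2}\sigma_\beta^2$. This gives $\Var(Q_{N,K})=O(KN^{-2})$; since $\tau_1\ge 1/2$ forces $K=o(N^{1-1/(2\tau_1)})=o(N)$, Chebyshev's inequality yields $Q_{N,K}=O_p(K^{1/2}N^{-1})=o_p(N^{-1/2})$.

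The heart of the argument is the remainder. In case (i), Theorem~\ref{theo:mse1} already furnishes $\E(R_{N,K})=O(K^{\tau_1}N^{-\tau_1})$, which is $o(N^{-1/2})$ precisely under the assumed rate $K=o(N^{1-1/(2\tau_1)})$. Independence of $R_{N,K}^{(k)}$ across $k$ and Condition~\ref{condition:R_N} give $\Var(R_{N,K})=N^{-2}\sum_{k=1}^{K}n_k^2\Var(R_{N,K}^{(k)})=o(N^{-2}\sum_k n_k^{2-\tau_2})$; since $\tau_2\ge 1$ and $\sum_k n_k=N$, this is $o(N^{-1})$. Combining, $\E R_{N,K}^2=o(N^{-1})$, and Chebyshev delivers $R_{N,K}=o_p(N^{-1/2})$, proving (i).

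For case (ii) under Condition~\ref{condition:R_N'}, I only have the per-block stochastic bound $R_{N,K}^{(k)}=O_p(n_k^{-\tau_1})$ with no moment control, so the approach is to exploit independence of the $R_{N,K}^{(k)}$ (they are measurable functions of disjoint blocks) together with a truncation argument: split
\[
R_{N,K}^{(k)} = R_{N,K}^{(k)}\mathbf{1}\{|R_{N,K}^{(k)}|\le M n_k^{-\tau_1}\} + R_{N,K}^{(k)}\mathbf{1}\{|R_{N,K}^{(k)}|> M n_k^{-\tau_1}\},
\]
control the truncated average by its variance (using independence and the deterministic bound $M n_k^{-\tau_1}$), and control the tail in probability by choosing $M$ large. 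The key inequality is that under $K=o(N^{1-1/(2\tau_1)})$ one has $n_k^{-\tau_1}\asymp (N/K)^{-\tau_1}=o(N^{-1/2})$, so the truncated aggregate $N^{-1}\sum_k n_k R_{N,K}^{(k)}\mathbf{1}\{\cdot\}$ is bounded by $M\cdot(N/K)^{-\tau_1}\cdot K\cdot(N/K)/N = M(N/K)^{-\tau_1}=o(N^{-1/2})$. The main obstacle is the tail piece: a naive union bound over blocks costs a factor $K$, which needs either uniform tightness of $\{n_k^{\tau_1}R_{N,K}^{(k)}\}$ (which does hold since the blocks are iid) or the slack built into the rate $K=o(N^{1-1/(2\tau_1)})$ relative to the sharp threshold. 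I expect that this iid-plus-slack combination, perhaps invoked through a Kolmogorov-type maximal inequality for independent summands, closes the gap; this is the step on which I would spend the most care.
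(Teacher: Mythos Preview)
Your decomposition and the treatment of $L_N$ and $Q_{N,K}$ coincide with the paper's approach; the paper in fact proves the stronger uniform statement (Theorem~\ref{theo:uniform_convergence}) via the same pieces, replacing the CLT on $L_N$ by a Berry--Esseen bound and using Chebyshev on the quadratic and remainder terms, so part~(i) is correct and matches the paper.

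For part~(ii) the paper gives no separate argument, and the difficulty you flag is real rather than merely technical. Your deterministic bound on the truncated aggregate is fine, but the tail piece cannot be closed from tightness and independence alone when $K\to\infty$: if $W_k:=n_k^{\tau_1}|R_{N,K}^{(k)}|$ are (for equal block sizes) i.i.d.\ and uniformly tight but with $\E W_k$ unbounded---which Condition~\ref{condition:R_N'} does not rule out---then $K^{-1}\sum_k W_k$ is not $O_p(1)$, and the slack factor $N^{1/2-\tau_1}K^{\tau_1}=o(1)$ can be overwhelmed. A Kolmogorov maximal inequality does not help here, since it too requires second moments. In all of the paper's motivating examples (U-statistics, smooth M-estimators) the $O_p(n_k^{-\tau_1})$ bound is accompanied by a moment bound of the same order, so one simply reruns the argument of~(i). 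A clean way to make (ii) rigorous is either to take $K$ bounded, where finitely many $O_p$ terms combine trivially, or to add a uniform first-moment assumption such as $\sup_k \E\big(n_k^{\tau_1}|R_{N,K}^{(k)}|\big)<\infty$, after which $\E|R_{N,K}|=O\big((K/N)^{\tau_1}\big)=o(N^{-1/2})$ and Markov's inequality finishes.
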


The condition $K=o\big(N^{1-1/(2\tau_1)}\big)$ indicates that, under Conditions \ref{condition:alpha}, \ref{condition:K}, \ref{condition:R_N} (or \ref{condition:R_N'}) and $\sigma_{\alpha}^2>0$, the smaller order $R_N$ is, the higher $K$ can be, so that $T_{N,K}$ can attain the same asymptotic normal distribution as $T_N$. Thus, the requirement on $K$ is determined by the order of $R_N$. It is noted that the above condition on $K$ coincides with the requirement on $K$ in \eqref{eq:K_condition1} in order that $\MSE(T_N)$ and $\MSE(T_{N,K})$ have the same leading order term. %, and $T_{N,K}$ has the same efficiency as $T_N$.

We need the following assumption on $\alpha$  to attain the uniform convergence of the distribution of $T_{N,K}$ to that of $T_N$, which is commonly used in obtaining the uniform convergence, see for instance in \citet{Petrov1998}.

\begin{condition}\label{condition:alpha2}
	There exists a positive constant $\delta\leq 1$ such that $\E|\alpha(X_1;F)|^{2+\delta}<\infty$.
\end{condition}

\begin{theorem}\label{theo:uniform_convergence}
	Under Conditions \ref{condition:alpha}, \ref{condition:K}, \ref{condition:R_N}, \ref{condition:alpha2} and $\sigma_{\alpha}^2>0$, assume that $\E(R_N)=b_1N^{-\tau_1}+o(N^{-\tau_1})$ and $\Var(R_N)=o(N^{-\tau_2})$ for some $\tau_1\geq 1$ and $\tau_2> 1$, and $K=O(N^{\tau'})$ for a positive constant $\tau'$ such that $\tau'<1-1/(2\tau_1)$, then as $N\rightarrow\infty$,
	\begin{align*}
		\sup\limits_{x \in \mathbf{R}}\left|\P\left\{N^{1/2}\sigma_{\alpha}^{-1}(T_{N, K}-\theta) \leq x\right\} - \P\left\{N^{1/2}\sigma_{\alpha}^{-1}(T_N-\theta) \leq x\right\}\right| = o(1).
	\end{align*}
\end{theorem}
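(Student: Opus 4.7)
The plan is to compare both CDFs to the common normal limit $\Phi$ via a Berry--Esseen-plus-smoothing argument and finish by triangle inequality. Writing $Z_N=N^{-1/2}\sigma_\alpha^{-1}\sum_{i=1}^N\alpha(X_i;F)$, I decompose
\begin{align*}
N^{1/2}\sigma_\alpha^{-1}(T_N-\theta)=Z_N+D_N,\qquad N^{1/2}\sigma_\alpha^{-1}(T_{N,K}-\theta)=Z_N+D_{N,K},
\end{align*}
where $D_N$ and $D_{N,K}$ collect the $N^{1/2}\sigma_\alpha^{-1}$-scaled $\beta$-quadratic pieces and the remainders from the two representations. Condition \ref{condition:alpha2} gives the classical Berry--Esseen bound $\sup_x|\P(Z_N\le x)-\Phi(x)|=O(N^{-\delta/2})\to 0$.

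The key technical step is establishing $D_N=o_p(1)$ and $D_{N,K}=o_p(1)$. For the $\beta$-pieces, the variance computations already used in Theorem \ref{theo:mse1} give $O_p(N^{-1/2})$ for the full-sample version and $O_p(\sqrt{K/N})$ for the aggregated version, both $o_p(1)$ under $K/N\to 0$. For $R_N$, the hypotheses $\E(R_N)=O(N^{-1})$ and $\Var(R_N)=o(N^{-1})$ (the latter using $\tau_2>1$) yield $N^{1/2}R_N=o_p(1)$ by Chebyshev. For $R_{N,K}$, Condition \ref{condition:R_N} with $n_k\asymp N/K$ gives $\E(R_{N,K})=O(K^{\tau_1}N^{-\tau_1})=O(N^{-\tau_1(1-\tau')})$; the assumption $\tau'<1-1/(2\tau_1)$ makes $\tau_1(1-\tau')>1/2$, so the scaled bias $N^{1/2}\E(R_{N,K})=o(1)$. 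Block independence and Condition \ref{condition:R_N} give $\Var(R_{N,K})=N^{-2}\sum_k n_k^2\Var(R_{N,K}^{(k)})=o((K/N)^{\tau_2-1}N^{-1})=o(N^{-1})$, so Chebyshev on the centered part yields $N^{1/2}[R_{N,K}-\E(R_{N,K})]=o_p(1)$, and hence $N^{1/2}R_{N,K}=o_p(1)$.

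With $D_{N,K}=o_p(1)$, a standard $\epsilon$-sandwich gives, for any $\epsilon>0$,
\begin{align*}
\P(Z_N+D_{N,K}\le x)\le\P(Z_N\le x+\epsilon)+\P(|D_{N,K}|>\epsilon)\le\Phi(x)+\epsilon/\sqrt{2\pi}+O(N^{-\delta/2})+o(1),
\end{align*}
using the Lipschitz constant $(2\pi)^{-1/2}$ of $\Phi$, with a matching lower bound. Choosing $\epsilon=\epsilon_N\downarrow 0$ slowly enough that $\P(|D_{N,K}|>\epsilon_N)\to 0$ yields $\sup_x|\P(Z_N+D_{N,K}\le x)-\Phi(x)|=o(1)$; the identical argument for $Z_N+D_N$ gives the analogous statement for $T_N$, and the triangle inequality finishes the proof. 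The main obstacle is the bookkeeping on $R_{N,K}$---in particular, pushing the $K$-inflated bias below $N^{-1/2}$ after scaling---which is exactly what the strengthened hypothesis $\tau'<1-1/(2\tau_1)$ is designed to secure; the slight strengthening $\tau_2>1$ (versus $\ge 1$ elsewhere) is used in the same step to make the centered-remainder variance $o(N^{-1})$ rather than merely $O(N^{-1})$.
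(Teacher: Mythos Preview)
Your proposal is correct and follows essentially the same route as the paper's proof: Berry--Esseen on the linear part $Z_N$, Chebyshev/variance bounds to show the scaled $\beta$-terms and remainders are $o_p(1)$ (the paper uses the explicit threshold $(\ln N)^{-1}$ where you use a generic $\epsilon_N\downarrow 0$), and a smoothing/sandwich step to pass from $Z_N$ to $Z_N+D_N$ and $Z_N+D_{N,K}$ uniformly, followed by the triangle inequality through $\Phi$. Your bookkeeping on $\E(R_{N,K})$ and $\Var(R_{N,K})$ matches the paper's, and your observation that $\tau'<1-1/(2\tau_1)$ is precisely what forces $N^{1/2}\E(R_{N,K})\to 0$ is exactly the point.
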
 

It is noted that the uniform convergence requires $K=O(N^{\tau'})$ for $0<\tau'<1-1/(2\tau_1)$, which is slightly stronger than $K=o\big(N^{1-1/(2\tau_1)}\big)$ assumed in Theorem \ref{theo:T_N_K_asym_normal_dist}.

To sum up the results for the case of $\sigma_{\alpha}^2>0$, we note that as long as the number of data blocks $K$ does not diverges too fast, $T_{N,K}$ can maintain the same estimation efficiency as $T_N$ and  the same asymptotic normal distribution. Thus, $T_{N,K}$ is a good substitute of $T_N$ as an estimator of $\theta$ for the non-degenerate case. 

For the degenerate case of $\sigma_{\alpha}^2=0$, an increase by a factor of $K$ in the variance has been shown in Theorem \ref{theo:mse1}. This means that $T_{N,K}$ can not achieve the same efficiency as $T_N$. 
%We want to explore the asymptotic distribution of $T_{N,K}$ in the degenerate case and compare it to that of $T_N$. 
It is noticed that when $\sigma_{\alpha}^2 =0$ and $\sigma^2_{\beta}>0$, 
\begin{align}\label{eq:T_N_K_degenerate}
	\notag T_N-\theta & = N^{-2}\sum_{1\leq i<j\leq N}\beta(X_i,X_j;F)+R_N ~~~~~~\text{and} \\
	T_{N,K}-\theta & = N^{-1}\sum_{k=1}^Kn_k^{-1}\sum_{1\leq i<j\leq n_k}\beta(X_{k,i},X_{k,j};F)+R_{N, K}.
\end{align}
%Thus, the leading order term of $T_{N,K}-\theta $ is no longer the same as that of $T_N-\theta$. %, which leads to an order of $K$ increase in the variance as shown in Theorem \ref{theo:mse1}. 
Before studying the asymptotic distributions of $T_N$ and $T_{N,K}$, we introduce notations used in the spectral decomposition of $\beta$. As $\beta$ is symmetric in its two arguments, when it has finite second moment, there exist a sequence of eigenvalues $\{\lambda_{\ell}\}_{{\ell}=1}^{\infty}$ and  eigenfunctions $\{\beta_{\ell}\}_{{\ell}=1}^{\infty}$ %in connection with $\beta$ 
such that $\beta$ admits expansion \citep{Dunford1963, Serfling1980}:
\begin{align}\label{eq:beta_expan}
	\beta(x,y;F) = \sum_{{\ell}=1}^{\infty}\lambda_{\ell}\beta_{\ell}(x;F)\beta_{\ell}(y;F)
\end{align}
in the sense that $\lim\limits_{L\rightarrow\infty}\E\big\{\big|\beta(X_1,X_2;F)-\sum_{{\ell}=1}^{L}\lambda_{\ell}\beta_{\ell}(X_1;F)\beta_{\ell}(X_2;F)\big|^2\big\}=0$.
%\begin{align*}
%	\lim\limits_{L\rightarrow\infty}\E\left\{\left|\beta(X_1,X_2;F)-\sum_{{\ell}=1}^{L}\lambda_{\ell}\beta_{\ell}(X_1;F)\beta_{\ell}(X_2;F)\right|^2\right\}=0.
%\end{align*}
Here $\{\beta_{\ell}\}_{\ell=1}^{\infty}$ satisfy   $\sum_{{\ell}=1}^{\infty}\lambda_{\ell}^2=\sigma_{\beta}^2<\infty$,  $\E\left\{\beta_{\ell}(X_1;F)\right\}=0$ for all ${\ell}\in\textbf{Z}$ and 
$$ \E\left\{\beta_{{\ell}_1}(X_1; F)\beta_{{\ell}_2}(X_1; F)\right\}=\left\{ \begin{array}{rcl}
1 & \ \  {\ell}_1={\ell}_2, \\ 
0 &  \ \ {\ell}_1\neq {\ell}_2. 
\end{array}\right. $$

%The following theorem establishes the asymptotic distribution of $T_N$ under the degenerate case.

\begin{theorem}\label{theo:T_N_degenerate}
	Under Condition \ref{condition:alpha}, $\sigma_{\alpha}^2=0$ and $\sigma_{\beta}^2>0$, if $R_N=o_p(N^{-1})$, then as $N\rightarrow\infty$,
	%\begin{align*}
		$ 2N(T_N-\theta) \xrightarrow{d} \sum_{{\ell}=1}^{\infty}\lambda_{\ell}(\chi_{1{\ell}}^2-1),$ 
	%\end{align*}
	where $\{\chi_{1\ell}^2\}_{\ell=1}^{\infty}$ are independent $\chi_1^2$ random variables.
\end{theorem}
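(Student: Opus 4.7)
The plan is to leverage the spectral expansion \eqref{eq:beta_expan} to reduce the statement to a finite-dimensional central limit problem, then pass to the limit by controlling the $L^2$ tail of the eigenvalue series.

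First I would strip off the remainder: since by assumption $R_N = o_p(N^{-1})$, we have $2 N R_N = o_p(1)$, so by Slutsky it suffices to show
\begin{equation*}
S_N := \frac{2}{N}\sum_{1 \le i < j \le N} \beta(X_i,X_j;F) \xrightarrow{d} \sum_{\ell=1}^{\infty}\lambda_\ell(\chi_{1\ell}^2 - 1).
\end{equation*}
Fix a truncation level $L \ge 1$ and set $\beta^{(L)}(x,y) = \sum_{\ell=1}^L \lambda_\ell \beta_\ell(x;F)\beta_\ell(y;F)$, with residual $\beta^{(>L)} = \beta - \beta^{(L)}$. Correspondingly split $S_N = S_N^{(L)} + S_N^{(>L)}$.

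For the truncated part, the key algebraic identity is
\begin{equation*}
S_N^{(L)} = \sum_{\ell=1}^{L}\lambda_\ell\!\left\{\frac{1}{N}\Bigl(\sum_{i=1}^{N}\beta_\ell(X_i;F)\Bigr)^{\!2} - \frac{1}{N}\sum_{i=1}^{N}\beta_\ell^2(X_i;F)\right\}.
\end{equation*}
The orthonormality stated just after \eqref{eq:beta_expan} implies that $\beta_1(X_1;F),\ldots,\beta_L(X_1;F)$ have mean zero and identity covariance, so the multivariate CLT gives $N^{-1/2}\bigl(\sum_i\beta_\ell(X_i;F)\bigr)_{\ell\le L} \xrightarrow{d} (Z_1,\ldots,Z_L)$, where the $Z_\ell$ are i.i.d.\ $\mathcal{N}(0,1)$. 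Meanwhile, the weak law of large numbers yields $N^{-1}\sum_i\beta_\ell^2(X_i;F) \xrightarrow{p} 1$ for each $\ell \le L$. Combining these via the continuous mapping theorem gives
\begin{equation*}
S_N^{(L)} \xrightarrow{d} \sum_{\ell=1}^{L}\lambda_\ell(Z_\ell^2 - 1) = \sum_{\ell=1}^{L}\lambda_\ell(\chi_{1\ell}^2 - 1).
\end{equation*}

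For the tail $S_N^{(>L)}$, I would use orthogonality together with Condition \ref{condition:alpha} (which gives $\E\{\beta^{(>L)}(X_1,X_2;F)\mid X_1\} = 0$, hence the residual kernel is still completely degenerate of order two). A direct variance computation then gives
\begin{equation*}
\Var(S_N^{(>L)}) = \frac{2(N-1)}{N}\,\E\{\beta^{(>L)}(X_1,X_2;F)^2\} \le 2\sum_{\ell>L}\lambda_\ell^2,
\end{equation*}
which tends to zero as $L\to\infty$ uniformly in $N$ because $\sum_\ell \lambda_\ell^2 = \sigma_\beta^2<\infty$. Combined with $\sum_{\ell=1}^{\infty}\lambda_\ell(\chi_{1\ell}^2-1)$ being the $L^2$-limit of its partial sums (again by $\sum_\ell \lambda_\ell^2 <\infty$), the standard ``double array'' convergence lemma (e.g.\ Theorem 3.2 of Billingsley) converts the joint statements into $S_N \xrightarrow{d} \sum_\ell \lambda_\ell(\chi_{1\ell}^2-1)$.

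The main obstacle I anticipate is justifying the tail bound on $S_N^{(>L)}$ cleanly: one must verify that the residual kernel inherits both symmetry and the degeneracy condition $\E\{\beta^{(>L)}(X_1,X_2;F)\mid X_1\}=0$ from $\beta$, so that the cross terms in $\Var(S_N^{(>L)})$ vanish and only the diagonal $\E\{(\beta^{(>L)})^2\}$ term remains. Once this is in place, the uniform-in-$N$ control $\sum_{\ell>L}\lambda_\ell^2 \to 0$ makes the limit interchange routine.
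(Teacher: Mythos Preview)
Your proof is correct and follows the classical argument for degenerate U-statistics. The paper itself does not give a standalone proof of this theorem; it is stated as background, and in the proof of Theorem \ref{theo:T_N_K_degenerate}(i) the authors simply invoke ``the classic limit theorem for degenerate U-statistics \citep{Serfling1980}.'' What you have written is precisely that classical proof: spectral truncation of $\beta$, multivariate CLT plus continuous mapping on the finite-rank piece, a uniform-in-$N$ second-moment bound on the tail via $\sum_{\ell>L}\lambda_\ell^2\to 0$, and Billingsley's triangular-array lemma to glue the pieces together. Your anticipated obstacle is not a real one: degeneracy of $\beta^{(>L)}$ is immediate because both $\beta$ (by Condition \ref{condition:alpha}) and $\beta^{(L)}$ (since each $\beta_\ell$ has mean zero) satisfy $\E\{\cdot\mid X_1\}=0$, so their difference does too. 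One small omission worth making explicit at the very start: you silently drop the linear term $2\sum_i\alpha(X_i;F)$ from $2N(T_N-\theta)$; this is justified because $\sigma_\alpha^2=0$ together with $\E\alpha(X_1;F)=0$ forces $\alpha(X_1;F)=0$ a.s., which the paper records in \eqref{eq:T_N_K_degenerate}.
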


%This theorem is a direct result of the asymptotic distribution of degenerate U-statistics \citep{Serfling1980} and the condition that $R_N=o_p(N^{-1})$.

From \eqref{eq:T_N_K_degenerate}, $T_{N,K}$ can be viewed as a weighted average of $K$ independent random variables, the asymptotic behavior of degenerate $T_{N, K}$ is given in the following theorem.

\begin{theorem}\label{theo:T_N_K_degenerate}
	Under Conditions \ref{condition:alpha}, \ref{condition:K}, $\sigma_{\alpha}^2=0$ and $\sigma_{\beta}^2>0$, then
	
	(i) if $K$ is finite and $R_{N,K}=o_p(N^{-1})$, then 
	%\begin{align*}
		$2N(T_{N,K}-\theta) \xrightarrow{d} \sum_{{\ell}=1}^{\infty}\lambda_{\ell}(\chi_{K{\ell}}^2-K)$ as $N\rightarrow\infty$, 
	%\end{align*}
	where $\{\chi_{K\ell}^2\}_{\ell=1}^{\infty}$ are independent $\chi_K^2$  random variables;
	
	(ii) if $K\rightarrow\infty$, there exists a constant $\delta'>0$ such that $\E\left|\beta(X_1,X_2;F)\right|^{2+\delta'}<\infty$, and $R_{N, K}=o_p(K^{1/2}N^{-1})$, then 
	%\begin{align*}
		$2^{1/2}K^{-1/2}N\sigma_{\beta}^{-1}(T_{N,K}-\theta) \xrightarrow{d} \mathcal{N}(0,1)$ as $N\rightarrow\infty$.
	%\end{align*}	
\end{theorem}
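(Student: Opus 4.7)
Since $\E\alpha(X_1;F)=0$ together with $\Var\alpha(X_1;F)=\sigma_\alpha^2=0$ forces $\alpha(X_1;F)\equiv 0$ almost surely, the representation \eqref{eq:T_N_K_degenerate} collapses to
\[
2N(T_{N,K}-\theta) = \sum_{k=1}^{K} Z_k + 2NR_{N,K},\qquad Z_k := \frac{2}{n_k}\sum_{1\leq i<j\leq n_k}\beta(X_{k,i},X_{k,j};F).
\]
Because the blocks are disjoint i.i.d.\ samples, $\{Z_k\}_{k=1}^K$ are independent; the degeneracy condition on $\beta$ in Condition \ref{condition:alpha} gives $\E Z_k=0$ and $\Var(Z_k)=4n_k^{-2}\binom{n_k}{2}\sigma_\beta^2=2\sigma_\beta^2(1-n_k^{-1})$. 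Both parts therefore reduce to limit theorems for $\sum_k Z_k$, with the remainder $2NR_{N,K}$ eliminated by Slutsky under the stated hypothesis on $R_{N,K}$.

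For part (i), fix $K$ so that $n_k\to\infty$ by Condition \ref{condition:K}. Applying Theorem \ref{theo:T_N_degenerate} to the block data with a zero-remainder statistic yields
\[
Z_k \xrightarrow{d} \sum_{\ell=1}^{\infty}\lambda_\ell(\chi_{1\ell,k}^2-1),\qquad k=1,\ldots,K,
\]
where $\{\chi_{1\ell,k}^2\}_{\ell\geq 1}$ are i.i.d.\ $\chi_1^2$ for each $k$. Independence of the $K$ blocks promotes these marginals to joint convergence in which the $\chi_{1\ell,k}^2$ are mutually independent across both indices, and the continuous mapping theorem gives
\[
\sum_{k=1}^{K} Z_k \xrightarrow{d} \sum_{\ell=1}^{\infty}\lambda_\ell\Bigl(\sum_{k=1}^{K}\chi_{1\ell,k}^2-K\Bigr) = \sum_{\ell=1}^{\infty}\lambda_\ell(\chi_{K\ell}^2-K),
\]
using additivity of independent $\chi_1^2$ variables in $k$ and independence across $\ell$. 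Combining with $2NR_{N,K}=o_p(1)$ from $R_{N,K}=o_p(N^{-1})$ via Slutsky finishes part (i).

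For part (ii) with $K\to\infty$, the plan is to apply the Lyapunov central limit theorem to the independent triangular array $\{Z_k\}_{k=1}^{K}$. The variance normalizer is
\[
s_K^2 := \sum_{k=1}^{K}\Var(Z_k) = 2\sigma_\beta^2\sum_{k=1}^{K}(1-n_k^{-1}) = 2K\sigma_\beta^2\bigl(1+o(1)\bigr),
\]
since Condition \ref{condition:K} implies $\sum_k n_k^{-1}=O(K^2/N)=o(K)$. The crux is the uniform moment bound $\E|Z_k|^{2+\delta'}=O(1)$ under $\E|\beta(X_1,X_2;F)|^{2+\delta'}<\infty$; this follows from a Rosenthal-type moment inequality for degenerate U-statistics of order two (one concrete fallback: truncate the spectral expansion \eqref{eq:beta_expan} at some level $L$, control the $L^2$ truncation error, and apply the classical Rosenthal inequality to the i.i.d.\ mean-zero truncated sum before letting $L$ grow), and is the main technical obstacle I anticipate. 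Granting it, the Lyapunov ratio is $\sum_k \E|Z_k|^{2+\delta'}/s_K^{2+\delta'}=O(K)/O(K^{1+\delta'/2})=O(K^{-\delta'/2})\to 0$, so $s_K^{-1}\sum_k Z_k \xrightarrow{d} \mathcal{N}(0,1)$. Substituting $\sum_k Z_k=2N(T_{N,K}-\theta)-2NR_{N,K}$ and noting $s_K^{-1}\cdot 2NR_{N,K}=o_p(1)$ (from $R_{N,K}=o_p(K^{1/2}N^{-1})$ and $s_K\asymp K^{1/2}$), Slutsky converts the CLT into the claimed limit $2^{1/2}K^{-1/2}N\sigma_\beta^{-1}(T_{N,K}-\theta)\xrightarrow{d}\mathcal{N}(0,1)$.
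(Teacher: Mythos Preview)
Your proposal is correct and follows essentially the same approach as the paper. For part (i) the paper likewise invokes block independence together with the classical limit theorem for degenerate U-statistics, and for part (ii) the paper verifies the Lindeberg condition via the Markov-inequality bound $\E[|Z_k|^2\mathbf{1}\{|Z_k|>\varepsilon s_K\}]\leq (\varepsilon s_K)^{-\delta'}\E|Z_k|^{2+\delta'}$, which is exactly your Lyapunov route in disguise. The one place you are working harder than necessary is the moment bound $\E|Z_k|^{2+\delta'}=O(1)$: rather than truncating the spectral expansion, the paper simply appeals to the standard moment inequalities for degenerate U-statistics (e.g.\ Koroljuk and Borovskich, \emph{Theory of U-statistics}, 1994), which give this bound directly under $\E|\beta(X_1,X_2;F)|^{2+\delta'}<\infty$.
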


Here, we get two different limiting distributions for $T_{N,K}$ depending on whether $K$ is finite or diverging. This is easy to interpret as $T_{N,K}$ is an average of $K$ independent random variables.  Thus, with the data divided into increasingly many data blocks, the distribution of $T_{N, K}$ is asymptotically normal.  When $K$ is finite, the condition $R_{N, K}=o_p(N^{-1})$ is a directly result of $R_{N,K}^{(k)}=o_p(n_k^{-1})$, $k=1,\dots,K$. When $K$ diverges to infinity, if $\E(R_N)=b_1N^{-\tau_1}+o(N^{-\tau_1})$ for some $b_1\neq0$, $\tau_1\geq1$ and $\Var(R_N)=o(N^{-2})$, then under Condition \ref{condition:R_N} which prescribes the divisibility of $R_N$ with respect to the sample size $N$, $\E(R_{N,K})=N^{-1}\sum_{k=1}^{K}b_{1,k}n_k^{1-\tau_1}+o(K^{\tau_1}N^{-\tau_1})$ and $\Var(R_{N,K})=o(KN^{-2})$. Thus, in order for $R_{N, K}=o_p(K^{1/2}N^{-1})$, it requires $K^{\tau_1}N^{-\tau_1}=o(K^{1/2}N^{-1})$ that is 
$ K=o\big(N^{1-1/(2\tau_1-1)}\big). $
This is a slower growth rate for $K$ comparing to the non-degenerate case. If $\tau_1=1$ with $b_1\neq0$, $\E(R_{N,K})=O(KN^{-1})$ which means that $R_{N, K}=o_p(K^{1/2}N^{-1})$ is not achievable in this case. Similarly, if $R_N=O_p(N^{-\tau_1})$ for some $\tau_1>1$, then under Condition \ref{condition:R_N'}, it is necessary $K=o\left(N^{1-1/(2\tau_1-1)}\right)$ such that $R_{N, K}=o_p(K^{1/2}N^{-1})$ is satisfied.

It is of interest to mention that when $K\to \infty$, the normalizing factor is of order $K^{-1/2}N$, which is of smaller order of $N$, the normalizing factor for $T_N$. This 
%indicates that the distributed statistic $T_{N,K}$ is less efficient than $T_N$, and this property 
reflects the variance increase of $T_{N,K}$ as revealed in Theorem \ref{theo:mse1}.

\section{Bootstrap procedures}

%We have studied the asymptotic properties of $T_{N,K}$ by focusing on its mean square error and the asymptotic distributions in the previous sections.   
An important remaining issue is how to approximate the distribution of $T_{N,K}$ under different conditions on $\sigma^2_{\alpha}$ and $K$.  
 This motivates our consideration of the bootstrap method, which has been a powerful tool for statistical analyzes, especially in approximating distributions, % of statistics, 
see \citet{Efron1979,Hall1992,Shao1995}.  
In this section, we will first investigate the properties of the conventional bootstrap when applied to the distributed statistics, followed by reviews %two existing bootstrap algorithms designed for massive data, 
of the bag of little bootstrap (BLB) and the subsampled double bootstrap (SDB). Then, we propose two distributed bootstrap algorithms which are particularly designed for 
% over other methods when applied to 
the distributed statistics %$T_{N,K}$ 
 under the massive data scenario.

%\subsection{Entire sample bootstrap}
%\subsection{Conventional bootstrap}

We first outline the entire sample bootstrap for the distributed statistics to offer a benchmark for our discussions in this section, despite the method is not computationally feasible for massive data.
To approximate the distribution of $T_{N,K}$, %we consider resampling the entire sample to construct bootstrap resamples by following the procedures of the conventional bootstrap. By calculating the corresponding distributed statistic for each resample, we can use the empirical distribution of the bootstrap distributed statistics to estimate the distribution of $T_{N,K}$.
%In order to approximate the distribution of $T_{N,K}$, conventional bootstrap can be carried out by random data blocking on the entire sample. by recognizing the distributed character of $T_{N,K}$. By calculating the corresponding distributed statistic for each resampled data blocks, we can use the empirical distribution of the bootstrap distributed statistics to estimate the distribution of $T_{N,K}$.
this version of the bootstrap generates resampled data blocks from the entire sample for constructing resampled distributed statistics. 
%As $T_{N, K}$ is a distributed type statistic and the sample size $ N $ is large, 
By recognizing the distributed character of $T_{N,K}$, %in order to construct resamples of size $N$ and get the bootstrap version of $T_{N, K}$, 
the entire-sample bootstrap has the following resampling procedure.

\textbf{Step 1}. For $k = 1, \ldots, K$, randomly sample a data subset $\mathfrak{X}_{N,K}^{*(k)}=\big\{X^*_{k,1}, \ldots, X^*_{k,n_k}\big\}$ from $\mathfrak{X}_N$ with replacement and compute the corresponding statistic $T^{*(k)}_{N, K}=T\big(\mathfrak{X}_{N,K}^{*(k)}\big)$.

\textbf{Step 2}. Obtain the distributed statistic on the resampled data subsets $\mathfrak{X}_N^{*}=:\big\{\mathfrak{X}_{N,K}^{*(k)}\big\}_{k=1}^K$
\begin{align*}
	T^*_{N, K} = N^{-1}\sum_{k=1}^{K}n_kT^{*(k)}_{N, K}.
\end{align*}

This two-step %resampling 
procedure ensures that the resampled subsets $\mathfrak{X}_N^{*}$ are conditionally independently and identically distributed from $F_N$, the empirical distribution of $\mathfrak{X}_N$. % and they are split into $K$ subsets automatically. % So the storage space requirement of the resampling procedure is scale in $K^{-1}N$, rather than $N$ and it is easy to be paralleled.
Repeat Steps 1 to 2 $B$ times,  $B$ bootstrap distributed statistics, denoted as $T_{N, K}^{*1}, \ldots, T_{N, K}^{*B}$, are obtained. The empirical distribution of $\big\{N^{1/2}\big(T_{N, K}^{*b}-\hat{\theta}_N\big)\big\}_{b=1}^B$ is used to approximate that of $N^{1/2}(T_{N, K}-\theta)$, where  $\hat{\theta}_N=\theta(F_N)$ is the analog of $\theta$ under $F_N$.

The drawback of the entire-sample bootstrap is in requiring a rather large data communication expenditure should the data are stored in multiple locations. 
%by resampling from the whole dataset $\mathfrak{X}_N$. Moreover, the whole data may be stored in different locations, which makes the bootstrap require data communication among different data clusters at different locations. 
Furthermore, $\hat{\theta}_N=\theta(F_N)$ has the same computational complexity as $T_N$. %For instance, for U-statistics discussed in the supplementary material, $\theta_{UN}^*=N^{-2}\sum_{i=1}^{N}\sum_{j=1}^{N}h(X_i, X_j)$ which is computationally expensive for massive data.
Thus, we do not pursue this version of the bootstrap algorithm further in this paper. 
%\begin{remark}
	%Another approach to resample without replacement in step 1, then the procedure is equivalent to random data blocking on the entire sample. This approach also suffers from the limitations mentioned above.
	
%\end{remark}

\def\fn{\footnote}

\subsection{BLB and SDB}

The bag of little bootstrap (BLB) \citep{Kleiner2014} and the subsampled double bootstrap (SDB) \citep{Sengupta2015} are two existing resampling-based methods for massive data inference. 
% with emphasis on accessing the quality of estimators. 
Suppose $\hat{\theta}_N=\hat{\theta}_N(F_N)$ is an estimator of $\theta=\theta(F)$ based on the sample $\mathfrak{X}_N=\{X_1,\ldots,X_N\}$. Let $u(\hat{\theta}_N,\theta)$ be an inferential quantity concerning $\hat{\theta}_N$, and $Q_N(F)$ be its sampling distribution, which is unknown due to its dependence on the underlying distribution $F$. One is interested in a quantity $\xi\left\{Q_N(F)\right\}$ %of $u(\theta_N,\theta)$, regarding 
on certain aspect of $Q_N(F)$. For example, if $u(\hat{\theta}_N, \theta)=\hat{\theta}_N-\theta$, % is the estimation error of $\hat{\theta}_N$, 
$\xi\left\{Q_N(F)\right\}$ may be the expectation of $u(\hat{\theta}_N, \theta)$ representing the bias of $\hat{\theta}_N$. Similarly, $\xi\left\{Q_N(F)\right\}$ can be the mean square error of $\hat{\theta}_N$ or a confidence interval of $\theta$.

\iffalse 
%Resampling methods is known to be able to estimate $\xi\left\{Q_N(F)\right\}$.\footnote{I suggest we drop this paragraph.} %The basic idea of the bootstrap is, first estimate the unknown sampling distribution $Q_N(F)$ by an empirical distribution, which can be derived by resampling the observed data, then estimate $\xi\left\{Q_N(F)\right\}$ by plug in the empirical distribution as an estimator of $Q_N(F)$. 
For the conventional bootstrap, by resampling on the entire data, we can obtain %a number ($B$) of 
$B$ resamples, denoted as $\mathfrak{X}_N^{*(b)}=\{X_1^{*(b)},\ldots,X_N^{*(b)}\}$, $b=1,\ldots,B$. For each resample, we calculate $u(\hat{\theta}_N^{*(b)},\hat{\theta}_N)$ where $\hat{\theta}_N^{*(b)}=\hat{\theta}_N\big(F_N^{*(b)}\big)$ with $F_N^{*(b)}$ being the empirical distribution of $\mathfrak{X}_N^{*(b)}$. Then,  the empirical distribution of $\{u(\hat{\theta}_N^{*(b)},\hat{\theta}_N)\}_{b=1}^B$, denoted as $Q_N^{*}$, can be used to estimate $Q_N(F)$. Finally, $\xi\{Q_N^{*}\}$ is used to estimate $\xi\left\{Q_N(F)\right\}$ by the plug-in principle.  
\fi 

For massive data, calculating $\hat{\theta}_N$ and its resampled version $\hat{\theta}_N^{*(b)}$ with the entire-sample bootstrap can be computationally expensive. \citet{Kleiner2014} proposed %the bags of little bootstrap (BLB)  
BLB to obtain computationally ``cheaper" estimates of $\xi\left\{Q_N(F)\right\}$. % for the massive data setting. 
To avoid calculating the full sample %the estimators 
$\hat{\theta}_N$ and $\hat{\theta}_N^{*(b)}$,  BLB first  generates data subsets of smaller sizes,  %from the entire data. In the BLB, 
say $S$ subsets of size $n$ (often $n=N^\varsigma$ for some $0<\varsigma<1$) by sampling  from the original dataset $\mathfrak{X}_N$ without replacement (these subsets can be predefined disjoint subsets of $\mathfrak{X}_N$).  
Denote the $s$-th data subset as $\mathfrak{X}_{s, n}= \{X_{s,1}, \ldots, X_{s,n}\}$,  
 its empirical distribution as $F_{s, n}$,  and $\hat{\theta}_{s,n}=\hat{\theta}_n(F_{s, n})$ as the estimator of $\theta$, which is relatively cheap to compute due to its smaller size. 
%  for $s=1,\ldots,S$. Then,  the corresponding estimator $\hat{\theta}_{s,n}=\hat{\theta}_n(F_{s, n})$ %for each subset 
%is .  {\bf Because in the }  

The second step of BLB is in constructing inflated resample $\mathfrak{X}_{s,N}^{*(b)}$ of full size $N$  by repeated sampling with replacement from the smaller data subset $\mathfrak{X}_{s, n}$. The inflated resample $\mathfrak{X}_{s,N}^{*(b)}$ can be 
% \cite{Kleiner2014} proposed an computationally 
efficiently produced via generating resampling weights for elements of $\mathfrak{X}_{s, n}$  from the multinomial distribution $\text{Multinomial}(N, n^{-1}\mathbf{1}_{n})$ as elaborated in  \citet{Kleiner2014}. 

Let $F_{s, N}^{*(b)}$ %\fn{Shall we write it as $F_{s, N}^{*(b)}$ as it is the EDF of the inflated sample of size $N$ ? of course, we need to respect the notation used in the BLB paper.   \color{red}{Yes, $N$ is more appropriate. Corrected.}}  
be the empirical distribution of the inflated resample $\mathfrak{X}_{s,N}^{*(b)}$. %$\big(\mathfrak{X}_{s,n},\mathfrak{W}_{s,n}^{*(b)}\big)$. 
BLB  calculates the estimator $\hat{\theta}_{s,N}^{*(b)}=\hat{\theta}_N\big(F_{s, N}^{*(b)}\big)$ %\fn{write as $\hat{\theta}_{s,N}^{*(b)}$?} 
and $u\big(\hat{\theta}_{s,N}^{*(b)},\hat{\theta}_{s,n}\big)$ for B inflated  resamples of $\mathfrak{X}_{s,n}$, namely for $b=1, \cdots, B$.  
Then, the empirical distribution of $\big\{u\big(\hat{\theta}_{s,N}^{*(b)},\hat{\theta}_{s,n}\big)\big\}_{b=1}^B$, denoted as $Q^*_N\big(F_{s,n}\big)$, %\fn{write as $Q^*_N\big(F_{s,N}\big)$ ?}   
is used to estimate $Q_N(F_{s,n})$ and to obtain $\xi\big\{Q^*_N\left(F_{s, n}\right)\big\}$ for the $s$-th data subset $\mathfrak{X}_{s, n}$. 
Finally, the BLB estimator of $\xi\left\{Q_N(F)\right\}$ is %obtained by averaging $ \xi\big\{Q^*_N\left(F_{s, n}\right)\big\} $ over $S$ data subsets, that is 
 $S^{-1}\sum_{s=1}^{S}\xi\left\{Q^*_N\left(F_{s, n}\right)\right\}$.  
% A diagram summary of the BLB procedure is shown in Figure \ref{fig:BLB}. 
\citet{Kleiner2014} proved that under some conditions, $S^{-1}\sum_{s=1}^{S}\xi\left\{Q^*_N\left(F_{s, n}\right)\right\}$ converges to $\xi\left\{Q_N(F)\right\}$ in probability as $n \to \infty$.

\citet{Sengupta2015} proposed the SDB that combines the idea of the BLB and a fast double bootstrap \citep{Davidson2002,Chang2015}. 
% which is called the subsampled double bootstrap (SDB). 
SDB first generates a large number $(S)$ of random subsets of size $n$ from the original full sample, which is similar to BLB's first step.  However,  the SDB selects subsets from the entire dataset by sampling with replacement,  while the BLB can sample subsets without replacement in the creation of the small subsets $\mathfrak{X}_{s, n}$. 
	Hence   
in the SDB,  the sampled subsets are denoted as $\mathfrak{X}_{s,n}^{*}=\big\{X_{s,1}^{*},\ldots,X_{s,n}^{*}\big\}$ with ${\ast}$ to highlight the resampling features, whereas in BLB the symbol is not used in $\mathfrak{X}_{s, n}$ as it may be viewed as true subsets of the original sample.  
For each $\mathfrak{X}_{s,n}^{*}$, SDB  generates only one inflated resample $\mathfrak{X}_{s, N}^{*(1)}=\big\{X_{s,1}^{*(1)}, \ldots, X_{s,N}^{*(1)}\big\}$ of the full size $N$
%\fn{Shall we replace $^{* (1)}$ by $^{\ast \ast}$ which is the conventional notation for double bootstrap sample ?  \color{red}{In the SDB paper, they use $^{* (1)}$ in order to compare to BLB, should we follow their notation?}} 
using the same approach as in the BLB. So for each $\mathfrak{X}_{s,n}^{*}$, only one $u(\hat{\theta}_{s,N}^{*(1)}, \hat{\theta}_{s,n}^{*})$ is obtained, where $\hat{\theta}_{s,n}^{*}=\hat{\theta}_N(F_{s, n}^{*})$ and $\hat{\theta}_{s,N}^{*(1)}=\hat{\theta}_N(F_{s, N}^{*(1)})$, $F_{s, n}^{*}$ and $F_{s, N}^{*(1)}$ are the empirical distributions of $\mathfrak{X}_{s, n}^{*}$ and $\mathfrak{X}_{s, N}^{*(1)}$, respectively. The empirical distribution of $\big\{u(\hat{\theta}_{s,N}^{*(1)},\hat{\theta}_{s,n}^{*})\big\}_{s=1}^S$, denoted as $Q^*_{N,n,S}$, is used to estimate $Q_N(F)$ and obtain $\xi\big\{Q^*_{N,n,S}\big\}$,  the SDB's estimate of $\xi\left\{Q_N(F)\right\}$.

It is noted that SDB only generates one double bootstrap resample for each %subset  
$\mathfrak{X}_{s,n}^{*}$ and the estimate of $\xi\{Q_N(F)\}$ is based on the $u(\hat{\theta}_{s,N}^{*(1)}, \hat{\theta}_{s,n}^{*})$ over different subsets  $\{ \mathfrak{X}^{\ast}_{s,n}\}_{s=1}^S$. Thus,  SDB requires $S \to \infty$. In contrast, the number of subsets {$S$}  in the BLB can be either finite or diverging. % while $B$ is required to diverge. 
In addition, SDB requires resampling from the entire dataset $\mathfrak{X}_N$ to get the random subsets $\mathfrak{X}_{s,n}^{*}$, while the BLB can choose predefined disjoint subsets $\{ \mathfrak{X}_{s,n}\}_{s=1}^S$. Thus, for massive datasets stored in different locations, SDB requires more data communications as a trade-off for having only one double bootstrap resample. % per subset ($\mathfrak{X}^{\ast}_{s,n}$). 

The attraction  of BLB and SDB is {in  avoiding resampling from the original full data sample but rather } %sizes equal to the size of the original sample by resampling 
 from  smaller size subsamples, which is facilitated by obtaining the sampling weights from the multinomial distribution. % random vector ($\mathfrak{W}_{s, n}^{*(b)}$) of length $n$. 
Indeed,  both BLB and SDB have computational advantages when the underlying  estimator can be calculated via  a weighted empirical function representation, % $\big(\mathfrak{X}_{s,n},\mathfrak{W}_{s,n}^{*(b)}\big)$, 
which is the case for the general M-estimators or other liner-type statistics. In those cases, the amount of computation for $\hat{\theta}_{s,N}^{*(b)}$ is scaled in $n$ rather than $N$. 
However, for non-linear estimators like the  symmetric statistics considered in this paper, the computational advantages of BLB and SDB via a weighted empirical function representation may not be available.
%    can not be guaranteed that this criterion is satisfied by a general distributed statistic in the form of $T_{N,K}$.

%There are several issues for the BLB,  

%\iffalse 

As noted   in \citet{Sengupta2015},  BLB uses  a small number of subsets (i.e. small $S$) but a large number of resamples for each subset (i.e. large $B$), 
 which may lead to only a small portion of the full dataset being covered. % to get the BLB estimator.  
Another issue is that  using too many resamples for each subset can increase the computational burden too.
 %Finally, %there are three tuning parameters ($\varsigma$, $S$, $B$) needed to be selected for the BLB, which is hard to choose optimally. 
%with fixed time budget, it is unclear how to choose ($S$, $B$) optimally.
For the SDB, it can not be implemented distributively as it requires to conduct the first level resampling from the entire data. 
%\fi 
We propose two versions of a distributed bootstrap method that overcomes these issues. %  which does not require communication among different data blocks.

\iffalse
Now we consider using the BLB\fn{restore this part as it may bring more perspectives.}  to assess the quality of the distributed statistic $T_{N,K}$ in \eqref{eq:T_N_K}, based on the distribution of $N^{1/2}(T_{N,K}-\theta)$. As $T_{N,K}$ is a distributed type statistic, adjustments are needed to make the BLB adaptive to $T_{N,K}$. Suppose we have a subset $\mathfrak{X}_{s,n}=\{X_{s,1},\ldots,X_{s,n}\}$ with size $n$ and we want to get a bootstrap copy of $N^{1/2}(T_{N,K}-\theta)$. First we need to calculate $\hat{\theta}_{s,n}=\theta(F_{s,n})$, where $F_{s,n}$ is the empirical distribution of $\mathfrak{X}_{s,n}$. For the $b$-th resampling procedure, as $T_{N,K}$ is a distributed statistic obtained from $K$ data blocks with sizes $n_1,\ldots,n_K$, respectively. we need to randomly sample sets $\mathfrak{X}_{s,n}^{*(b,k)}=\big\{X^{*(b,k)}_{s,1}, \ldots, X^{*(b,k)}_{s,n_k}\big\}$ from $\mathfrak{X}_{s,n}$ for $k = 1, \ldots, K$. By computing $T_{N,K,s}^{*(b,k)}=T\big(\mathfrak{X}_{s,n}^{*(b,k)}\big)$ for $k = 1, \ldots, K$, we can obtain the bootstrap version of $N^{1/2}(T_{N,K}-\theta)$ as
\begin{align*}
	N^{1/2}\left(N^{-1}\sum_{k=1}^{K}n_kT_{N,K,s}^{*(b,k)}-\hat{\theta}_{s,n}\right).
\end{align*}
\fi
\iffalse 
\begin{figure}
	\centering
	\includegraphics[width=0.8\linewidth]{BLB_and_SDB/adj}
	\caption{Adjustment of the BLB for $T_{N,K}$.}
	\label{fig:adj}
\end{figure}
\fi

\subsection{Distributed bootstrap}

Our aim is in approximating the distribution of $N^{1/2}(T_{N, K}-\theta)$ % for massive datasets, %especially when the data are stored in different locations. In this case, 
by avoiding resampling from the entire dataset. By recognizing the distributive nature of the statistics, we consider resampling within each data subset.

Suppose the entire data are divided into $K$ subsets: $\mathfrak{X}_{N,K}^{(1)}, \ldots, \mathfrak{X}_{N,K}^{(K)}$. For $k=1,\ldots,K$, let $F^{(k)}_{N,K}$ be the empirical distribution of $\mathfrak{X}_{N,K}^{(k)}$, and $\mathfrak{X}_{N,K}^{*(k)}=\big\{X^*_{k,1},\ldots,X^*_{k,n_k}\big\}$ be i.i.d.~samples drawn from $F^{(k)}_{N, K}$.  Repeat this procedure $B$ times, one obtains $B$ resampled data subsets $\mathfrak{X}_{N,K}^{*1(k)}, \ldots, \mathfrak{X}_{N,K}^{*B(k)}$. 
%For each subset $\mathfrak{X}_{N,K}^{(k)}$ {\bf of size $n_k$},  we generate $B$ resamples\fn{add how they are generated ? sampling with replacement ?} of size $n_k$, denote them as $\mathfrak{X}_{N,K}^{*1(k)}, \ldots, \mathfrak{X}_{N,K}^{*B(k)}$. 
Compute the corresponding statistic $T_{N,K}^{*b(k)} = T\big(\mathfrak{X}_{N,K}^{*b(k)}\big)$ for each resampled subset. Averaging them over the $K$ subsets leads to
\begin{align*}
	T_{N,K}^{*b} = N^{-1}\sum_{k=1}^{K}n_kT_{N,K}^{*b(k)}
\end{align*}
for $b = 1, \ldots, B$. Let 
%$F_{N,K}^{(k)}$ be the empirical distribution of $\mathfrak{X}_{N,K}^{(k)}$ and 
$\hat{\theta}^{(k)}_{N, K}=\theta\big(F_{N,K}^{(k)}\big)$ %\fn{shall we write it as $\hat{\theta}$ rather than $\theta^{\ast}$ ?  LH: Sure, I changed them over.}  
%\footnote{LH: For $T_{N,K}^{*b(k)} = T\big(\mathfrak{X}_{N,K}^{*b(k)}\big)$, %$\mathfrak{X}$ is used here because $T$ is calculated from the dataset. For %$\theta^{*(k)}_{N, K}=\theta\big(F_{N,K}^{(k)}\big)$, $F$ is used here because %$\theta$ is determined by the underlying distributed function $F$. So I prefer %to use two different items to define them.  SX: it is fine. However, I note that %$\theta(F)$ is not used previously much (if not at all). Is %$\theta\big(F_{N,K}^{(k)}\big)$ just the $T_{N, K}^{(k)}$ ?
%\textbf{LH: $\theta(F)$ is used in Section 2 when intruducing the form of %$T_N$. And $\theta\big(F_{N,K}^{(k)}\big)$ may be different as $T_{N, %K}^{(k)}$.}} 
be the analogy of $\theta=\theta(F)$ under $\mathfrak{X}_{N,K}^{(k)}$. Define $\hat{\theta}_{N,K}=N^{-1}\sum_{k=1}^{K}n_k\hat{\theta}^{(k)}_{N,K}$, 
then the empirical distribution of $\{N^{1/2}(T_{N,K}^{*b}-\hat{\theta}_{N, K})\}_{b=1}^B$ is used to approximate the distribution of $N^{1/2}(T_{N, K}-\theta)$. The algorithm is outlined in Table \ref{tab:01} in the supplementary material.
 
We call the above  the distributed bootstrap (DB) because the resampling is fulfilled within each data subset without the sample inflation as BLB. For each data subset in each iteration, we can calculate $T_{N,K}^{*b(k)}$ and $\hat{\theta}^{(k)}_{N, K}$ locally  avoiding data communication between different data subsets. These make the approach suitable for the massive data.

To explore the theoretical properties of the distributed bootstrap, % To facilitate the working of the distributed bootstrap procedure, 
we need some notations and assumptions. % on the statistics derived from each resampled data block. 
%{\sl For $k=1,\ldots,K$, denote $F^{(k)}_{N,K}$ as the empirical distribution of $\mathfrak{X}_{N,K}^{(k)}$, let $\mathfrak{X}_{N,K}^{*(k)}=\big\{X^*_{k,1},\ldots,X^*_{k,n_k}\big\}$ be i.i.d.~samples drawn from $F^{(k)}_{N, K}$.}\fn{repeat from earlier. Delete %them here. } 
Specifically, %for $k = 1, \ldots, K$, 
 $T^{*(k)}_{N,K}=T(\mathfrak{X}_{N,K}^{*(k)})$ is assumed to admit %the following form 
\begin{align}\label{eq:T_N_K_boot1}
	T^{*(k)}_{N,K} = \hat{\theta}^{(k)}_{N,K}+n_k^{-1}\sum_{i=1}^{n_k}\hat{\alpha}\big(X^*_{k,i}; F^{(k)}_{N,K}\big)+n_k^{-2}\sum_{1\leq i<j\leq n_k}\hat{\beta}\big(X^*_{k,i},X^*_{k,j};F^{(k)}_{N,K}\big)+R^{*(k)}_{N,K},
\end{align}
where $\hat{\theta}^{(k)}_{N, K}=\theta\big(F^{(k)}_{N, K}\big)$ and $R^{*(k)}_{N, K}=R\big(\mathfrak{X}_{N,K}^{*(k)}; F^{(k)}_{N, K}\big)$ are  analogues of $\theta$ and $R^{(k)}_{N,K}$ under $F^{(k)}_{N, K}$. Then $T^*_{N, K}=N^{-1}\sum_{k=1}^{K}n_kT^{*(k)}_{N,K}$ can be written as
\be
	T^*_{N, K} = \hat{\theta}_{N,K}+N^{-1}\sum_{k=1}^{K}\sum_{i=1}^{n_k}\hat{\alpha}\big(X^*_{k,i};F_{N,K}^{(k)}\big)+N^{-1}\sum_{k=1}^{K}n_k^{-1}\sum_{1\leq i<j\leq n_k}\hat{\beta}\big(X^*_{k,i},X^*_{k,j};F_{N,K}^{(k)}\big)+R^{*}_{N,K}, \label{eq:exp1}
\ee
where $\hat{\theta}_{N,K}=N^{-1}\sum_{k=1}^{K}n_k\hat{\theta}^{(k)}_{N,K}$ and $R^*_{N,K}=N^{-1}\sum_{k=1}^{K}n_kR^{*(k)}_{N,K}$. Here, $\{\hat{\alpha}(x; F_{N,K}^{(k)})\}_{k=1}^K$ and $\{\hat{\beta}(x, y; F_{N,K}^{(k)})\}_{k=1}^K$ are the empirical versions of $\alpha(x; F)$ and $\beta(x, y; F)$, which we regulate in the following condition. Let first denote %$\{Y_1, \ldots, Y_m\}$ be an i.i.d.~sample from $F$ with $F_m$ being its empirical distribution, and  
$S(F)$ as the support of $F$.  
\begin{condition}\label{condition:boot1}
	For %the empirical distribution $F_m$,  
	$k=1,\ldots,K$, $\hat{\alpha}(x; F_{N,K}^{(k)})$ and $\hat{\beta}(x, y; F_{N,K}^{(k)})$ satisfy $\sum_{i=1}^{n_k}\hat{\alpha}(X_{k,i}; F_{N,K}^{(k)})=0$, $\hat{\beta}(x, y; F_{N,K}^{(k)})$ is symmetric in $x$ and $y$, $\sum_{i=1}^{n_k}\hat{\beta}(X_{k,i}, y; F_{N,K}^{(k)})=0$ for any $y\in S(F)$. In addition, as $n_k\to\infty$,  $\sup\limits_{x \in S(F)}|\hat{\alpha}(x;F_{N,K}^{(k)})-\alpha(x;F)|=o_p(1)$ and
	$ \sup\limits_{x,y\in S(F)}|\hat{\beta}(x, y; F_{N,K}^{(k)})-\beta(x, y; F)|=o_p(1).$
\end{condition}

Condition \ref{condition:boot1} indicates that for $X_{k,i}^*$ with distribution $F_{N,K}^{(k)}$, $\E\big\{\hat{\alpha}\big(X_{k,i}^*;F_{N,K}^{(k)}\big)|F_{N,K}^{(k)}\big\}=0$ and $\E\big\{\hat{\beta}\big(X_{k,i}^*,y;F_{N,K}^{(k)}\big)|F_{N,K}^{(k)}\big\}=0$ for any $y\in S(F)$. Moreover, it requires that $\{\hat{\alpha}(x; F_{N,K}^{(k)})\}_{k=1}^K$ and $\{\hat{\beta}(x, y; F_{N,K}^{(k)})\}_{k=1}^K$ are uniformally consistent to $\alpha(x;F)$ and $\beta(x,y;F)$. %Explicit discussions on these conditions will be given shortly in the context of U-statistics.
The next theorem, inspired by \citet{Hall1986,Hall1988} and \citet{Lai1993}, establishes the consistency of the distributed bootstrap for the non-degenerated case of $\sigma_a^2 >0$. %\fn{could we say something on the degenerate case ? \color{red}{For the degenerate case, we can do resampling $\beta(X_{k,i}, X_{k,j}; F_{N,K}^{(k)})$ for U-statistics, but it is hard to discuss for general $T_{N,K}$.} SX: add some discussion about it. } 

\begin{theorem}\label{theo:edg_T_SC_N_boot1}
	Under the conditions of Theorem \ref{theo:uniform_convergence}, $\E\left|\beta(X_1,X_1;F)\right|^2<\infty$,  $T^*_{N, K}$ admits expansion (\ref{eq:exp1}) with $R^{*}_{N,K}$ satisfying $\P\big\{\big|R^*_{N, K}\big|\geq N^{-1/2}(lnN)^{-1}\big|F^{(1)}_{N,K},\ldots,F^{(K)}_{N,K}\big\}=o_p(1)$. If $\big\{\hat{\alpha}\big(x;F^{(k)}_{N,K}\big)\big\}_{k=1}^K$ and $\big\{\hat{\beta}\big(x,y;F^{(k)}_{N,K}\big)\big\}_{k=1}^K$ satisfy Condition \ref{condition:boot1}, then, as $N\rightarrow\infty$,
	\begin{align*}
		 \sup\limits_{x\in\mathbf{R}}\bigg|\P\bigg\{N^{1/2}\hat{\sigma}^{-1}_{\alpha,N,K}(T^*_{N,K}-\hat{\theta}_{N,K}) \leq x\big| & F^{(1)}_{N,K},\ldots,F^{(K)}_{N,K}\bigg\} \\
		 & -\P\left\{N^{1/2}\sigma_{\alpha}^{-1}(T_{N,K}-\theta) \leq x\right\}\bigg|=o_p(1),
	\end{align*}
	and $\hat{\sigma}^{2}_{\alpha,N,K}-\sigma_{\alpha}^2=o_p(1)$ where $\hat{\sigma}^{2}_{\alpha, N, K} = N^{-1}\sum_{k=1}^{K}n_k\E\big\{\hat{\alpha}^2\big(X^*_{k,1}; F^{(k)}_{N, K}\big)\big|F^{(k)}_{N, K}\big\}$.
\end{theorem}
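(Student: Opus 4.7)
I would mirror the proof of Theorem \ref{theo:uniform_convergence} in the bootstrap world: plug the expansion (\ref{eq:exp1}) into $T^*_{N,K}-\hat\theta_{N,K}$ and show that, conditional on the data, the rescaled linear term is asymptotically standard normal uniformly in $x$, while the quadratic term and $R^*_{N,K}$ are conditionally $o_p(N^{-1/2})$; then pair this with the unconditional limit provided by Theorem \ref{theo:uniform_convergence}. I would first settle the variance consistency. Since $X^*_{k,1}\mid F^{(k)}_{N,K}$ is uniform on the $k$-th block, $\hat\sigma^{2}_{\alpha,N,K}=N^{-1}\sum_{k=1}^{K}\sum_{i=1}^{n_k}\hat\alpha^2(X_{k,i};F^{(k)}_{N,K})$. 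Writing $\hat\alpha^2=\alpha^2+(\hat\alpha-\alpha)(\hat\alpha+\alpha)$ and using $\sup_x|\hat\alpha(x;F^{(k)}_{N,K})-\alpha(x;F)|=o_p(1)$ from Condition \ref{condition:boot1} together with the $(2+\delta)$-moment bound of Condition \ref{condition:alpha2}, the WLLN applied block-by-block yields $\hat\sigma^2_{\alpha,N,K}-\sigma_\alpha^2=o_p(1)$.

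Next, decompose $T^*_{N,K}-\hat\theta_{N,K}=S^*_{N,K}+Q^*_{N,K}+R^*_{N,K}$ following (\ref{eq:exp1}). Conditional on the data, $S^*_{N,K}=N^{-1}\sum_{k,i}\hat\alpha(X^*_{k,i};F^{(k)}_{N,K})$ is a sum of $N$ independent mean-zero variables with conditional variance $N^{-1}\hat\sigma^2_{\alpha,N,K}$, so a conditional Lyapunov CLT applies. The Lyapunov $(2+\delta)$-ratio equals $N^{-1-\delta/2}\sigma_\alpha^{-(2+\delta)}\sum_{k,i}|\hat\alpha(X_{k,i};F^{(k)}_{N,K})|^{2+\delta}$, and the bound $|\hat\alpha|^{2+\delta}\lesssim|\alpha|^{2+\delta}+|\hat\alpha-\alpha|^{2+\delta}$ combined with Condition \ref{condition:boot1} and the WLLN for $n_k^{-1}\sum_i|\alpha(X_{k,i};F)|^{2+\delta}$ forces the ratio to be $O_p(N^{-\delta/2})=o_p(1)$. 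This gives pointwise conditional convergence of $N^{1/2}\hat\sigma^{-1}_{\alpha,N,K}S^*_{N,K}$ to $\mathcal{N}(0,1)$ in probability, promoted to uniformity in $x$ by Polya's theorem. For the quadratic term, Condition \ref{condition:boot1} supplies the conditional degeneracy $\E\{\hat\beta(X^*_{k,i},y;F^{(k)}_{N,K})\mid F^{(k)}_{N,K}\}=0$, so cross products vanish and $\E\{(Q^*_{N,K})^2\mid\text{data}\}=N^{-2}\sum_k n_k^{-2}\binom{n_k}{2}\E\{\hat\beta^2(X^*_{k,1},X^*_{k,2};F^{(k)}_{N,K})\mid F^{(k)}_{N,K}\}$. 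Controlling the inner moment via the uniform consistency of $\hat\beta$ for the off-diagonal part and the hypothesis $\E|\beta(X_1,X_1;F)|^2<\infty$ to absorb the diagonal piece makes the conditional variance $O_p(KN^{-2})$, so $N^{1/2}Q^*_{N,K}=O_p((K/N)^{1/2})=o_p(1)$ since $K/N\to 0$. The stated hypothesis on $R^*_{N,K}$ gives $N^{1/2}R^*_{N,K}=o_p(1)$ conditionally in probability at once.

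Conditional Slutsky then yields $N^{1/2}\hat\sigma^{-1}_{\alpha,N,K}(T^*_{N,K}-\hat\theta_{N,K})\xrightarrow{d}\mathcal{N}(0,1)$ uniformly in $x$ with probability tending to one; Theorem \ref{theo:uniform_convergence} supplies the identical unconditional limit for $N^{1/2}\sigma_\alpha^{-1}(T_{N,K}-\theta)$, and a triangle inequality closes the argument. The main obstacle is verifying the conditional Lyapunov condition for the bootstrap linear term: I possess only uniform-in-probability approximation of $\hat\alpha$ by $\alpha$, yet must upgrade this to control of empirical $(2+\delta)$-moments of $\hat\alpha$ at the resampled points, uniformly across a (possibly diverging) number of blocks $K$. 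Ensuring that the moment transfer from $\alpha$ to $\hat\alpha$ does not blow up block-by-block—and that the aggregate Lyapunov ratio genuinely vanishes in probability rather than merely termwise—is where the delicate bookkeeping concentrates.
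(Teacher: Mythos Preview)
Your proposal is correct and follows essentially the same route as the paper's proof. The paper also reduces to showing conditional convergence to $\Phi$ and pairs it with Theorem \ref{theo:uniform_convergence} via the triangle inequality; it controls the $(2+\delta)$-th conditional moment of $\hat\alpha$ by exactly the decomposition $|\hat\alpha|^{2+\delta}\le C_\delta(|\hat\alpha-\alpha|^{2+\delta}+|\alpha|^{2+\delta})$ you use, invokes the Berry--Esseen bound (their Lemma~\ref{lem:l01}) in place of your Lyapunov-plus-Polya step, and handles $Q^*_{N,K}$ and $R^*_{N,K}$ just as you do; the variance-consistency argument is also identical in substance.
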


%\begin{remark}
	Theorem \ref{theo:edg_T_SC_N_boot1} gives theoretical support for the distributed bootstrap for the non-degenerate case. For specific statistics, we need to check if %the bootstrap distributed statistic 
	$T_{N,K}^*$ satisfies the conditions in Theorem \ref{theo:edg_T_SC_N_boot1}.  
	Section \ref{sec:U_stat_exp} in the supplementary material provides details checking on the conditions for the U-statistics.  
	The distributed bootstrap works for both finite and infinite $K$, as long as $K$ is of an appropriate order of $N$. The requirement on $K$ is hidden in the assumption that 
$$\P\big\{\big|R^*_{N, K}\big|\geq N^{-1/2}(lnN)^{-1}\big|F^{(1)}_{N,K},\ldots,F^{(K)}_{N,K}\big\}=o_p(1).$$ 
 For a distributed U-statistic of degree two as discussed in the supplementary material, that is equivalent to  $R_{N,K}^{*(k)}$ satisfying $\E\big(R^{*(k)}_{N,K}\big|F_{N,K}^{(k)}\big)=0$ and $\E\big(R^{*(k)2}_{N,K}\big|F_{N,K}^{(k)}\big)=O_p(n_k^{-4})$, for $k=1,\ldots,K$. Thus, the consistency of the distributed bootstrap for the distributed U-statistics 
%\fn{U-statistics ? LH: We are discussing the distributed U-statistics here, added ``distributed'' here.} 
 is ensured as long as $K=O(N^{\tau'})$ for a positive $\tau'<1$.
%\end{remark}
%For specific statistics, we need to check  It is worth mentioning that the underlying empirical distribution for each subset are different, as we are resampling from each subset, the bootstrap resamples $\big\{\mathfrak{X}_{N,K}^{*(1)},\ldots,\mathfrak{X}_{N,K}^{*(K)}\big\}$ are not identically distributed. 

Theorem \ref{theo:edg_T_SC_N_boot1} ensures that the distributed bootstrap can be utilized in a broad range by combining it with the continuous mapping theorem and delta method, for instance in the variance estimation and confidence intervals.
%\fn{The practical implementation with replication to size $B$ boostrap in each subset need to be provided as in BLB abd SDB. Below part has some vague, but we need more explicit.}  
Specifically, let $\hat{\sigma}^2_{DB} = B^{-1}\sum_{b=1}^{B}\big(T_{N,K}^{*b} - B^{-1}\sum_{\ell=1}^{B}T_{N,K}^{*\ell}\big)^2$
%\begin{align*}
%	\hat{\sigma}^2_{DB} = B^{-1}\sum_{b=1}^{B}\left(T_{N,K}^{*b} - B^{-1}\sum_{l=1}^{B}T_{N,K}^{*l}\right)^2
%\end{align*}
be the sample variance of $\{T_{N,K}^{*b},~b=1,\ldots,B\}$. Then $\hat{\sigma}^2_{DB}$ is a consistent estimator of $N^{-1}\sigma^2_{\alpha}$, which is a consistent estimator of $\Var(T_{N, K})$.  In addition, denote $u^*_{\tau}$ as the lower $\tau$ quantile of the empirical distribution of  $\big\{N^{1/2}\big(T_{N,K}^{*b}-\hat{\theta}_{N,K}\big),~b=1,\ldots,B\big\}$, then an equal-tail two-sided confidence interval for $\theta$ with level $1-\tau$ can be constructed as
$$ \left(T_{N, K}-N^{-1/2}u^*_{1-\tau/2},~T_{N, K}-N^{-1/2}u^*_{\tau/2}\right). $$
	
For degenerate statistics,  
%\fn{can we extend to non-U Statistics ? LH: We can, but we need to mention the requirement on $R_{N,K}^{*(k)}$.  SX: yes, let us add it on. LH: added. } 
%with $\sigma_{\alpha}^2=0$ but $\sigma_{\beta}^2>0$, 
the distributed bootstrap can be adapted by replacing $\big(T_{N,K}^{*(k)}-\hat{\theta}_{N,K}^{(k)}\big)$ with $n_k^{-2}\sum_{1\leq i<j\leq n_k}\hat{\beta}(X_{k,i}^*,X_{k,j}^*;F_{N,K}^{(k)})$ for each data block. Similar to Theorem \ref{theo:T_N_K_degenerate}, the consistency of the distributed bootstrap for the degenerate case needs to be discussed separately for $K$ being finite or $K\to\infty$. 
When $K$ is fixed, the conditional distribution of $2\sum_{k=1}^{K}n_k^{-1}\sum_{1\leq i<j\leq n_k}\hat{\beta}\big(X^*_{k,i},X^*_{k,j};F_{N,K}^{(k)}\big)$ is consistent to the distribution of $2N(T_{N,K}-\theta)$ if $\P\big\{\big|R^*_{N, K}\big|\geq N^{-1}(lnN)^{-1}\big|F^{(1)}_{N,K},\ldots,F^{(K)}_{N,K}\big\}=o_p(1)$. When $K\to\infty$ and $\P\big\{\big|R^*_{N, K}\big|\geq K^{1/2}N^{-1}(lnN)^{-1}\big|F^{(1)}_{N,K},\ldots,F^{(K)}_{N,K}\big\}=o_p(1)$,  that  conditional distribution of  $2^{1/2}K^{-1/2}\sum_{k=1}^{K}n_k^{-1}\sum_{1\leq i<j\leq n_k}\hat{\beta}\big(X^*_{k,i},X^*_{k,j};F_{N,K}^{(k)}\big)$ can approximate the distribution of $2^{1/2}K^{-1/2}N(T_{N,K}-\theta)$ can be established. %\fn{how about the results ? what kind of results we will get for the degenerate case in relation to Th 5.4 ? \textbf{LH: do we need to provide technical details here?} SX: not necessarily, we at least should provide some insights on what the results might be for the non-degenerate case.}  %See \citet{Arcones1992} for the bootstrap of degenerate U-statistics.}\fn{Is here the right place for citing the reference ?  It gives the impression that the proofs are well covered by this paper. If you really want to cite it, do it a few sentence earlier.} 
In the next subsection, we will propose a pseudo-distributed bootstrap that works for both degenerate and non-degenerate cases.

\subsection{Pseudo-distributed bootstrap}

Although the distributed bootstrap lead to substantial  computational saving due to its distributive nature, it is still computationally involved  when the size of the data is massive since the distributed statistics need to be re-calculated for each bootstrap replication. 
 To further reduce the computational burden, we consider another way to approximate the distribution of $T_{N,K}$ under the case of diverging $K$. % $K\rightarrow\infty$.

The idea comes from the expression
\begin{align*}
	T_{N,K}=N^{-1}\sum_{k=1}^{K}n_kT_{N,K}^{(k)}=K^{-1}\sum_{k=1}^K(Kn_k/N)T_{N,K}^{(k)},
\end{align*}
which indicates that $T_{N,K}$ is the average of $K$ independent random variables. Hence,  when $K$ is large, approximating the distribution of $T_{N,K}$ is similar to that  of the sample mean of independent but not necessary  identically distributed samples. 
This leads us to propose  directly resampling $\big\{T_{N,K}^{(k)}\big\}_{k=1}^K$ rather than the original data.

We first consider the non-degenerate case of $\sigma_{\alpha}^2>0$. As different subsets of data may have different sample sizes,  we need to scale $\big\{T_{N,K}^{(k)}\big\}_{k=1}^K$ before the resampling. Let 
$$\mathcal{T}_{N,K}^{(k)}=N^{-1/2}K^{1/2}n_kT_{N,K}^{(k)} \quad \hbox{for} \quad k=1,\ldots,K$$ 
 and $F_{K, \mathcal{T}}$ be the empirical distribution of $\{\mathcal{T}_{N, K}^{(k)}\}_{k=1}^K$. To estimate the distribution of $N^{1/2}(T_{N, K}-\theta)$, we  draw $B$ independent resamples $\big\{\mathcal{T}_{N, K}^{*b(1)},\ldots,\mathcal{T}_{N, K}^{*b(K)}\big\}$ for $b=1,\ldots,B$ from $F_{K, \mathcal{T}}$. Let $\mathcal{T}_{N,K}^{*b}=K^{-1}\sum_{k=1}^K\mathcal{T}_{N, K}^{*b(k)}$,  %for $b=1, \ldots, B$, 
 then the empirical distribution of $\big\{K^{1/2}\big(\mathcal{T}_{N,K}^{*b}-N^{1/2}K^{-1/2}T_{N,K}\big)\big\}_{b=1}^B$ is used to estimate the distribution of $N^{1/2}(T_{N, K}-\theta)$. We call this algorithm the pseudo-distributed bootstrap (PDB) and its procedure is summarized in Table \ref{tab:02} in the supplementary material.

The PDB is the conventional bootstrap carried out on the scaled subset-wise statistics $\big\{\mathcal{T}_{N, K}^{(k)}\big\}_{k=1}^K$, which are independent but not necessary identically distributed. The bootstrap procedures under non-i.i.d.~models has been studied in \citet{Liu1988}. 
%The key idea is that although the statistics $\big\{\mathcal{T}_{N, K}^{(k)}\big\}_{k=1}^K$ are not i.i.d., the bootstrap can still be successful by drawing i.i.d. resamples. 
The following theorem establishes the asymptotic properties of the PDB for the non-degenerate case. %, which is similar to Theorem 1 in \citet{Liu1988}.

\begin{theorem}\label{corr:pdb1}
	%For $T_{N,K}$ in (\ref{eq:T_N_K}) ,  
	Under the conditions of Theorem \ref{theo:uniform_convergence}.
	%let $\mathcal{T}_{N,K}^{(k)}=N^{-1/2}K^{1/2}n_kT_{N,K}^{(k)}$ for $k=1,\ldots,K$ and $F_{K, \mathcal{T}}$ be the empirical distribution of $\{\mathcal{T}_{N, K}^{(k)}\}_{k=1}^K$. 
	Suppose that $\mathcal{T}_{N, K}^{*(1)},\ldots, \mathcal{T}_{N, K}^{*(K)}$ is an i.i.d.~sample from $F_{K, \mathcal{T}}$ and $\mathcal{T}_{N,K}^{*}=K^{-1}\sum_{k=1}^K\mathcal{T}_{N, K}^{*(k)}$. If $\E\left|\beta(X_1, X_2; F)\right|^{2+\delta}<\infty$, $\sup_k\E\big|n_k^{1/2}R_{N,K}^{(k)}\big|^{2+\delta}<\infty$, and $\sup_kN^{-1/2}K^{1/2}|n_k-NK^{-1}|\to0$ as $N\to\infty$, then with probability $1$, as $K\rightarrow\infty$,
	\begin{align*}
		\sup\limits_{x \in \mathbf{R}}\left|\P\left\{K^{1/2}(\mathcal{T}_{N,K}^{*}-N^{1/2}K^{-1/2}T_{N,K}) \leq x\big|F_{K, \mathcal{T}}\right\} - \P\left\{N^{1/2}(T_{N,K}-\theta) \leq  x\right\}\right|= o(1).
	\end{align*}	
\end{theorem}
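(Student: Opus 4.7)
The plan is to reduce the statement to a conditional Lindeberg--Feller central limit theorem for a sum of i.i.d.~draws from $F_{K,\mathcal{T}}$, and to couple it with Theorem \ref{theo:uniform_convergence} through Polya's theorem. Note first that $\bar{\mathcal{T}}_{N,K}:=K^{-1}\sum_{k=1}^K \mathcal{T}_{N,K}^{(k)}=N^{1/2}K^{-1/2}T_{N,K}$, so the centered bootstrap quantity can be rewritten as $K^{1/2}(\mathcal{T}_{N,K}^{*}-\bar{\mathcal{T}}_{N,K})=K^{-1/2}\sum_{k=1}^K(\mathcal{T}_{N,K}^{*(k)}-\bar{\mathcal{T}}_{N,K})$, a normalized sum of $K$ conditionally i.i.d.~observations from $F_{K,\mathcal{T}}$.

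By Theorem \ref{theo:uniform_convergence} the unconditional CDF of $N^{1/2}(T_{N,K}-\theta)$ converges uniformly to that of $\mathcal{N}(0,\sigma_{\alpha}^2)$. Since the normal CDF is continuous, Polya's theorem implies it suffices to show that, with probability one, the conditional law of $K^{1/2}(\mathcal{T}_{N,K}^{*}-\bar{\mathcal{T}}_{N,K})$ given $F_{K,\mathcal{T}}$ converges weakly to $\mathcal{N}(0,\sigma_{\alpha}^2)$; the triangle inequality then yields the claimed uniform bound. I would establish this weak convergence via the Lindeberg--Feller CLT in the spirit of \citet{Liu1988}, which reduces to verifying (i) the sample variance $s_K^2:=K^{-1}\sum_{k=1}^K(\mathcal{T}_{N,K}^{(k)}-\bar{\mathcal{T}}_{N,K})^2$ converges almost surely to $\sigma_{\alpha}^2$, and (ii) the conditional Lindeberg condition holds almost surely.

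To analyze $s_K^2$, I would substitute the expansion of $T_{N,K}^{(k)}$ from Section \ref{sec:SACE} to write $\mathcal{T}_{N,K}^{(k)}=N^{-1/2}K^{1/2}n_k\theta+A_k+\tilde{B}_k+\tilde{R}_k$, where $A_k=N^{-1/2}K^{1/2}\sum_{i=1}^{n_k}\alpha(X_{k,i};F)$, $\tilde{B}_k=N^{-1/2}K^{1/2}n_k^{-1}\sum_{i<j}\beta(X_{k,i},X_{k,j};F)$, and $\tilde{R}_k=N^{-1/2}K^{1/2}n_k R_{N,K}^{(k)}$. The deterministic offsets combine to $N^{-1/2}K^{1/2}(n_k-NK^{-1})\theta$ in the centered quantity $\mathcal{T}_{N,K}^{(k)}-\bar{\mathcal{T}}_{N,K}$, and by the hypothesis $\sup_k N^{-1/2}K^{1/2}|n_k-NK^{-1}|\to 0$ they vanish uniformly in $k$. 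The $\{A_k\}$ are independent with $\E(A_k)=0$ and $K^{-1}\sum_k\E(A_k^2)=N^{-1}\sigma_{\alpha}^2\sum_k n_k=\sigma_{\alpha}^2$, so $K^{-1}\sum_k A_k^2\to\sigma_{\alpha}^2$ almost surely by Kolmogorov's SLLN applied to the independent but non-identically-distributed sequence $\{A_k^2\}$, whose $(1+\delta/2)$-th moments are uniformly bounded thanks to Condition \ref{condition:alpha2}. A parallel moment computation using $\E|\beta(X_1,X_2;F)|^{2+\delta}<\infty$ and $\Var(R_{N,K}^{(k)})=o(n_k^{-\tau_2})$ (inherited from Condition \ref{condition:R_N} with $\tau_2>1$) yields $K^{-1}\sum_k\tilde{B}_k^2\to 0$ and $K^{-1}\sum_k\tilde{R}_k^2\to 0$ a.s., while cross-terms are handled by Cauchy--Schwarz, giving $s_K^2\to\sigma_{\alpha}^2$ a.s. The conditional Lindeberg condition $K^{-1}\sum_k(\mathcal{T}_{N,K}^{(k)}-\bar{\mathcal{T}}_{N,K})^2\mathbf{1}\{|\mathcal{T}_{N,K}^{(k)}-\bar{\mathcal{T}}_{N,K}|>\epsilon K^{1/2}\}\to 0$ then follows from the uniform bound $\sup_k\E|\mathcal{T}_{N,K}^{(k)}-\bar{\mathcal{T}}_{N,K}|^{2+\delta}=O(1)$, which in turn follows from the $(2+\delta)$-moment conditions on $\alpha$, $\beta$, and $n_k^{1/2}R_{N,K}^{(k)}$, via Markov's inequality and Borel--Cantelli.

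The main obstacle I anticipate is the almost-sure control of $s_K^2$. Convergence in probability is straightforward from the moment identities above, but the a.s.~statement requires Kolmogorov's SLLN for independent non-identically distributed variables, together with a careful Borel--Cantelli bound to uniformly suppress the remainder contributions $\tilde{R}_k$ as both $K$ and $N$ grow in tandem. Ensuring that the joint growth of $(K,N)$ permitted by Theorem \ref{theo:uniform_convergence} is enough to make the relevant tail probabilities summable, so that a.s.~convergence actually holds uniformly along the sequence, will be the technical crux.
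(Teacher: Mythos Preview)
Your overall strategy matches the paper's: both reduce to showing that the conditional law of $K^{1/2}(\mathcal{T}_{N,K}^{*}-\bar{\mathcal{T}}_{N,K})$ converges a.s.\ to $\mathcal{N}(0,\sigma_\alpha^2)$, then couple with the unconditional CLT for $N^{1/2}(T_{N,K}-\theta)$ via the triangle inequality, and both need the empirical variance $\bar{s}^2\to\sigma_\alpha^2$ a.s. The paper differs from your plan in two places. First, instead of verifying the Lindeberg condition directly, it applies a Berry--Esseen bound (Lemma~\ref{lem:l01}): once $K^{-1}\sum_k|\mathfrak{T}_{N,K}^{(k)}|^{2+\delta_1}$ is shown to be a.s.\ bounded for some $0<\delta_1<\delta$, the Berry--Esseen inequality delivers the conditional normal approximation with an explicit $O(K^{-\delta_1/2})$ rate, bypassing the separate Lindeberg verification. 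Second, for the almost-sure moment control the paper invokes the Marcinkiewicz--Zygmund SLLN (Lemma~\ref{lem:l02}) rather than Kolmogorov's: you only have $(2+\delta)$-moments of $\mathcal{T}_{N,K}^{(k)}$, hence only $(1+\delta/2)$-moments of its square and of $|\mathcal{T}_{N,K}^{(k)}|^{2+\delta_1}$, which is precisely the regime where M--Z applies but the classical Kolmogorov variance-summability condition is not available. Your componentwise decomposition into $A_k,\tilde B_k,\tilde R_k$ would work, but the paper treats $\mathfrak{T}_{N,K}^{(k)}=\mathcal{T}_{N,K}^{(k)}-N^{1/2}K^{-1/2}\theta$ as a single object and uses only $\sup_k\E|\mathfrak{T}_{N,K}^{(k)}|^{2+\delta}<\infty$, which is shorter and avoids handling cross-terms. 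Your identification of the a.s.\ variance convergence as the technical crux is accurate; the M--Z lemma is the clean tool that resolves it.
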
	

Theorem \ref{corr:pdb1} shows that under moderate conditions on $n_k$, $K$ and the moments of $T_{N,K}^{(k)}$, the PDB is asymptotic consistent when $\sigma^2_{\alpha}>0$. As we have mentioned earlier, the subset based statistics $\big\{T_{N,K}^{(k)}\big\}_{k=1}^K$ need to be re-scaled first by their respective sample sizes. The condition $\sup_kN^{-1/2}K^{1/2}|n_k-NK^{-1}|\to0$ ensures that the size of each subset should not be too different from each other.

%\begin{remark}
	Compared to the distributed bootstrap, besides computing saving, an appealing property of the PDB is that it does not require the bootstrap distributed statistics admitting form \eqref{eq:exp1} 
	%\fn{what does this mean ?  what form ? Do u mean conditions ? LH: I need to refer equation \eqref{eq:exp1} here, for distributed bootstrap, we need to assume that the bootstrapped version of the distributed statistics $T^*_{N,K}$ admits form \eqref{eq:exp1} to %establish consistency. } 
	as assumed in Theorem \ref{theo:edg_T_SC_N_boot1}. This makes the PDB more versatile and easier to verify, although it does requires that $K$  diverges.
	%Theorem \ref{corr:pdb1} shows the consistency of the pseudo-distributed bootstrap under moderate conditions. 
	%It worths mentioning that when $R_N$ is of a proper order, the PDB has appropriate convergence rate,  %(for instance, U-statistics), 
	%which is at the of order $O_p(N^{-1/2}+K^{-1})$. Specific discussions on this issue can be found in the supplementary material {for U-statistics.} 

Another advantage of the PDB is that it can be applied for the degenerate  case. Indeed, when $\sigma_{\alpha}^2=0$ but $\sigma_{\beta}^2>0$, $T_{N,K}$ is still an average of $K$ independent random variables. %In order to carry out the pseudo-distributed bootstrap procedure, 
One only needs to rescaled $\big\{T_{N,K}^{(k)}\big\}_{k=1}^K$ with a different scaling factor such that $\mathcal{T}_{N,K}^{(k)}=n_kT_{N,K}^{(k)}$ for $k=1,\ldots,K$, followed by the same procedures  as those for the non-degenerate case.  The next theorem gives theoretical support for the PDB 
%when applied to the distributed statistic % in the form \eqref{eq:T_N_K} 
%$T_{N,K}$ 
under degeneracy.

\begin{theorem}\label{corr:pdb2}
	For $T_{N,K}$ in (\ref{eq:T_N_K}) with $\sigma^2_{\alpha}=0$ but $\sigma^2_{\beta}>0$, under Conditions \ref{condition:alpha} and \ref{condition:R_N} such that  $\E(R_N)=b_1N^{-\tau_1}+o(N^{-\tau_1})$ and $\Var(R_N)=o(N^{-\tau_2})$ for some $\tau_1>1$, $\tau_2\geq2$, assume $n_k=NK^{-1}$ for $k=1,\ldots,K$ and $K=O(N^{\tau'})$ for some positive constant $\tau'<1-1/(2\tau_1-1)$. 
	Suppose that $\mathcal{T}_{N, K}^{*(1)},\ldots, \mathcal{T}_{N, K}^{*(K)}$ is an i.i.d.~sample from $F_{K, \mathcal{T}}$ and denote $\mathcal{T}_{N,K}^{*}=K^{-1}\sum_{k=1}^K\mathcal{T}_{N, K}^{*(k)}$. Assume $\E\left|\beta(X_1, X_2; F)\right|^{2+\delta'}<\infty$ and $\sup_k\E\big|n_kR_{N,K}^{(k)}\big|^{2+\delta'}<\infty$ for some positive constant $\delta'$, then with probability $1$, as $K\rightarrow\infty$,
	\begin{align*}
		\sup\limits_{x \in \mathbf{R}}\left|\P\left\{K^{1/2}(\mathcal{T}_{N,K}^{*}-NK^{-1}T_{N,K}) \leq x\bigg|F_{K, \mathcal{T}}\right\} - \P\left\{NK^{-1/2}(T_{N,K}-\theta) \leq  x\right\}\right|= o(1).
	\end{align*}
\end{theorem}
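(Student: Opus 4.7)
The plan is to reduce both sides of the supremum to Gaussian limits with the same variance $\sigma_\beta^2/2$, so that P\'olya's theorem delivers the uniform convergence. With $n_k = N/K =: n$, observe that $N T_{N,K} = \sum_{k=1}^{K} n T_{N,K}^{(k)} = \sum_{k=1}^{K} \mathcal{T}_{N,K}^{(k)}$, hence $NK^{-1} T_{N,K} = \bar{\mathcal{T}}_{N,K}$, the sample mean of the block-wise $\mathcal{T}_{N,K}^{(k)}$. Thus the bootstrap side equals $K^{-1/2}\sum_k(\mathcal{T}_{N,K}^{*(k)} - \bar{\mathcal{T}}_{N,K})$, while the target side rewrites as $K^{-1/2}\sum_k(\mathcal{T}_{N,K}^{(k)} - n\theta) = K^{1/2}(\bar{\mathcal{T}}_{N,K} - n\theta)$. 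It therefore suffices to show both converge in distribution to the same centered Gaussian.

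For the target, I would invoke Theorem~\ref{theo:T_N_K_degenerate}(ii), whose hypothesis $R_{N,K} = o_p(K^{1/2}N^{-1})$ must be checked from Condition~\ref{condition:R_N}. The bias is $\E(R_{N,K}) = O(K^{\tau_1}N^{-\tau_1})$, which is $o(K^{1/2}N^{-1})$ iff $K = o(N^{1-1/(2\tau_1-1)})$, precisely what $\tau' < 1-1/(2\tau_1-1)$ guarantees; and $\Var(R_{N,K}) = o(KN^{-\tau_2}) = o(KN^{-2})$ since $\tau_2 \geq 2$, so Chebyshev delivers $R_{N,K} = o_p(K^{1/2}N^{-1})$. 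This yields $NK^{-1/2}(T_{N,K} - \theta) \xrightarrow{d} \mathcal{N}(0,\sigma_\beta^2/2)$.

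For the bootstrap side, the block statistics $\mathcal{T}_{N,K}^{(1)},\ldots,\mathcal{T}_{N,K}^{(K)}$ form an i.i.d.\ triangular array with $\Var(\mathcal{T}_{N,K}^{(k)}) = \sigma_\beta^2(n-1)/(2n) + o(1) \to \sigma_\beta^2/2$ (the dominant contribution comes from the degenerate $\beta$-part, while $n R_{N,K}^{(k)}$ is negligible since $\tau_2 \geq 2$) and a uniformly bounded $(2+\delta')$-moment coming from $\E|\beta(X_1,X_2;F)|^{2+\delta'}<\infty$ together with $\sup_k \E|n_k R_{N,K}^{(k)}|^{2+\delta'}<\infty$. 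A strong law for triangular arrays with uniformly bounded $(2+\delta')$-moments then gives $s_K^2 := K^{-1}\sum_k (\mathcal{T}_{N,K}^{(k)} - \bar{\mathcal{T}}_{N,K})^2 \to \sigma_\beta^2/2$ almost surely, and the same bound forces the conditional Lyapunov ratio $K^{-\delta'/2}\, K^{-1}\sum_k |\mathcal{T}_{N,K}^{(k)} - \bar{\mathcal{T}}_{N,K}|^{2+\delta'} / s_K^{\,2+\delta'}$ to vanish a.s. The conditional Lyapunov CLT, in the non-i.i.d.\ form of \citet{Liu1988}, then yields $K^{1/2}(\mathcal{T}_{N,K}^{*} - \bar{\mathcal{T}}_{N,K}) \xrightarrow{d} \mathcal{N}(0,\sigma_\beta^2/2)$ conditionally, almost surely, and P\'olya's theorem upgrades both convergences to uniform convergence of distribution functions.

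The main obstacle is the almost-sure (as opposed to in-probability) claim in a triangular-array setting: because $\mathcal{T}_{N,K}^{(k)}$ depends on $N$ through the subsample size $n = N/K$, standard bootstrap CLTs for a fixed population do not apply off the shelf, and one must verify that $s_K^2$ and the Lyapunov ratio converge for almost every realization of $\mathfrak{X}_N$. This is where the uniform $(2+\delta')$-moment bounds on the block statistics do the heavy lifting, via moment-truncation combined with Borel--Cantelli, and this is also why the degenerate case demands $\tau_1 > 1$, $\tau_2 \geq 2$, and the sharper rate $\tau' < 1-1/(2\tau_1-1)$, which are strictly stronger than the analogous requirements for the non-degenerate case treated in Theorem~\ref{corr:pdb1}.
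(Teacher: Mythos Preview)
Your proposal is correct and follows essentially the same route as the paper: reduce both the target and the bootstrap distributions to the common Gaussian limit $\mathcal{N}(0,\sigma_\beta^2/2)$, controlling the remainder via the rate $\tau'<1-1/(2\tau_1-1)$ on the target side and using the uniform $(2+\delta')$-moment bounds together with a Liu (1988)-type SLLN to obtain $s_K^2\to\sigma_\beta^2/2$ almost surely on the bootstrap side. The only cosmetic difference is that the paper applies the Berry--Esseen inequality (its Lemma~\ref{lem:l01}) directly---which already yields uniform convergence without a separate appeal to P\'olya---and invokes the Marcinkiewicz--Zygmund SLLN (its Lemma~\ref{lem:l02}) explicitly for the almost-sure moment bound you sketch via Borel--Cantelli.
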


Compared to the case of $\sigma^2_{\alpha}>0$, stronger conditions are needed for degenerate case. This is reflected firstly in that $\tau_1$ needs to be  strictly larger than $1$ such that $R_{N,K}$ can be dominated by the quadratic term involving $\beta$. Secondly, $n_k$ is assumed to be the same for all subsets and $K$ is required to have a slower growth rate to $N$. Lastly,  a stronger moment condition is needed for $\big\{R_{N,K}^{(k)}\big\}_{k=1}^K$. %Most of these conditions make effects on controlling the remainder terms $R_{N,K}^{(k)}$.
That its applicable for both non-degenerate and degenerate statistics reveals the versatility of the PDB. % with the procedure are the same except having different scaling factors.

\iffalse 
%\begin{remark}
	According to Theorem 1 in \citet{Liu1988}, the consistency of the pseudo-distributed bootstrap only requires certain moment conditions on the block-wise statistics $\{T_{N,K}^{(k)}\}_{k=1}^K$. This indicates that the pseudo-distributed bootstrap is workable for a broad type of statistics, not restricted to $T_{N,K}$ in the form \eqref{eq:T_N_K}.
%\end{remark}
\fi

Before  concluding this section,  we compare the distributed bootstrap (DB), the PDB, BLB and SDB in approximating the distribution of $T_{N,K}$. First, we compare them regarding their computational complexity. To make our discussion more straightforward, we assume that the entire data are divided evenly into $K$ blocks such that $n_1=\cdots=n_K=K^{-1}N$.

\renewcommand{\arraystretch}{1}
\begin{table}[h]
	\centering
	\caption{ Computing complexity of the distributed bootstrap (DB), the pseudo-distributed bootstrap (PDB), the BLB and the SDB.}
	\begin{tabular}{|c|c|c|c|}
		\hline
		Methods &  Computing Complexity & Distributed or Not & Requirement on $K$ \\\hline
		DB & $(B+1)K\times t(K^{-1}N)$ & distributed & finite or diverging  \\
		PDB & $K\times t(K^{-1}N)+BK$ & distributed & diverging \\
		BLB & $S\left\{BK\times t(K^{-1}N)+t(n)\right\}$ & distributed & finite or diverging \\
		SDB & $S\left\{K\times t(K^{-1}N)+t(n)\right\}$ & not distributed & finite or diverging \\\hline
	\end{tabular}
	\label{tab:03}
\end{table}

Table 3 reports the computational complexity of the four methods together with specification on their distributive nature and the requirement on the total number of subsets $K$.  %The entires of the table can be explained as follows. 
Suppose that the time cost for calculating the statistic in \eqref{eq:T_N} on a sample of size $m$ is $t(m)$, then the computing time of BLB with $B$ resamples and $S$ subsets is $S\left\{BK\times t(K^{-1}N)+t(n)\right\}$, where $n$ is the size of the smaller subset $\mathfrak{X}_{s, n}$. For SDB with $S$ random subsets of size $n$, its computational complexity is $S\left\{K\times t(K^{-1}N)+t(n)\right\}$.  
If we generate $B$ resamples in the distributed bootstrap, its computational cost would be $(B+1)K\times t(K^{-1}N)$ for the distributed bootstrap. In addition, the PDB %pseudo-distributed bootstrap 
requires $K\times t(K^{-1}N)+BK$ time to fulfill its implementation with $B$ pseudo resamples. Thus, the PDB  is the fastest among the four methods. 
%, due to that the pseudo-distributed bootstrap benefits from avoiding recalculating the distributed statistic for each iteration. 
The distributed bootstrap and the SDB have similar computational cost if $S$ is close to $B$. However, the SDB can not be implemented distributively, while the other three methods can. % be implemented distributively within each data block. 
For the BLB, its computational complexity depends on both $S$ and $B$. 
%{\sl Moreover, for given a fixed time budget, it is not clear how to select $S$ and $B$ to achieve the optimal statistical accuracy \citep{Sengupta2015}.}\fn{delete this sent. ?} 
%For massive datasets stored in different locations, the advantage of the SDB is limited because it requires random sampling from the entire dataset, . 
Furthermore, only the PDB requires $K$ diverges, while the other three  work for both  $K$ being finite or diverging. %Comparison of these four resampling methods is summarized in Table \ref{tab:03}.

%\section{Numerical studies}
\section{Simulation studies}\label{sec:simulation}

In this section, we report simulation studies to evaluate the performance of the proposed distributed approaches and the associated bootstrap algorithms.  In particular, we compare the proposed bootstrap methods to the BLB and  SDB  approaches. The statistic we considered was the Gini's mean difference
\begin{align*}
	U_N = 2\{N(N-1)\}^{-1}\sum_{1\leq i<j\leq N}|X_i - X_j|.
\end{align*}
It is an unbiased estimator of a dispersion parameter $\theta_U=\E|X_i-X_j|$ which measures the variability of a distribution. In addition, $U_N$ is a U-statistic of degree two and can be expanded to  the form \eqref{eq:T_N}. 
%In this section, we report numerical studies, using both simulated and real datasets, to evaluate the performance of the proposed distributed approaches and the bootstrap algorithms. We also compare our methods with the BLB \citep{Kleiner2014} and the SDB \citep{Sengupta2015} to reveal the advantages of our methods. We focus on two different setups, one is the Gini's mean difference which measures the variability of a distribution, the other one is testing independence using the distributed distance covariance. We fix the sample size $N = 100000$ and vary the number of subsets $K$ for different scenarios in the simulation studies. All the experiments executed in this section were ran in R with a single Intel(R) Core(TM) i7 4790K @4.0 GHz processor.
%For massive data, we introduce the distributed version of $U_N$. 
Suppose the entire data are divided into $K$ data blocks with the $k$-th having size $n_k$. For each subset, denote $U_{N,K}^{(k)}$ as the Gini's mean difference obtained from the $k$-th data block, then the distributed Gini's mean difference estimator is $U_{N,K}=N^{-1}\sum_{k=1}^{K}n_kU_{N,K}^{(k)}$. %As $U_N$ is an unbiased estimator of $\theta$, as long as $K=o(N)$, $U_{N,K}$ attains the same estimation efficiency as $U_N$. 

%\subsection{Gini's mean difference}
%As demonstrated in Section \ref{sec:u_stat}, the split-and-conquer version statistic $ U_{N, M} $ is second-order accurate as $ T_N $ in distribution for both standardized and studentized statistic. We can use the jackknife type estimator $ N^{-1}S^2_{UN, M} $ to get an consistent estimator of the variance of $ U_{N, M}. $ And approximation of the distributions can be obtained by cooperating bootstrap with split-and-conquer procedure, which can be used to establish confidence intervals. The computational cost of calculating $ U_{N, M} $ and $ S^2_{UN, M} $ is decreasing at the order of $O(K)$, so we would like to investigate the loss in statistical accuracy for different $K$.
When $\E X_i^4<\infty$, we can use the jackknife \citep{Efron1981} to estimate $\Var(U_N)$, denoted as $N^{-1}S_{U_N}^2$ where
%\begin{align*}
	$S_{U_N}^2 = 4(N-1)(N-2)^2\sum_{i=1}^{N}\big\{(N-1)^{-1}\sum_{j\neq i}|X_i-X_j|-U_N\big\}^2$ 
%\end{align*}
%and $S_{U_N}^2$ can be expanded as
%\begin{align*}
%S_{U_N}^2 = \sigma_{\alpha,U}^2+N^{-1}\sum_{i=1}^{N}\gamma_U(X_i; F)+H_{UN},
%\end{align*}
%with $\gamma_U(X_i; F)=\left\{\alpha_U(X_i; F)\right\}^2-\sigma_{\alpha,U}^2+2g(X_i)$ and $g(x)=\E\left\{\alpha_U(X_1; F)\beta_U(x,X_1;F)\right\}$, the remainder term $H_{UN}$ satisfies $\E(H_{UN})=O(N^{-2})$ and $\E(H_{UN}^2)=O(N^{-2})$ \citep{Callaert1981}. 
which can be written in the form of (\ref{eq:T_N}) with the quadratic term absorbed in the remainder term \citep{Callaert1981}. {As an estimator of $\Var(U_N)$, $N^{-1}S_{U_N}^2$ has a bias of order $O(N^{-2})$. Because $\Var(N^{-1}S_{U_N}^2)=O(N^{-3})$, the MSE of $N^{-1}S_{U_N}^2$ is of order $O(N^{-3})$. 
\iffalse 
%For the distributed U-statistic $U_{N,K}$, we formulate a distributed jackknife variance estimator. % of the distributed Gini's mean difference $U_{N,K}$. 
Let $n_k^{-1}S_{U_{N,K}}^{(k)2}$ be the jackknife variance estimator of $U_{N,K}^{(k)}$ constructed on  the $k$-th data block. Then,  the distributed variance estimator of $U_{N,K}$ is defined as $N^{-1}S_{U_{N,K}}^2$ with $S_{U_{N,K}}^2=N^{-1}\sum_{k=1}^{K}n_kS_{U_{N,K}}^{(k)2}$. According to Theorem \ref{theo:mse1}, the MSE of $N^{-1}S_{U_{N,K}}^2$ as an estimator of $\Var(U_{N,K})$ is $O(N^{-3}+K^2N^{-4})$, which has the same leading order term as that of $N^{-1}S_{U_N}^2$ if $K=o(N^{1/2})$. Also $\MSE(N^{-1}S_{U_{N,K}}^2)$ increases at the order of $K^2$ when $KN^{-1/2}$ diverges.} %\fn{Do we need the segment in Bold ?} 
\fi 

Three distributions of $X_i$ were experimented: % in the simulations:  %, thus we consider the following three scenarios.
 (I) $\mathcal{N}(1,1)$;  % Normal distribution with mean $1$ and unit variance; \\
(II) $Gamma(3,1)$ % Gamma distribution with shape parameter $3$ and scale parameter $1$; \\
and (III) $Poisson(4)$. %: Poisson distribution with intensity parameter $4$. 
%The first two distributions are continuous while the last one is discrete. 
The true values of $\theta$ and $\Var(U_{N,K})$ can be calculated algebraically for the first two distributions. For the Poisson distribution, they can be well approximated %to a certain accuracy due to its discreteness 
\citep{Lomnicki1952}. We chose the sample size $N=100,000$.   For the sake of convenience, we randomly divided the entire dataset into blocks of equal size. % in the simulations. We use the notation $\MSE(\cdotp)$ to denote the mean square error of an estimator. %With the distributions and parameters set up above, 
%For each scenario, we generated i.i.d.~sample $\{X_i,\ldots,X_N\}$ and calculated $U_N$ % and its variance estimator $N^{-1}S_{U_N}^2$. Then for each $K\in \mathcal{K}$, we estimated $\theta$ using $U_{N, K}$ and the variance of $U_{N, K}$ by $N^{-1}S^2_{U_{N,K}}$. By carrying out $2000$ simulations, we got $\MSE(U_N)$, $\MSE(N^{-1}S^2_{UN})$ and $\MSE(U_{N, K})$, $\MSE(N^{-1}S^2_{U_{N,K}})$ for each $K\in\mathcal{K}$ under three scenarios.
%and $U_{N,K}$ for each 
The number of blocks $K\in\mathcal{K}=\{5, 10, 20, 50, 100, 200, 500, 1000, 2000, 5000\}$. %By carrying out $2000$ simulations, we got $\MSE(U_N)$ and $\MSE(U_{N, K})$ for each $K\in\mathcal{K}$ under three scenarios.
All the simulation results were based on $2000$ replications. All the simulation experiments  were conducted in R with a single Intel(R) Core(TM) i7 4790K @4.0 GHz processor.

Figure \ref{fig:mse} summarizes the empirical performance of $U_{N,K}$ and $N^{-1}S_{U_{N,K}}^2$ for the three distributions relative to those of  $U_N$ and $N^{-1}S_{U_N}^2$ based on the full sample, respectively. %%by comparing %the mean square errors of   
The figure presents the relative MSE of $U_{N,K}$, and the relative MSE, bias and variance of 
 $N^{-1}S_{U_{N,K}}^2$  with respective to their full sample counterparts. 
\iffalse 
 $U_N$ and $N^{-1}S_{U_N}^2$ more intuitively, we plot the relative MSEs of $U_{N, K}$ ($\MSE(U_{N, K})/\MSE(U_N)$) in Figure \ref{fig:mse_mean}. The relative MSEs ($\MSE(N^{-1}S_{U_{N,K}}^2)/\MSE(N^{-1}S_{U_N}^2)$), the relative absolute biases ($|\Bias(N^{-1}S_{U_{N,K}}^2)|/|\Bias(N^{-1}S_{U_N}^2)|$) and the relative variances ($\Var(N^{-1}S_{U_{N,K}}^2)/\Var(N^{-1}S_{U_N}^2)$) of $N^{-1}S_{U_{N,K}}^2$ against $K\in\mathcal{K}$ are plotted in Figure \ref{fig:mse_var}, \ref{fig:bias_var} and \ref{fig:var_var}, respectively. 
\fi 

\begin{figure}[h]
	\centering
	\subfigure{%
		\includegraphics[width=0.4\linewidth]{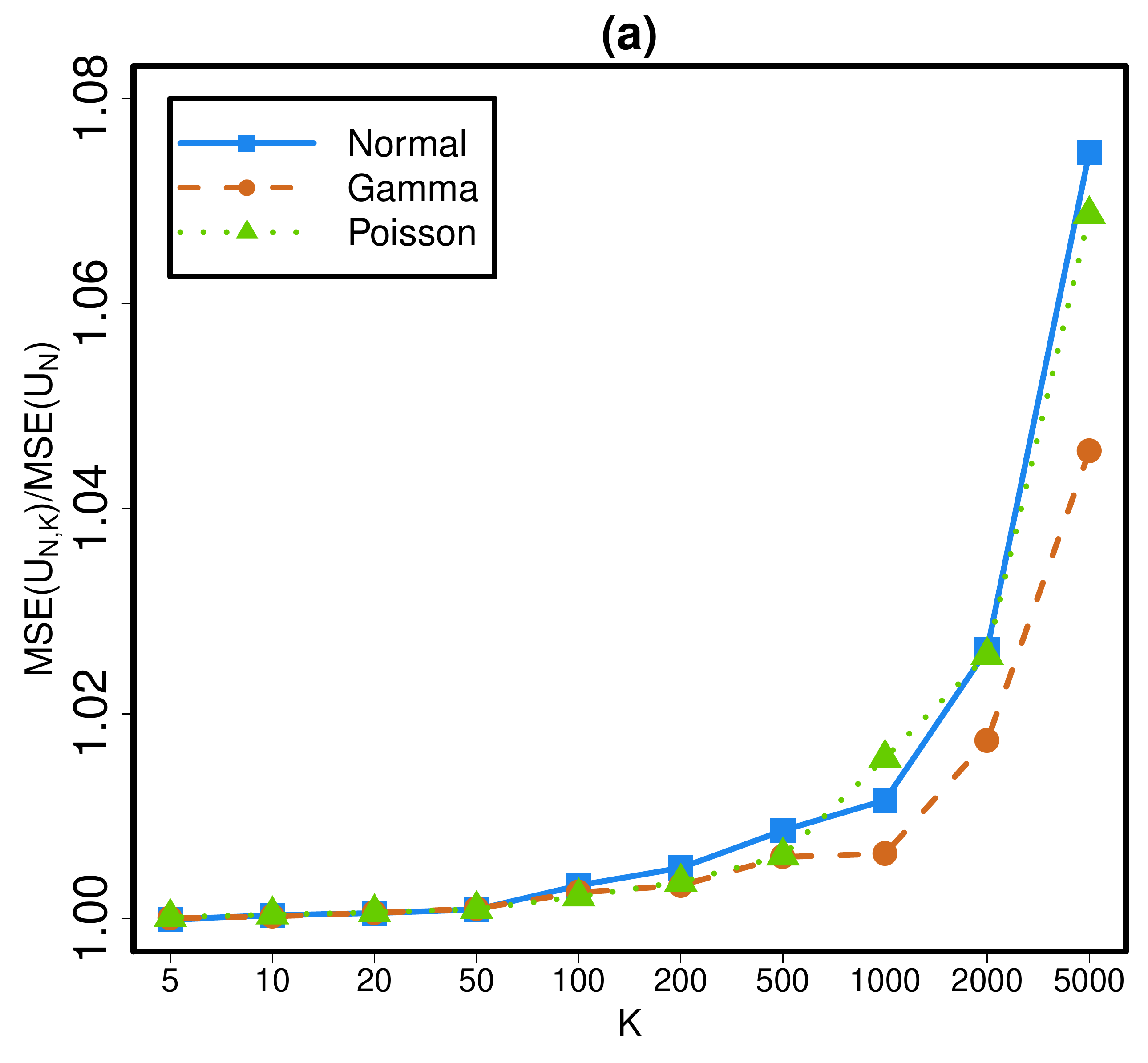}
		\label{fig:mse_mean}}
	\quad
	\subfigure{%
		\includegraphics[width=0.4\linewidth]{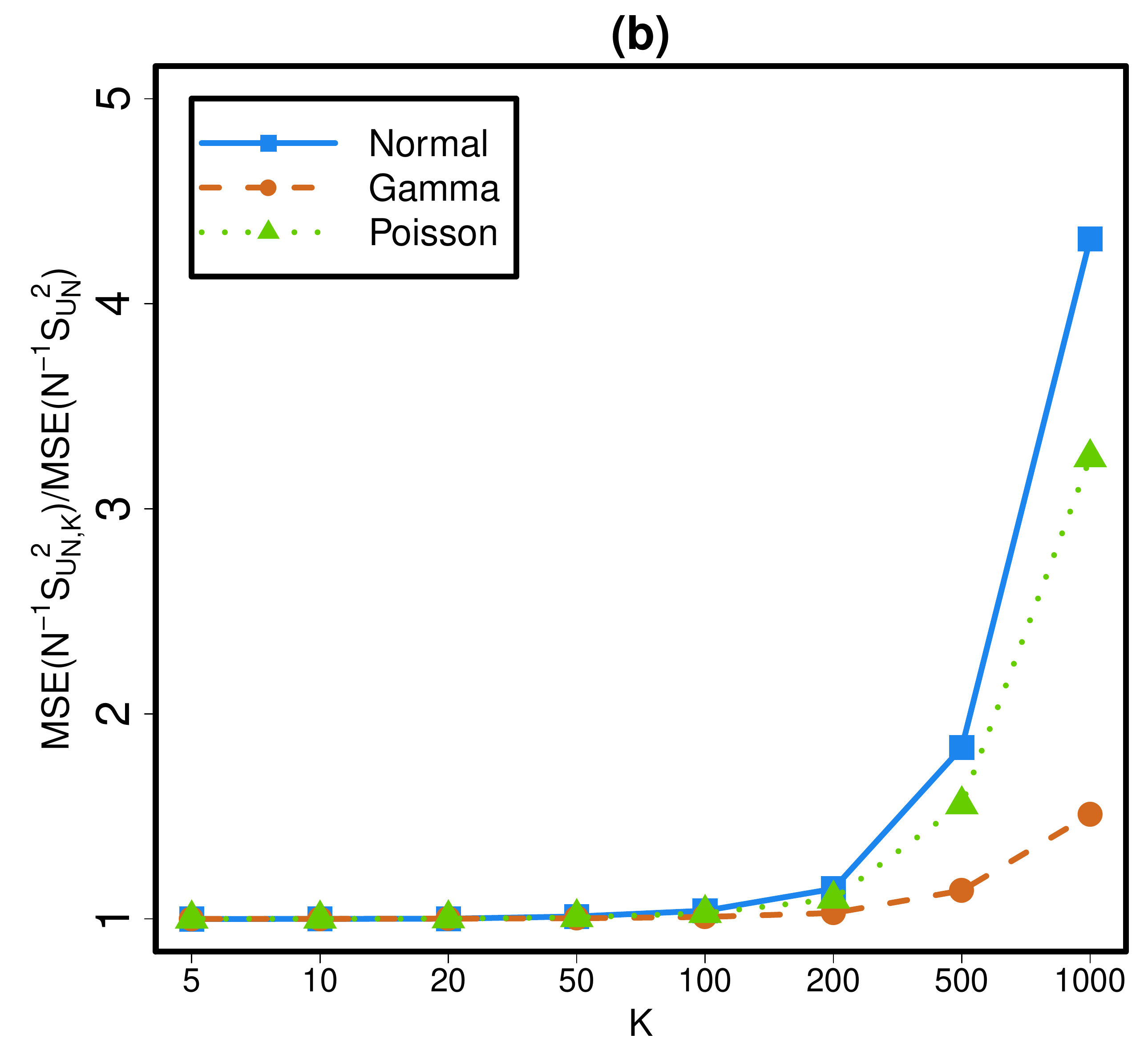}
		\label{fig:mse_var}}
	\quad
	\subfigure{%
		\includegraphics[width=0.4\linewidth]{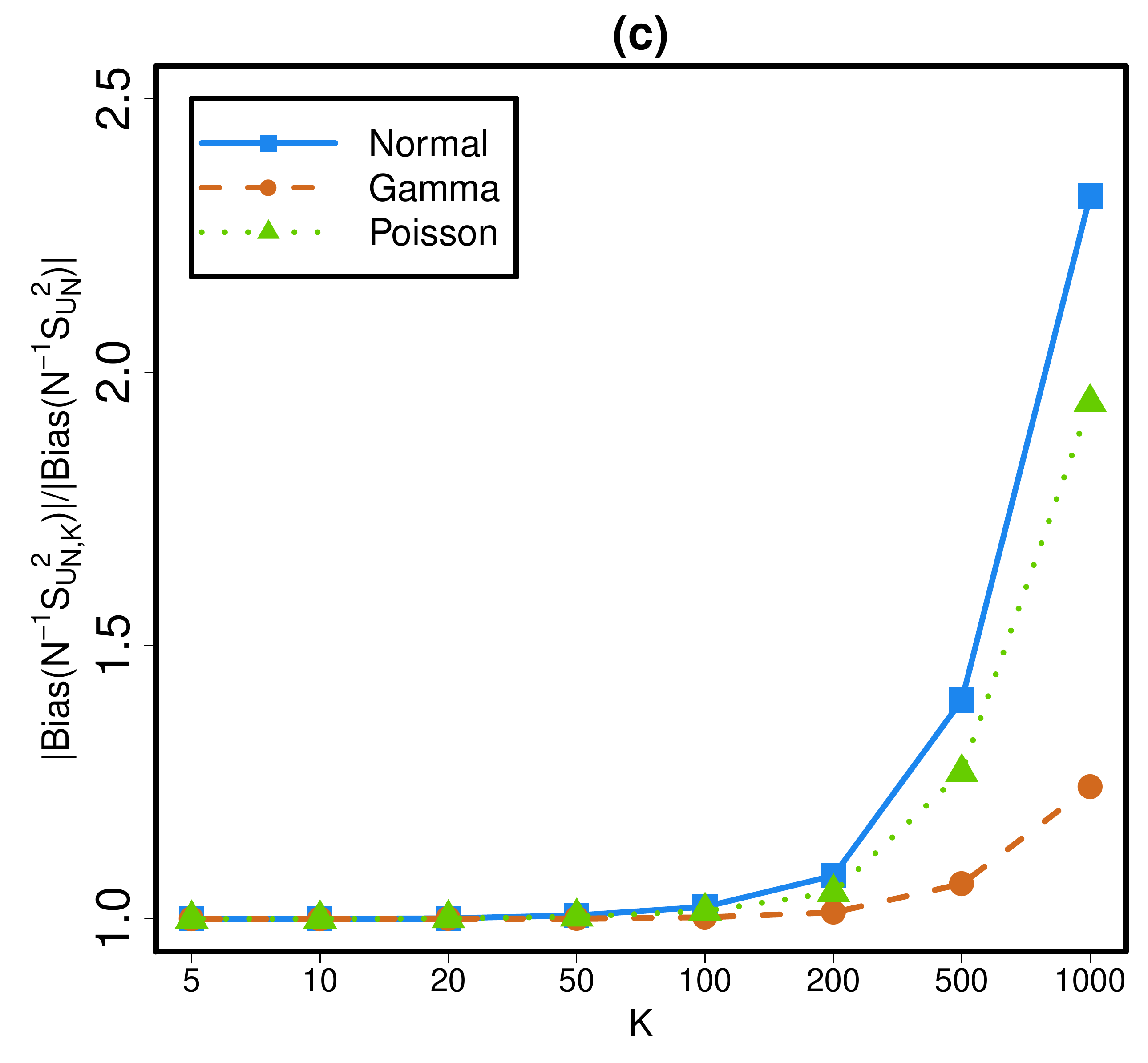}
		\label{fig:bias_var}}
	\quad
	\subfigure{%
		\includegraphics[width=0.4\linewidth]{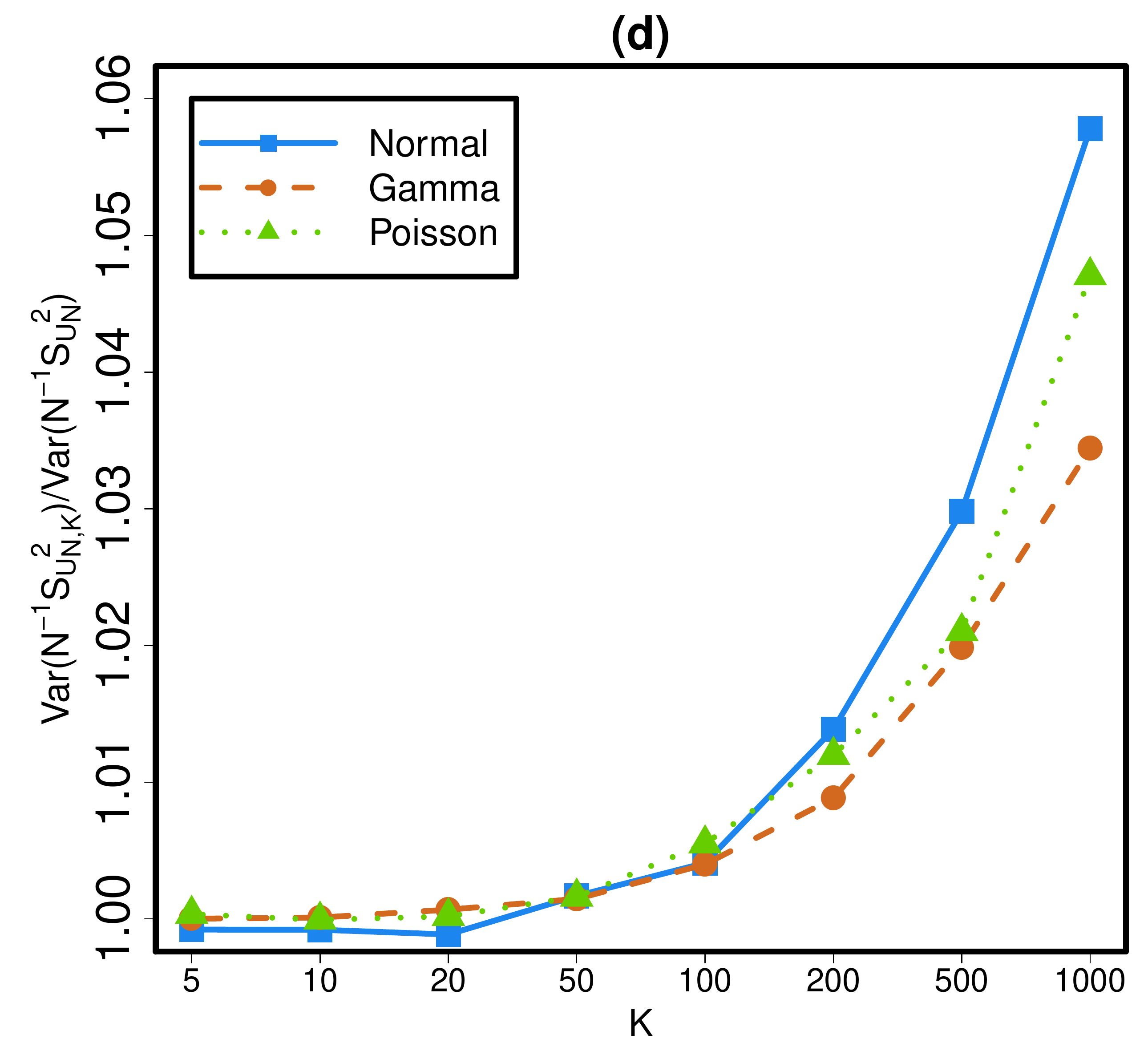}
		\label{fig:var_var}}
	\quad
	\caption{ Relative MSEs of $U_{N, K}$ (a) and  $S^2_{U_{N,K}}$  (b), absolute relative bias (c) and variance  (d) of $N^{-1}S^2_{U_{N,K}}$, to their full sample counterparts  $U_N$ and $S^2_{UN}$ with respect to $K$. % $number of blocks $K$.  %The $x$-axis represents the number of subsets $K$ and the $y$-axis gives: (a) $|\Bias(N^{-1}S_{U_{N,K}}^2)|/|\Bias(N^{-1}S^2_{UN})|$, (b) $\Var(N^{-1}S^2_{U_{N,K}})/\Var(N^{-1}S^2_{UN})$.
	}
	\label{fig:mse}
\end{figure}

Figure \ref{fig:mse_mean} shows that the relative $\MSE(U_{N, K})$  increased quite slowly as $K$ was increased before being too large. This indicated that  the distributed estimator $U_{N, K}$ was almost as good as $U_N$ for relatively small $K$, which coincided with our theoretical results that $U_{N, K}$ is unbiased and the variance increase occurs only in the second order term. %So as long as $O(N^{-1})$ is still the dominate order of the variance of $U_{N, K}$ when $K=o(N)$, the increase in the mean square errors of $U_{N, K}$ is negligible. 
From the plot we can also  see that even $K$ was as large as $5000$, %when each block only contains $20$ data points, 
the relative increases in the MSEs were all less than $10\%$ for all three distributions. Thus,  $U_{N, K}$ was a reasonable substitute of $U_N$ as an estimator of $\theta$ in these situations.   
Figure \ref{fig:mse_var} shows that %the relative empirical MSEs of $N^{-1}S^2_{U_{N,K}}$ to that of $N^{-1}S_{U_N}^2$. It shows that 
 the MSEs of 
$N^{-1}S_{U_{N,K}}^2$ was comparable to that of  $N^{-1}S_{U_N}^2$ for relatively small $K$. However, $\MSE(N^{-1}S_{U_{N,K}}^2)$ increased rapidly when $K$ was lager than $200$, 
which confirmed  the theoretical result that $\MSE(N^{-1}S_{U_{N,K}}^2)$ increases at the order of $K^2$ when $KN^{-1/2}$ diverges. In addition, Figures \ref{fig:bias_var} and \ref{fig:var_var} depict the variations of the absolute bias and variance of $N^{-1}S_{U_{N,K}}^2$, which both contribute to its mean square error. By comparing Figure \ref{fig:mse_var} to Figure \ref{fig:var_var}, we see that the increment in the mean square error of $N^{-1}S_{U_{N,K}}^2$ was mainly due to the increase in the bias when $K$ was relatively large. %\fn{Not sure this is accurate as it is bias square in the MSE. LH: It should be safe to say this when $K$ is large.} 
 %, which agrees with the theoretical findings concerning $\MSE(N^{-1}S^2_{U_{N,K}})$.

Now we turn to the performance of the distributed bootstrap (DB) and the pseudo-distributed bootstrap (PDB), and compare them with the BLB and SDB. In the simulations, we  constructed the $95\%$ equal-tailed  confidence intervals for $\theta$ based on $U_{N,K}$ with $K\in\mathcal{K}=\{20, 50, 100, 200, 500, 1000\}$. % using the distributed bootstrap, the pseudo-distributed bootstrap, the BLB and SDB. 
For each simulated dataset and $K\in\mathcal{K}$, %we use these four methods to estimate the width of confidence intervals and 
each method was allowed to run for $10$ seconds in order to mimic the fixed time budget scenario. %Under this setup, we can compare the performances of these methods within certain fixed time budget. 
For the BLB, we fixed %the number of resamples 
 $B=100$ as in \citet{Kleiner2014} and \citet{Sengupta2015}; and the $s$-th subset $\mathfrak{X}_{s,n}$ had size $N/K$. 
For the SDB, the size of %the random subset 
  $\mathfrak{X}_{s,n}^*$ was also $N/K$. Table \ref{tab:simu_iter} summarized the number of iterations completed for each method within the $10$ second budget for different $K$.  The table shows that the pseudo-distributed bootstrap (PDB) was the fastest that had the most completed iterations among the four methods. The BLB was the slowest. For $K=20$, the BLB could not finish one iteration with the time budget of $10$ seconds. The distributed bootstrap (DP) and SDB had similar performance. However, it is worth mentioning that these results did  not take the time expenditure in data communication among different data blocks into account. 
%, which is required for the SDB  since it can not be implemented distributively and can be costly. For the BLB, each iteration is totally fulfilled within each data block, so it can be implemented distributively. 

\renewcommand{\arraystretch}{1}
\begin{table}[h]
	\centering
	\caption{Number of completed iterations with respect to the block size $K$ in $10$ seconds for four bootstrap methods: the distributed bootstrap (DB), the pseudo-distributed bootstrap (PDB), the bag of little bootstrap (BLB) and the subsampled double bootstrap (SDB).}
	\label{tab:simu_iter}
	\begin{tabular}{c|cccccc}
		\hline
		& \multicolumn{6}{c}{K} \\
		& $20$ & $50$ & $100$ & $200$ & $500$ & $1000$ \\\hline
		DB & $47$ & $131$ & $265$ & $513$ & $1067$ & $1569$ \\
		PDB & $5000+$ & $5000+$ & $5000+$ & $5000+$ & $5000+$ & $5000+$ \\
		BLB & $0$ & $1$ & $3$ & $5$ & $11$ & $16$ \\
		SDB & $51$ & $128$ & $249$ & $472$ & $1015$ & $1436$ \\\hline
	\end{tabular}
\end{table}

We next evaluate the inference offered by these four bootstrap methods under a  fixed time budget of 10 seconds.  We tried to run $500$ simulation replications which produced the coverage probabilities and widths of the $95\%$ confidence intervals for $\theta$ in Table \ref{tab:simu_cr_gau} for the Gaussian scenario, while the results for Gamma and Poisson distributions can be found in Tables \ref{tab:simu_cr_gam} and \ref{tab:simu_cr_pois} in the supplementary material. 
Table \ref{tab:simu_cr_gau} shows that there was some under-coverage for the confidence intervals by all four methods %were a little bit far away from the nominal confidence level $95\%$ 
 for relatively small $K$. For $K=20$, the smallest number of data blocks considered in the simulation, the reason for  the DP and SDB not having good coverage performance  was that they completed too few bootstrap iterations 
  to ensure good accuracy. As the pseudo-distributed bootstrap (PDB) is based on the assumption that $K$ diverges, the coverage probability was not good when $K=20$ even it can complete enough iterations in $10$ seconds. As the number of subsets  $K$ increased, and the computational burden was alleviated, the performances of the distributed bootstrap (DP), PDB and SDB all got better with comparable to coverage and width. The best coverage appeared to be reached at $K=100$. BLB could  not produce any result for $K=20$ as it could not complete a single bootstrap run. For larger $K$, the coverage probabilities of the BLB were slightly less comparable to the other three methods. %\fn{I see the coverage and width were largely similar to the other three. LH: the performance of BLB was slightly less comparable to the other there methods.}  
  It is interesting to see that the PDB was able to take advantage of its speedy computation to complete more runs of the bootstrap resampling that led to its producing very comparable coverage levels and width of the confidence intervals.  
	%We would like to mention that the performance of the BLB depends on the choices of the number of resamples $B$ and the subset size $n$, and it is hard to choose them optimally with a fixed time budget in practice.
%	\fn{I think every method require such choice. Let us do not put too much %pressure on BLB as the paper will be revewied by some BLB person. The same %is valid on SDB. Let keep it more subtle. Let us remove this sentence.  LH: No %problem, removed.} 

\renewcommand{\arraystretch}{1} 
\begin{table}[h]
	\centering
	\caption{Coverage probabilities and widths (in parentheses) of the $95\%$ confidence intervals of the four bootstrap methods with $10$ seconds time budget for the Gaussian data.}
		\begin{tabular}{c|cccccc}
			\hline
			& \multicolumn{6}{c}{K} \\
			& $20$ & $50$ & $100$ & $200$ & $500$ & $1000$ \\\hline
			DB & $0.918$ & $0.934$ & $0.940$ & $0.926$ & $0.938$ & $0.936$ \\
			& $(0.00960)$ & $(0.00981)$ & $(0.00991)$ & $(0.00993)$ & $(0.00994)$ & $(0.00993)$ \\
			\hline
			PDB & $0.918$ & $0.940$ & $0.944$ & $0.936$ & $0.940$ & $0.938$ \\
			& $(0.00962)$ & $(0.00989)$ & $(0.00994)$ & $(0.00998)$ & $(0.01000)$ & $(0.01004)$ \\
			\hline
			BLB & NA & $0.928$ & $0.932$ & $0.934$ & $0.934$ & $0.930$ \\
			& (NA) & $(0.00958)$ & $(0.00967)$ & $(0.00973)$ & $(0.00982)$ & $(0.00978)$ \\
			\hline
			SDB & $0.916$ & $0.932$ & $0.942$ & $0.938$ & $0.940$ & $0.940$ \\
			& $(0.00965)$ & $(0.00984)$ & $(0.00990)$ & $(0.00996)$ & $(0.00998)$ & $(0.00998)$ \\\hline
		\end{tabular}
	\label{tab:simu_cr_gau}
\end{table}

Next we evaluate  the relative errors of the confidence interval widths for each method. Let  $d$ be the true width,  $\hat{d}$ be the width of the $95\%$ confidence interval by one of the four methods, and the relative error is $|\hat{d}-d|/d$. 
{ The exact width $d$ was obtained by $5000$ simulations for each distribution.} 
% we got simulated upper and lower $2.5\%$ quantiles of the empirical distributions of $U_{N,K}$, and the exact width $d$ was approximated by the differences between the simulated upper and lower $2.5\%$ quantiles. %We approximated the true width $d$ by generating $5000$ realizations of datasets from each distribution. 
The relative errors were averaged over the $500$ simulation replications. 
% of the 10 second budget excercise.%\fn{not really 500 as some method cannot reach it.  As we have a time %budget, do we still need mentioning it ? LH: the $500$ here means simulation %replications. Corrected in the text.}  
Follow the strategy in \citet{Sengupta2015}, these four methods are compared with respect to the time evolution of relative errors {under the fixed budget of 10 seconds}. This was to explore which method can produce more precise result given a fixed time budget. 
%For each method, we calculated its relative error for each iteration along with the computing time.\fn{delete this sentence ?}  
For the BLB and SDB, one iteration means the completion of estimation procedure for one subset. 
%\fn{not the same for DB and PDB ? LH: Not really, for DB and PDB, one iteration means the generation of one bootstrap copy of $T_{N,K}$.} 
 The relative error was initially  assigned to be $1$ before the first iteration was completed. %However, during the first iteration of the BLB, we can use the results from the resamples that were processed to get the estimators of the confidence interval width. This is different with the way how \citet{Sengupta2015} presents the results of the BLB.

Figure \ref{fig:gau_ciw} displays the evolution of the relative errors in the widths of the confidence intervals with respect to time for the four methods  
 with different number block size $K$ for Gaussian $\mathcal{N}(1,1)$ data. We note that the pseudo-distributed bootstrap (PDB) was the fastest method to converge for all $K$.  
BLB's relative error was severely affected by its rather limited completion rates of the bootstrap iteration  when $K =20, 50$ and $100$. %, for example, when $K=20$, BLB can not even finish one iteration in $10$ seconds. 
When $K=20$, %the relative error of the pseudo-distributed bootstrap is not reliable which means it can not produce a reasonable estimate of the confidence interval width in this case. 
the distributed bootstrap (DB) and SDB had smaller relative errors than the PDB after $7$ seconds.
This is expected as the convergence rate of the PDB relies on a larger $K$. However, as $K$ was increased to larger than 200, the relative errors of the PDB quickly decreased to an acceptable rate and became comparable to those of the DB and SDB towards the 10 seconds. %For a sufficient large $K$ ($K\geq 200$), PDB's  relative error was comparable to those of the other three methods toward the end of the time budget. %distributed bootstrap, the BLB and SDB. 
DB and SDB had similar performance and they could produce stable results with a sufficient time budget. %However, the SDB is not a distributed resampling approach, which would limit its usage when the entire data are stored in different locations. 
Results for Gamma and Poisson scenarios were similar and are reported in Figures \ref{fig:gam_ciw} and \ref{fig:pois_ciw} in the supplementary material.

\begin{figure}[htp]
	\centering
	\subfigure{%
		\includegraphics[width=0.45\linewidth]{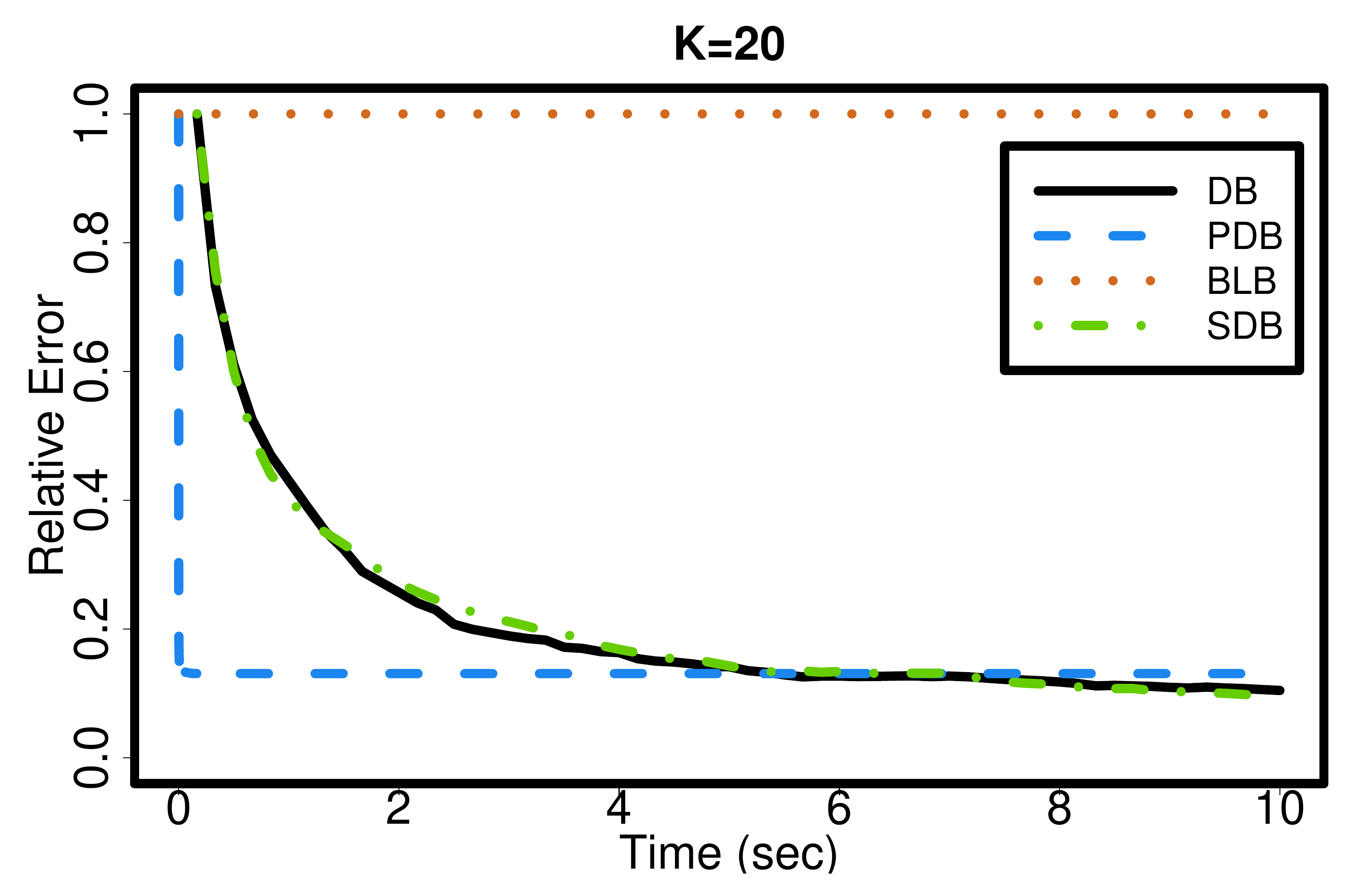}
		\label{fig:gau_K_20}}
	\quad
	\subfigure{%
		\includegraphics[width=0.45\linewidth]{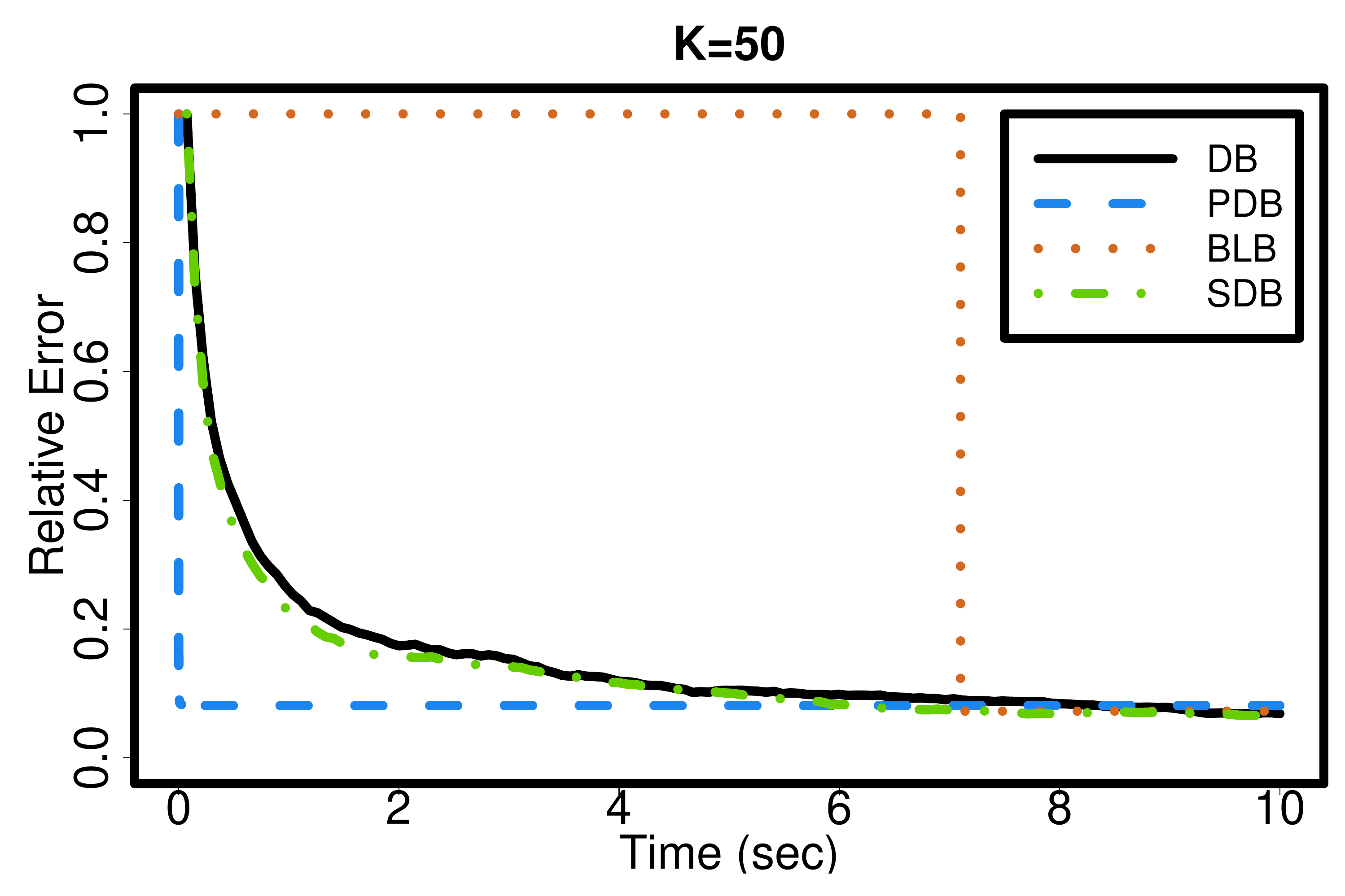}
		\label{fig:gau_K_50}}
	\quad
	\subfigure{%
		\includegraphics[width=0.45\linewidth]{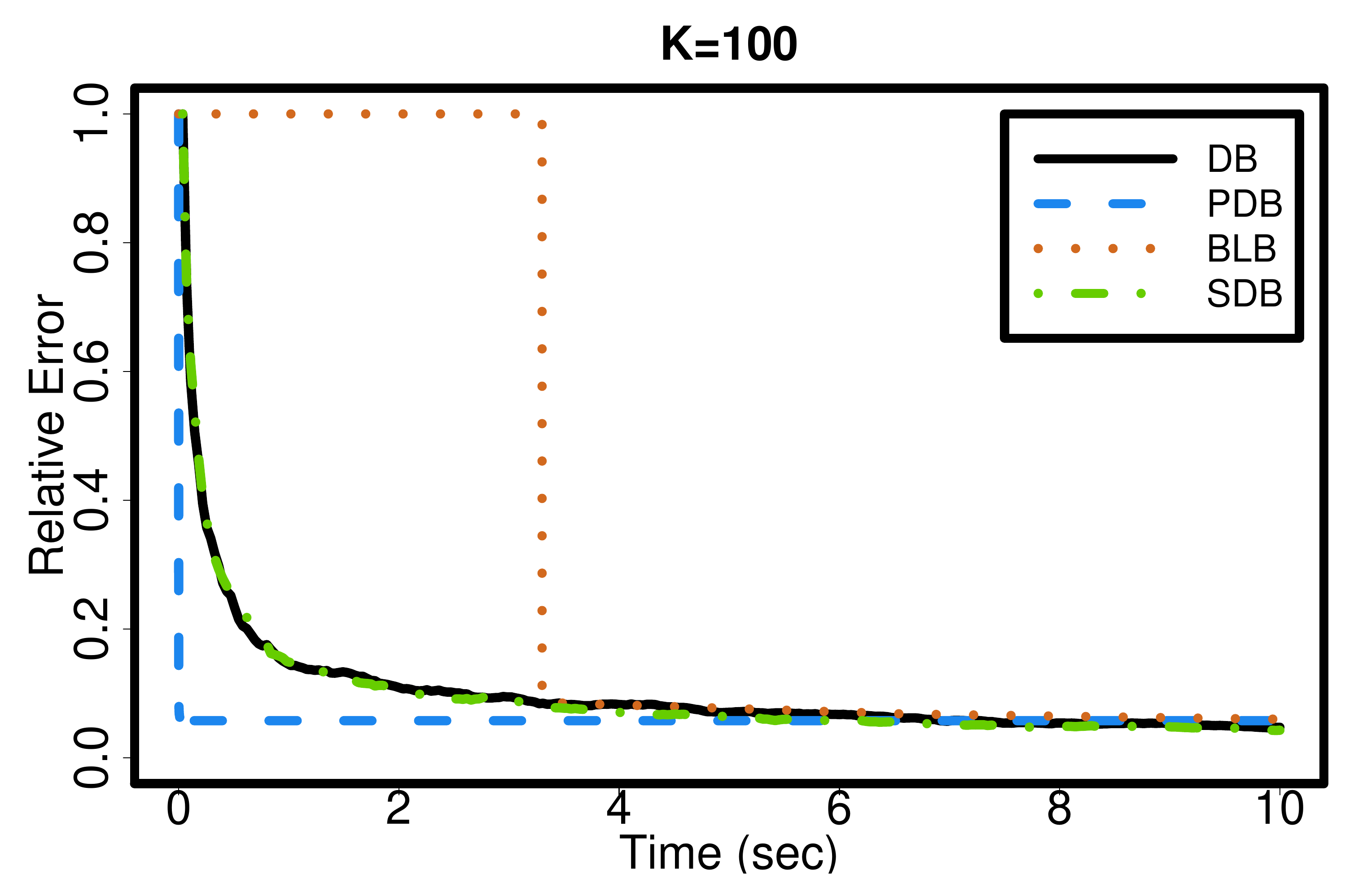}
		\label{fig:gau_K_100}}
	\quad
	\subfigure{%
		\includegraphics[width=0.45\linewidth]{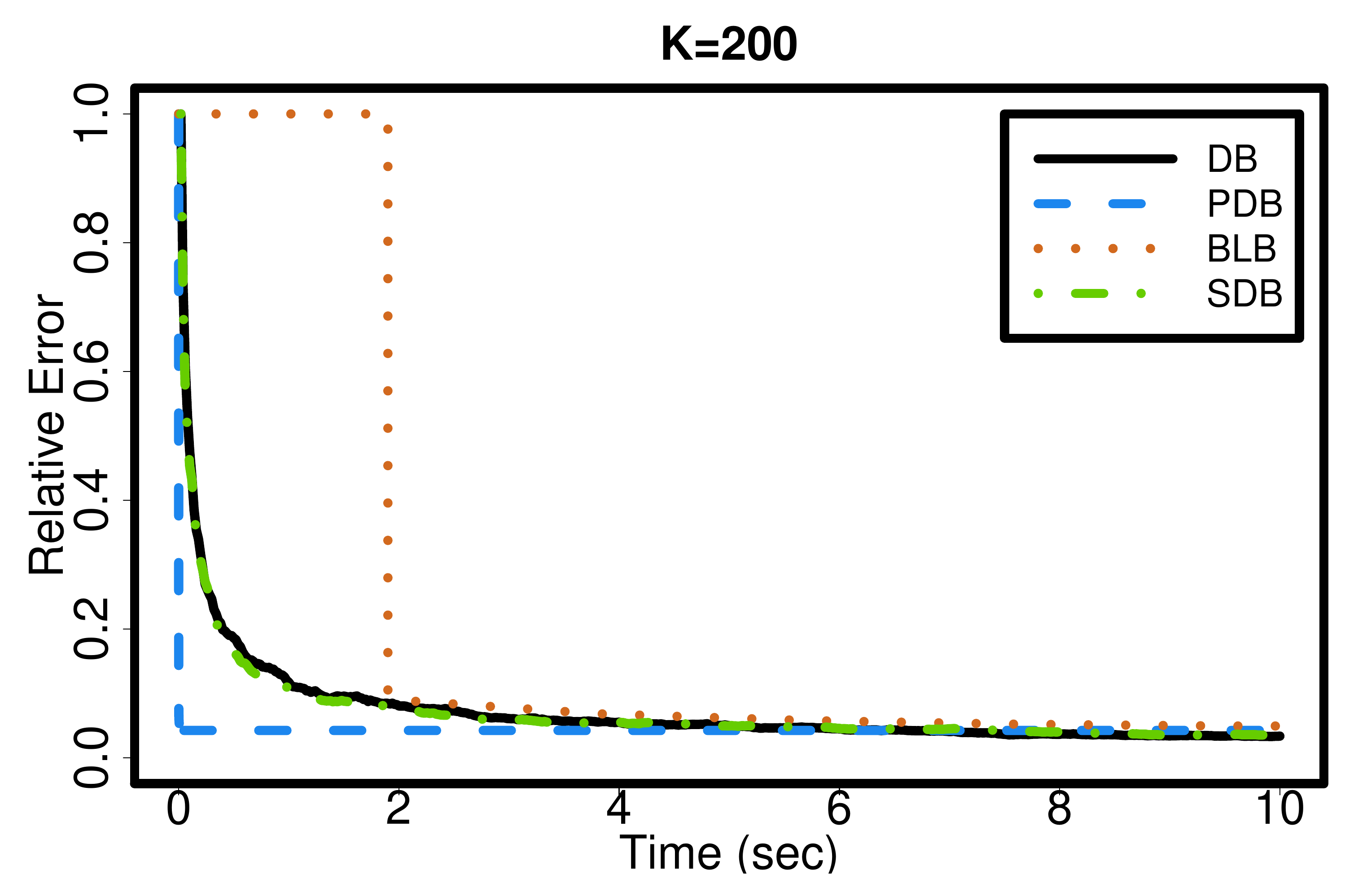}
		\label{fig:gau_K_200}}
	\quad
	\subfigure{%
		\includegraphics[width=0.45\linewidth]{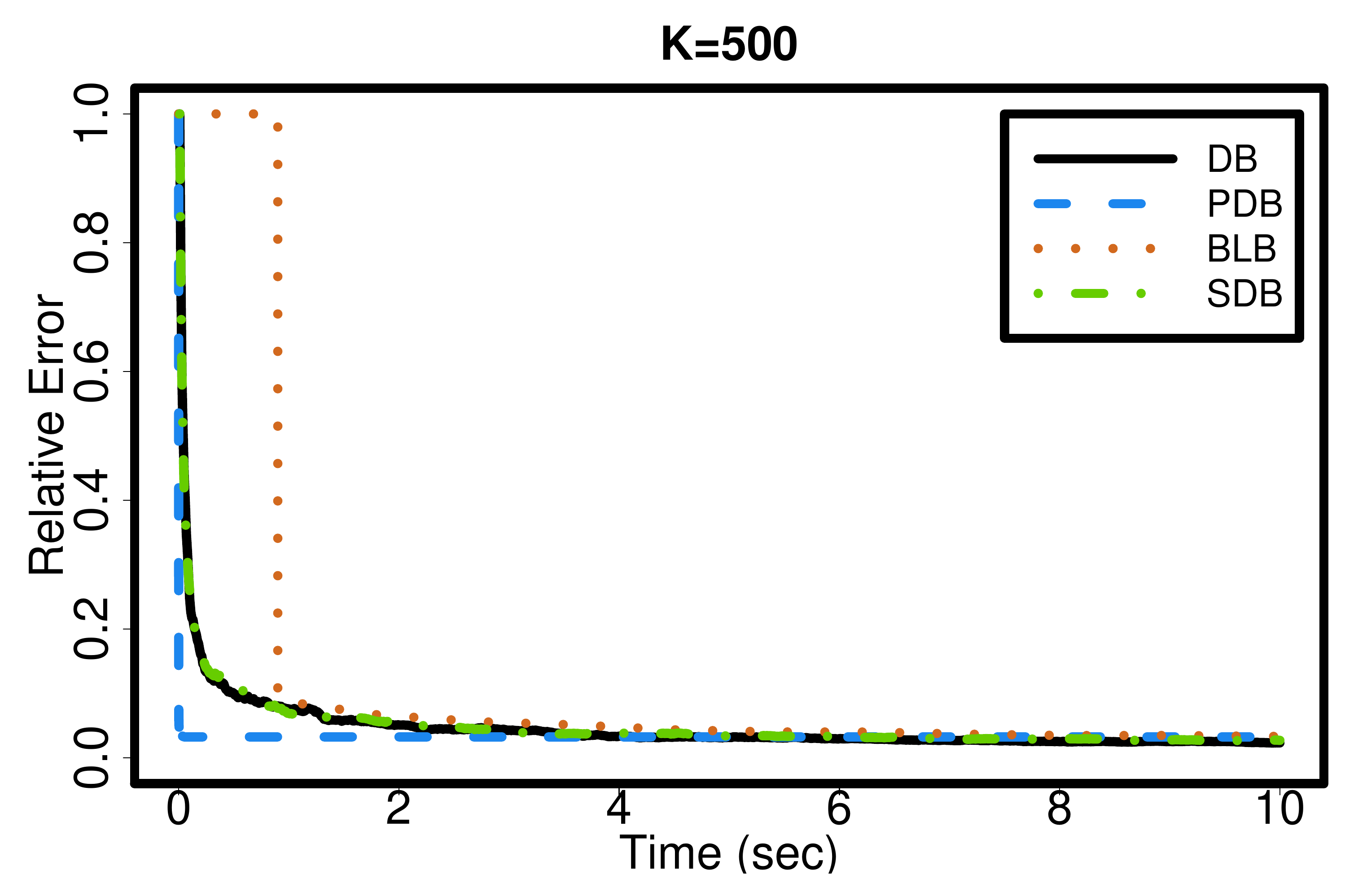}
		\label{fig:gau_K_500}}
	\quad
	\subfigure{%
		\includegraphics[width=0.45\linewidth]{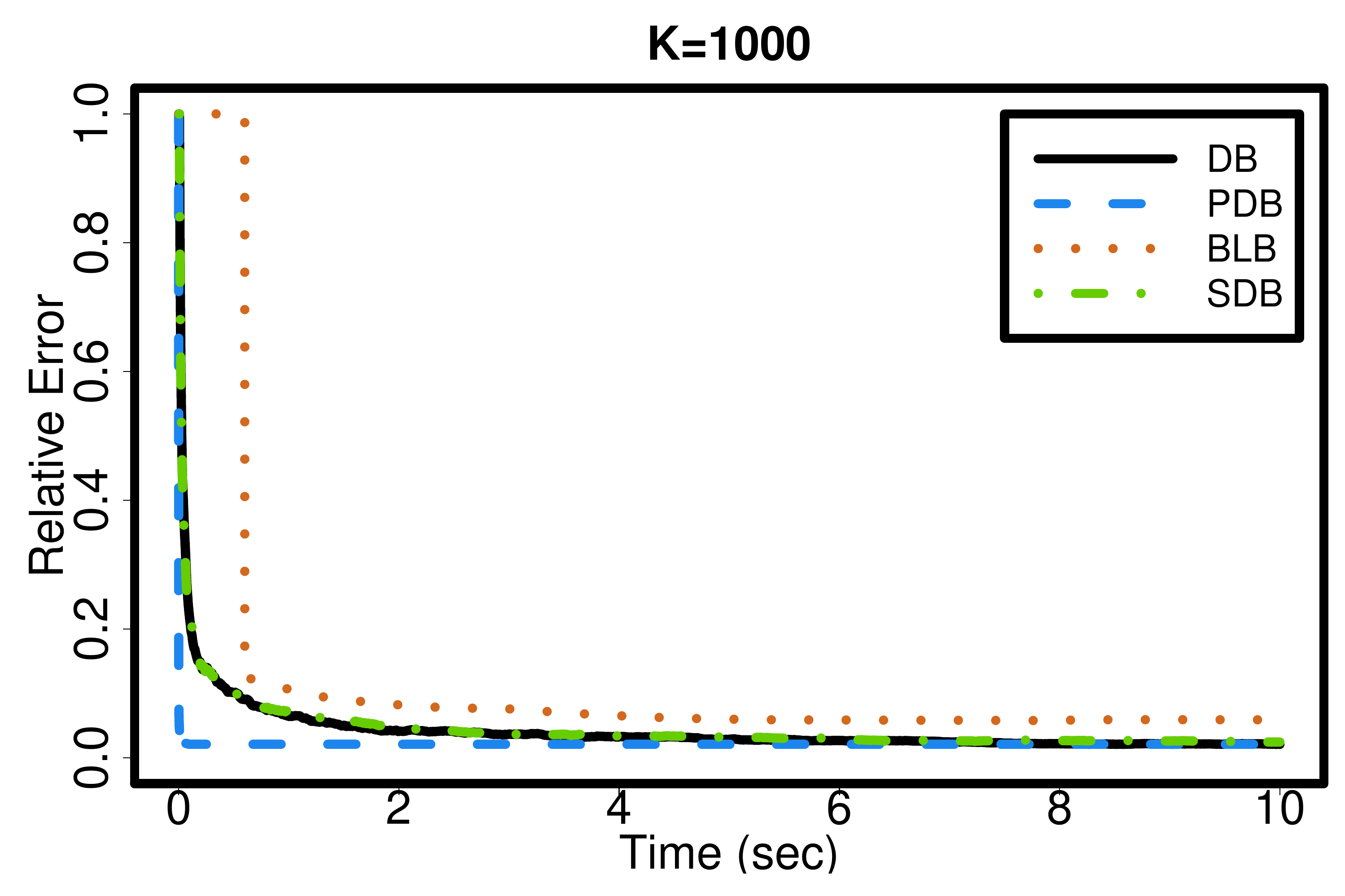}
		\label{fig:gau_K_1000}}
	\quad	
	\caption{Time evolution of the relative errors $|\hat{d}-d|/d$ in the width of the confidence interval under the Gaussian scenario with respect to different black size. DB: the distributed bootstrap (solid lines); PDB: the pseudo-distributed bootstrap (dashed lines); BLB: the bag of little bootstrap (dotted lines); SDB: the subsampled double bootstrap (dot-dashed lines).}
	\label{fig:gau_ciw}
\end{figure}

In conclusion, with a small time budget, the pseudo-distributed bootstrap (PDB) had better performance than the other three resampling strategies. When $K$ is relatively large, PDB has advantage in computing and can produce reasonable estimators. 
However, the convergence  of PDB is limited by the order of $K$. With a sufficient time budget, the distributed bootstrap (DB) and SDB can work for a wide range of $K$ and obtain better estimator than the pseudo-distributed bootstrap (PDB) in some situation.
%The SDB has limitation that it can not be implemented distributively. The distributed bootstrap (DB) is designed distributively and can work for a wide range of $K$.
%\footnote{The overall strategy is: if we have a small time budget, it is better to use the pseudo-distributed bootstrap as it can at least ensure convergence. If we have sufficient time, we can use the distributed bootstrap.}

\section{Real data analysis}

In this section, we analyze an airline on-time performance data to illustrate the proposed distributed inference for massive data. The data are available from the \textit{Bureau of Transportation Statistics} website (\url{https://www.bts.gov/}), which consist of the arrival and departure details for $5,617,658$ passenger flights within the USA in $2016$. We are interested in the arrival delay (ARR\_DELAY) variable which is the difference in minutes between the scheduled and the actual arrival times. 
After removing missing values, which corresponded to the canceled flights, there are $5,538,145$ flight entries in the dataset.

Each sample point %in the dataset 
 consists of $10$ features that are potentially related to the arrival delay. These features along with descriptions of their meanings are listed in Table \ref{tab:features} in the supplementary material. %\fn{let move Table 4 to the SM ba.} . 
%The first five features are only available after the completion of the flight. %However, it is still of interest to study their relationships with the arrival delay. 
It is expected that departure delay (DEP\_DELAY) tends to result in late arrival. Hence, the variable DEP\_DELAY should be highly correlated with ARR\_DELAY. 
The variable DIF\_ELAPS is the difference between actual (ACT\_ELAPS) and scheduled elapsed time (CRS\_ELAPS) in the computer reservation system (CRS). Thus, DEP\_DELAY and DIF\_ELAPS together determine the ARR\_DELAY. 
%Thus we introduce one more feature: DIF\_ELAPS, which is defined as the difference between ACT\_ELAPS and CRS\_ELAPS, that is,
%\begin{align*} 
%\text{DIF\_ELAPS} = \text{ACT\_ELAPS} - \text{CRS\_ELAPS}.
%\end{align*}
%Clearly, DEP\_DELAY and DIF\_ELAPS together determine the ARR\_DELAY variable exactly. 
One question is, how much of the arrival delay is due to the departure delay or the difference between actual and scheduled elapsed time? In addition, 
% CRS\_ELAPS records the scheduled elapsed time between the departure from the originating airport gate and arrival of the flight at the destination airport gate. 
TAXI\_OUT records the time between departure from the gate and wheels off, while TAXI\_IN is the time differences between wheels down and arrival at the gate. DISTANCE provides the flight distance, while MONTH, DAY\_OF\_WEEK and DAY\_OF\_MONTH provide month and date information of each flight.

Our goal is to find out  which variables were more dependent with ARR\_DELAY %of flight, and among those features, which of them are more influential to the arrival delay than the others for different destination airports.
%These issues are of practical interest. For example, when buying a flight ticket, what should be the first few concerns to avoid late arrival? 
and to characterize the dependence  with respect to different airports.
We only considered the airports with at least $200,000$ flight arrivals in $2016$. This led to four airports: ATL (Atlanta International Airport), ORD (Chicago O'Hare), DEN (Denver) and LAX (Los Angeles), which had $381,166$,  $238,994$, $222,121$, $210,706$ flight arrivals in 2016, respectively. 

We use the distance covariance introduced in \citet{SRB2007} to quantify the dependence. Suppose $\bY$ and $\bZ$ are two random vectors having finite first moments and taking values in $\mathbf{R}^p$ and $\mathbf{R}^q$, respectively. The population distance covariance between $\bY$ and $\bZ$ is
$$dcov^2(\bY,\bZ)=\int_{\mathbf{R}^{p+q}}\| \phi_{\bY,\bZ}(\bt,\bs)-\phi_{\bY}(\bt)\phi_{\bZ}(\bs) \|^2\omega(\bt,\bs)d\bt d\bs,$$
where $\phi_{\bY}(\bt)$, $\phi_{\bZ}(\bs)$ and $\phi_{\bY, \bZ}(\bt,\bs)$ are the characteristic functions of $\bY$, $\bZ$ and $\bX=(\bY^T, \bZ^T)^T$, respectively, and $\omega(\bt,\bs)=(c_pc_q\|\bt\|_p^{1+p}\|\bs\|_q^{1+q})^{-1}$ is a weight function with $c_d=\pi^{(1+d)/2}/\Gamma\{(1+d)/2\}$,  $\Gamma$ is the Gamma function. It is clear that $dcov^2(\bY,\bZ)$ equals zero if and only if $\bY$ and $\bZ$ are independent. Suppose $\mathfrak{X}_N=\{\bX_1, \ldots, \bX_N\}$ is a sample from $F$, where $\bX_i=(\bY_i^T, \bZ_i^T)^T$ for $i=1,\ldots,N$. Define $A_{ij}=\|\bY_i-\bY_j\|_p$ and $B_{ij}=\|\bZ_i-\bZ_j\|_q$ for $i,~j=1,\ldots,N$. Then, the %$\mathcal{U}$-centered 
empirical distance covariance \citep{SR2014} is
\begin{align*}
dcov_N^2(\bY,\bZ)= & \left\{N(N-1)\right\}^{-1}\sum_{i\neq j}A_{ij}B_{ij}+\left\{N(N-1)(N-2)\right\}^{-1}\sum_{i\neq j\neq l_1}A_{ij}B_{il_1} \\
& + \left\{N(N-1)(N-2)(N-3)\right\}^{-1}\sum_{i\neq j\neq l_1\neq l_2}A_{ij}B_{l_1l_2},
\end{align*}
which is a U-statistic of degree $4$ and an unbiased estimator of $dcov^2(\bY,\bZ)$.

Because of the sizes of these datasets, the distance covariance $dcov_N^2(\bY,\bZ)$ encounters heavy computational burden. We utilized the distributed version of the distance covariance, see Section \ref{sec:dist_covar} in the supplementary material for details. We divided the data of each airport randomly into $K$ subsets of equal size where $K$ was chosen in a set $\mathcal{K}=\{50, 100, 200, 500\}$. As discussed in the supplementary material, for two random vectors $\bY$ and $\bZ$, $\hat{\sigma}_{N,K}^{-1}dcov_{N,K}^{2}(\bY,\bZ)$ is asymptotic normal with unit variance regardless %$dcov_{N,K}^2(\bY,\bZ)$ is degenerate or not. 
$\bY$ and $\bZ$ are independent or not. Here, $dcov_{N,K}^{2}(\bY,\bZ)$ is the distributed distance covariance based on $K$ subsets, and $\hat{\sigma}_{N,K}^{2}$ is a consistent estimator of $\Var\{dcov_{N,K}^{2}(\bY,\bZ)\}$. %, where $\hat{\sigma}_{PDB}^{2}$ is the pseudo-distributed bootstrap variance estimator of $dcov_{N,K}^{2}(\bY,\bZ)$. 
So, $\DM(\bY,\bZ)=: \hat{\sigma}_{N,K}^{-1}dcov_{N,K}^{2}(\bY,\bZ)$ can be used to measure the dependence between $\bY$ and $\bZ$. 
% Denote $\DM(\bY,\bZ)=\hat{\sigma}_{N,K}^{-1}dcov_{N,K}^{2}(\bY,\bZ)$ as the dependence measure between $\bY$ and $\bZ$.
The larger $\DM(\bY,\bZ)$ is, the more dependence between $\bY$ and $\bZ$ is. Also we can compare $\DM(\bY,\bZ)$ with the critical values from the standard normal distribution to test for the significance of the dependence between $\bY$ and $\bZ$.

Figure \ref{fig:real_data_airport} reports the standardized dependence measures between the arrival delay and each of the $10$ feature variables for the four airports with the block size $K\in\mathcal{K}$.
%For each airport, we computed the standardized dependence measures between ARR\_DELAY and each of the $10$ features with respect to the blocking size $K\in\mathcal{K}$, which are displayed %. The results for each airport with different $K$ are plotted 
%n Figure \ref{fig:real_data_airport}. %, in which the horizontal red solid lines mark the $0.1\%$ critical value of the standard normal distribution. 
It shows that all test statistics were above the $0.1\%$ critical value lines, which indicated significant dependence between the arrival delay and the ten features for the four airports.  Here, we deliberately use a rather small critical value to account for any multiplicity in the testing. 
%Next, we compare the dependence between ARR\_DELAY and each of the ten features. %, both within and between different airports. 
Figure \ref{fig:real_data_airport} clearly shows  that the amount of dependence as quantified by $\DM(\mathrm{ARR\_DELAY}, \bZ)$ for $\bZ$ being the ten features were very consistent across different $K$ values. The largest variation with respect to $K$ was observed for the CRS\_ELP and Distance for Atlanta. 

%This is appealing because a wide range of different $K$ would engender similar results when we are interested in comparing these features with each other, and choosing a larger $K$ could benefit us more in computing. %Moreover, for most cases, the dependence measures decrease as the number of data blocks $K$ grows, and this agrees with our theoretical results.
This implied that the distributed inference would produce stable results with respect to different $K$. This is attractive as using a larger $K$ benefits the computing.
%After Bonferroni correction, the adjusted p-values are plotted in Figure \ref{fig:real_data_pvalue}. 
%For the distributed distance covariance $dcov_{N,K}^{2}(\mathrm{ARR\_DELAY},\bZ)$ for $\bZ$ being each of the ten features, its estimated standard deviation $\hat{\sigma}_{N,K}$ tended to be slightly larger for a larger $K$.
%\begin{figure}
%	\centering
%	\includegraphics[width=0.7\linewidth]{real_data/real_data_p_value}
%	\caption{Adjusted p-values of testing independence for each airport with $K=500$. DE: DIF\_ELAPS, DD: DEP\_DELAY, TO: TAXI\_OUT, TI: TAXI\_IN, AE: ACT\_ELAPS, DIS: DISTANCE, MON: MONTH, DOM: DAY\_OF\_MONTH, DOW: DAY\_OF\_WEEK, CE: CRS\_ELAPS.}
%	\label{fig:real_data_pvalue}
%\end{figure}

\begin{figure}[h]
	\centering
	\subfigure{%
		\includegraphics[width=0.47\linewidth]{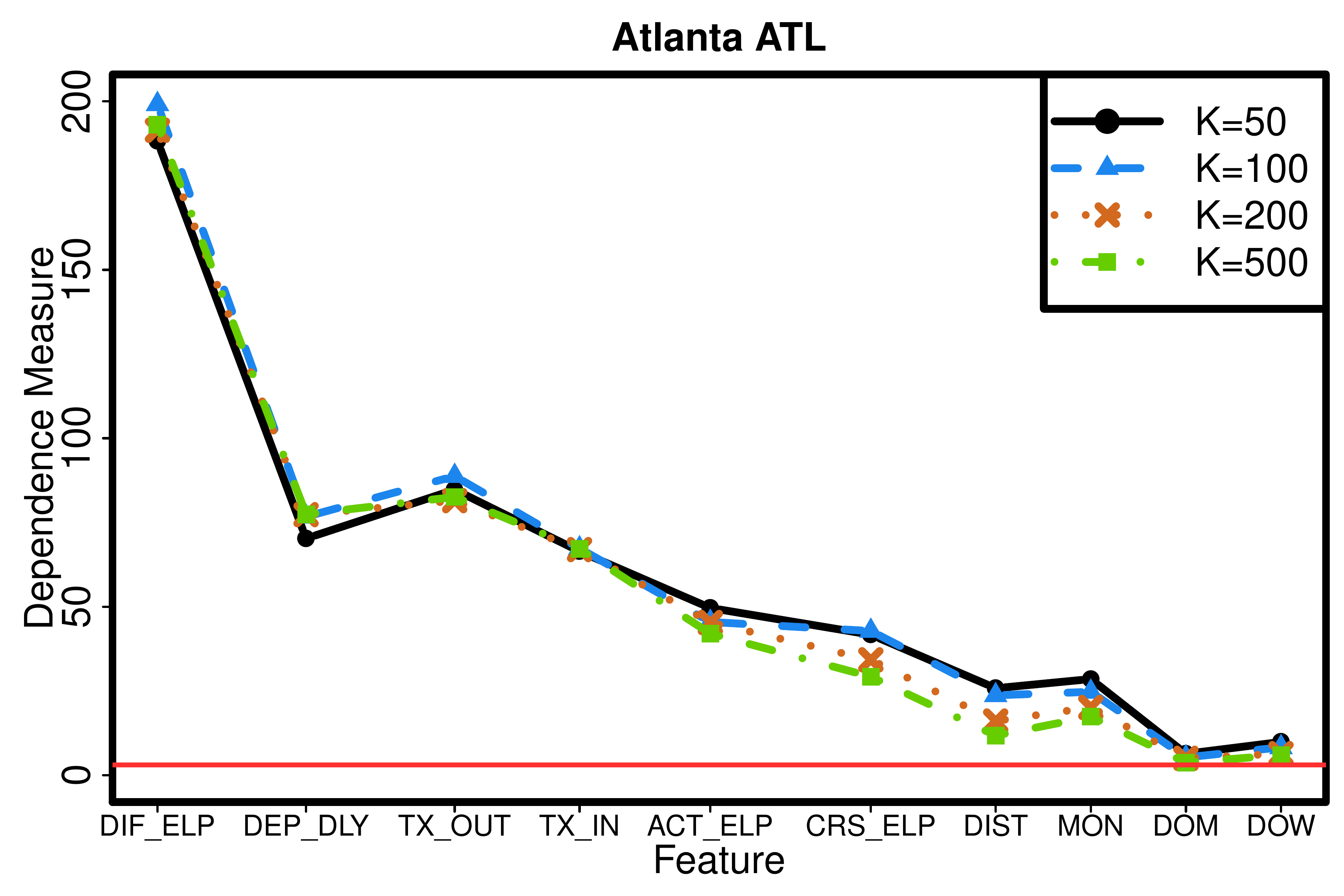}
		\label{fig:ATL}}
	\quad
	\subfigure{%
		\includegraphics[width=0.47\linewidth]{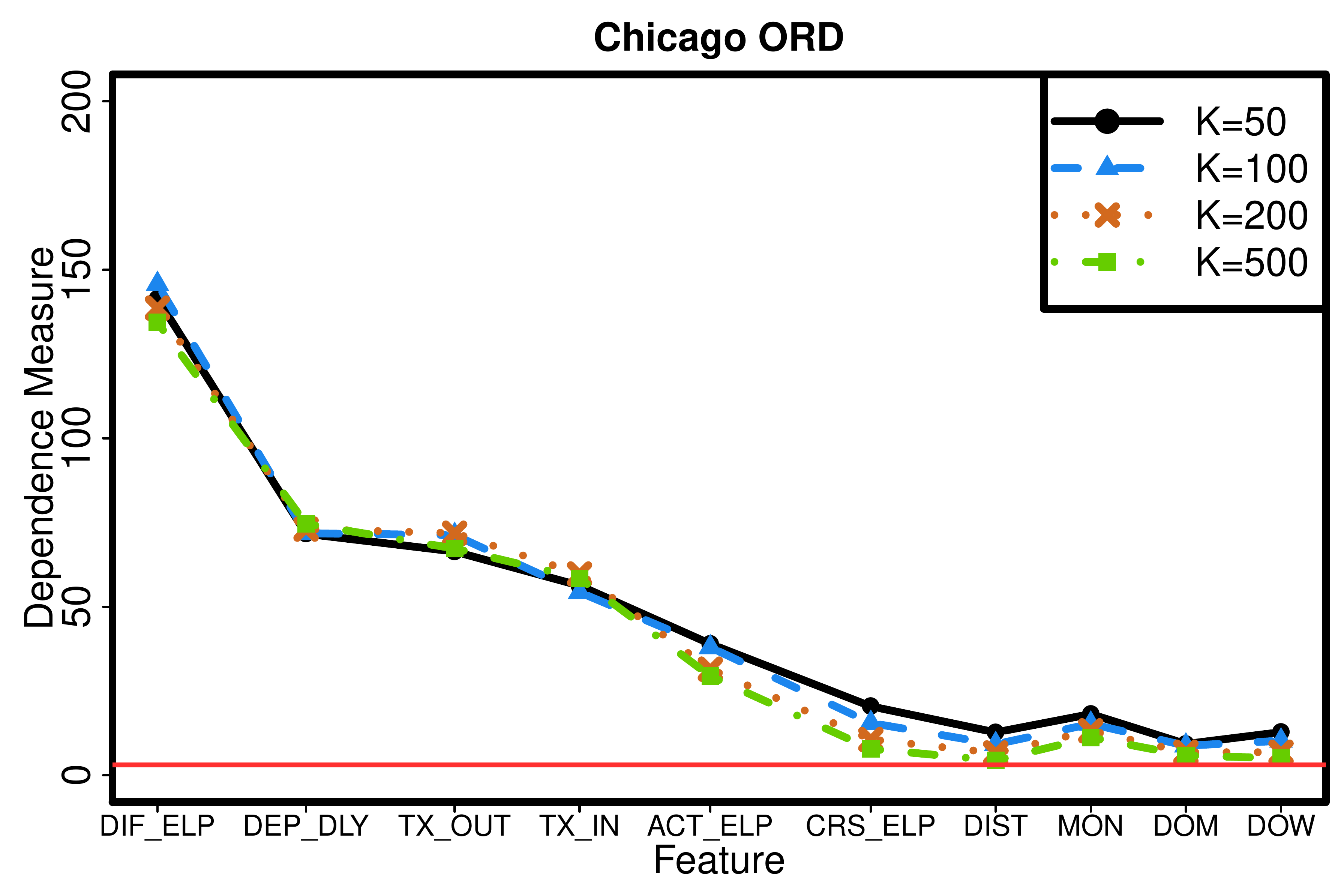}
		\label{fig:ORD}}
	\quad
	\subfigure{%
		\includegraphics[width=0.47\linewidth]{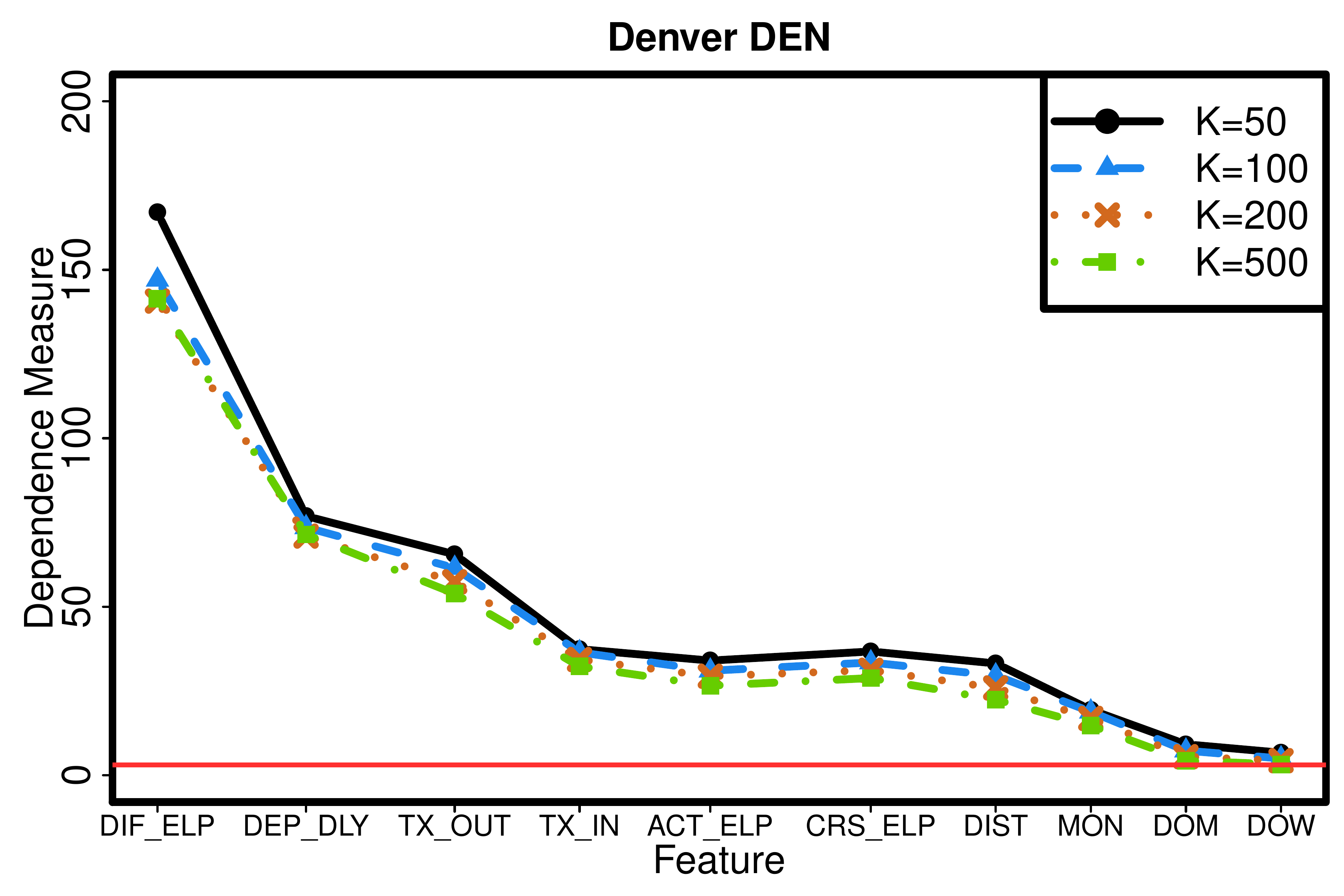}
		\label{fig:DEN}}
	\quad
	\subfigure{%
		\includegraphics[width=0.47\linewidth]{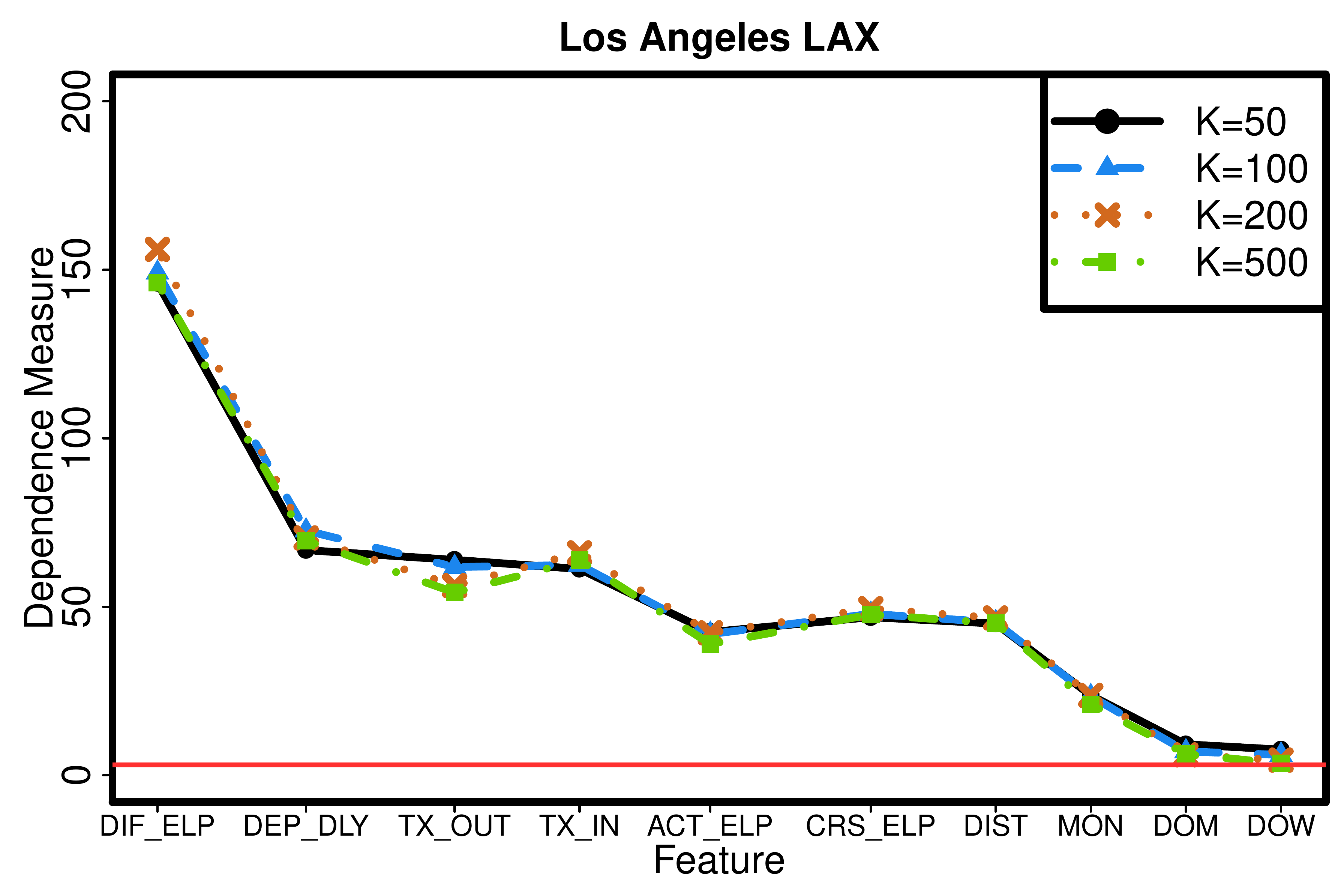}
		\label{fig:LAX}}
	\quad	
	\caption{Standardized dependence measures $\DM(\mathrm{ARR\_DELAY}, \bZ)$ for $\bZ$ being the $10$ feature variables respectively for the four airports with four block size $K$. The horizontal solid lines (in red) mark the $0.1\%$ critical value of the standard normal distribution.}
	\label{fig:real_data_airport}
\end{figure}

As expected, DIF\_ELAPS and DEP\_DELAY were the most dependent with the arrival delay for all airports. The only exception was Atlanta, where TAXI\_OUT was more dependent with the arrival delay than DEP\_DELAY. Besides DIF\_ELAPS and DEP\_DELAY, TAXI\_OUT and TAXI\_IN both had strong dependence on the arrival delay than the other six features for the four airports. %However, the first four features are not known before the completion of the flight. 
The DAY\_OF\_MONTH and DAY\_OF\_WEEK were the least dependent with the arrival delay. % among these $10$ features. 
For the feature DISTANCE, the result varied with different airports. For DEN and LAX, the dependence between DISTANCE and the arrival delay was stronger than those at ATL and ORD. It is observed that different data block size $K$ had less bearings on the 
distance measure for almost all the features at all four airports, except for the Distance and CRS\_ELP features at ATL, ORD and DEN. 

We carried out further analysis to give a clearer vision of the impact of DIF\_ELAPS and DEP\_DELAY on the arrival delay. Take Atlanta ATL as an illustration, we split the data of ATL into $10$ equal sized subsets according to the $\{0\%, 10\%,\ldots, 90\%, 100\%\}$ quantiles of its arrival delay. 
%Denote $\hat{q}_{\tau}$ as the $\tau$-th sample quantile of ARR\_DELAY. Then the arrival delays in the $\ell$-th subset were in the range $[\hat{q}_{(\ell-1)/10}, \hat{q}_{\ell/10})$ for $\ell=1,\ldots,10$. 
Figure \ref{fig:real_data_quan} plots  the standardized distance dependence between ARR\_DELAY and DIF\_ELAPS and DEP\_DELAY, respectively,  for different quantile ranges of ARR\_DELAY with the subset size $K\in\{50, 100, 200\}$. 
%The x-axis represents $11$ sample quantiles of ARR\_DELAY, from $0\%$ (minimum) to $100\%$ (maximum) with $10\%$ increment. 
 For each subset with ARR\_DELAY within each quantile range, the standardized dependence measures were plotted with respect to $K$. % in the distributed inference. % in the middle of the corresponding interval vertically. 
Figure \ref{fig:real_data_quan} shows that the results were consistent for different $K$ in all cases, which was assuring for the distributed inference.

\begin{figure}[h]
	\centering
	\includegraphics[width=0.6\linewidth]{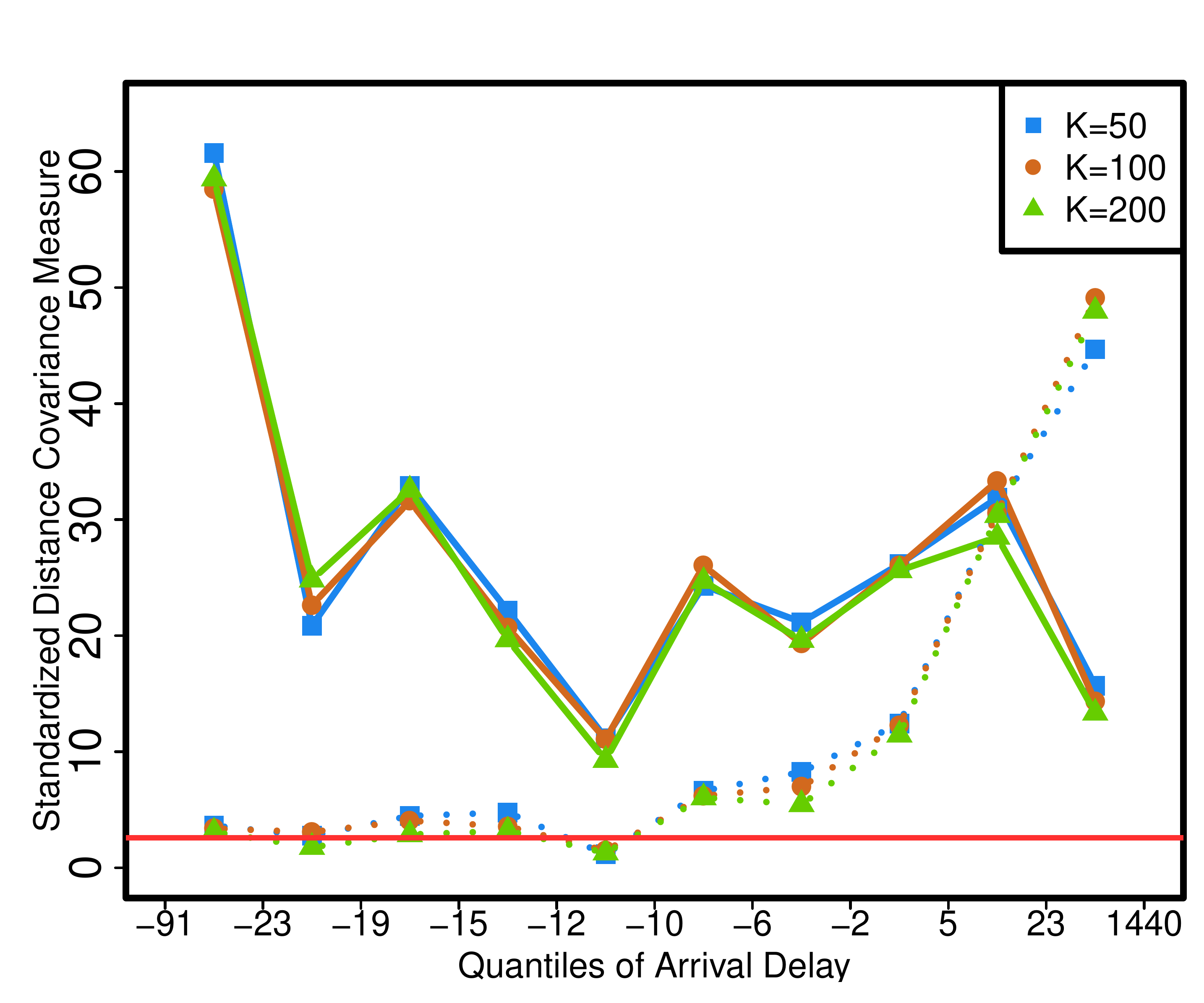}
	\caption{Standardized distance covariance measures between the arrival delay with DIF\_ELAPS (solid lines) and DEP\_DELAY (dotted lines) at Atlanta for different quantile ranges of ARR\_DELAY and blocking size $K$. 
	%The x-axis represents the quantiles of ARR\_DELAY with $10\%$ increment. 
	 The y-axis is in the $N(0,1)$ scale with 
		%for  the dependence measures for each subset with ARR\_DELAY within each pair of adjacent quantile range.  
		the horizontal red line marking the $0.995$ quantile.}
		\label{fig:real_data_quan}
\end{figure} 

%\renewcommand{\arraystretch}{1}
%\begin{table}[h]
%	\centering
%	\caption{$\DM(\mathrm{ARR\_DELAY}, \mathrm{DIF\_ELAPS})$ and $\DM(\mathrm{ARR\_DELAY}, \mathrm{DEP\_DELAY})$ for different range of ARR\_DELAY, $K=100$.}
%	\begin{tabular}{|c|ccccc|}
%		\hline
%		Subset No. & 1 & 2 & 3 & 4 & 5  \\\hline
%		$\hat{q}_{(i-1)/10}$ & $-91$ & $-23$ & $-19$ & $-15$ & $-12$ \\
%		$\DM(\mathrm{ARR\_DELAY}, \mathrm{DIF\_ELAPS})$ & $58.4473$ & $22.6276$ & $31.6112$ & $20.6948$ & $11.0598$ \\
%		$\DM(\mathrm{ARR\_DELAY}, \mathrm{DEP\_DELAY})$ & $3.38486$ & $3.08452$ & $4.05567$ & $3.56557$ & $1.47926$ \\\hhline{|=|=====|}
%		Subset No. & 6 & 7 & 8 & 9 & 10  \\\hline
%		$\hat{q}_{(i-1)/10}$ & $-10$ & $-6$ & $-2$ & $5$ & $23$ \\
%		$\DM(\mathrm{ARR\_DELAY}, \mathrm{DIF\_ELAPS})$ & $26.0377$ & $19.3130$ & $26.0549$ & $33.3276$ & $14.3228$ \\
%		$\DM(\mathrm{ARR\_DELAY}, \mathrm{DEP\_DELAY})$ & $6.19914$ & $6.97160$ & $12.2745$ & $30.5761$ & $49.0913$ \\\hhline{|=|=====|}
%	\end{tabular}
%	\label{tab:real_dat_quantile}
%\end{table}

\section{Conclusion}

The paper has investigated distributed inferences for massive data with a focus on a general  symmetric statistics $T_N$ given in (\ref{eq:T_N}), which is designed to make the computation scalable. 
% issues for massive data,  
We have provided detailed analyzes  on the statistical properties of the distributed statistics as well as their asymptotic distributions when  the number of subsets $K$ is either  fixed or diverging and the statistics is non-degenerate or otherwise. 
Two bootstrap methods are proposed and studied theoretically, which are showed to have aspects of computational advantages over the BLB and SDB in the context of massive data.

An important practical issue for the distributed inference is the choice of the number of blocks $K$.  As showed in our analysis,  an increasing $K$ would decrease the computational cost, but lead to a loss in statistical efficiency.  
This topic has been touched in Section \ref{sec:select_K}, where storage and memory requirement are considered. However, it is still an issue on how to select $K$ in practice that balances the computing time and statistical efficiency. Instead of considering a fixed time budget, another way of consideration is to minimize computing time subject to attaining certain statistical efficiency. This is similar as the sample size determination problem. We leave its analysis in future study.  
%In addition, we have focused our development of the distributed inference  on the i.i.d.~data. 
%It is unclear how these methods work for dependent data.  
It  is also of interest to study higher order correctness and convergence rates of our proposed distributed approaches. % , as \citet{Kleiner2014} for the BLB. %and we plan to investigate this in the future.

\iffalse 

 and studied its statistical efficiency. We also considered the degenerate case when the linear term in $T_N$ vanishes. Furthermore, regarding the computational advantages of $T_{N,K}$ against $T_N$, we proposed and studied two bootstrap algorithms to approximate the distribution function of $T_{N,K}$. The theoretical properties of the proposed methods are established and are applied on testing independence using the distance covariance on four massive airline arrival datasets. Finally, we used numerical studies to illustrate our theoretical results.

There are several issues for the BLB, as raised  in \citet{Sengupta2015}. One is that BLB uses  a small number of subsets (i.e. small $S$) but a large number of resamples for each subset (i.e. large $B$), 
 which may lead to only a small portion of the full dataset being covered. % to get the BLB estimator.  
Another issue is that  using too many resamples for each subset can increase the computational burden too.
 %Finally, %there are three tuning parameters ($\varsigma$, $S$, $B$) needed to be selected for the BLB, which is hard to choose optimally. 
%with fixed time budget, it is unclear how to choose ($S$, $B$) optimally.
For the SDB, it can not be implemented distributively as it requires to conduct the first level resampling from the entire data.
We propose two versions of a distributed bootstrap method that overcomes these issues. %  which does not require communication among different data blocks.
\fi

\iffalse 
\section*{Supplementary materials}

In the supplementary materials, we present technical details, proofs of main theorems in the paper and additional simulation results.
\fi

%===============================================================================================%
%===============================================================================================%
\clearpage\pagebreak\newpage

\newpage
\setcounter{page}{1}

%\newcounter{sectionOld}
%\setcounter{section}{0}
%\setcounter{lemma}{0}
%
%
%
%\setcounter{table}{0}
%\makeatletter
%\renewcommand{\thetable}{S\@arabic\c@table}
%\makeatother
%
%\setcounter{figure}{0} \makeatletter
%\renewcommand{\thefigure}{S\@arabic\c@figure}
%\makeatother
\renewcommand\thesection{\Alph{section}}
\renewcommand\thesubsection{\thesection.\arabic{subsection}}

\newcounter{sectionOld}
\setcounter{section}{0}
\setcounter{lemma}{0}

\setcounter{theorem}{0}
\makeatletter
\renewcommand{\thetheorem}{\thesection.\arabic{subsection}}
\makeatother

\setcounter{corollary}{0}
\makeatletter
\renewcommand{\thecorollary}{\thesection.\arabic{subsection}}
\makeatother

\setcounter{table}{0}
\makeatletter
\renewcommand{\thetable}{S\@arabic\c@table}
\makeatother

\setcounter{figure}{0} \makeatletter
\renewcommand{\thefigure}{S\@arabic\c@figure}
\makeatother

\section*{\centering Supplementary material for ``Distributed Statistical Inference for Massive Data"}

\begin{center}
	Song Xi Chen and Liuhua Peng
\end{center}

\vspace{0.5in}

The supplementary materials including technical details, proofs of main theorems and additional simulation studies, are organized as follows. Section \ref{sec:sup_app} presents important lemmas and proofs of theorems in the paper. Section  \ref{sec:sup_tech} introduces distributed distance covariance along with other technical discussions. More simulation results are reported in Section \ref{sec:sup_simu}.

\section{Proofs}\label{sec:sup_app}

In this section, we present the proofs of main theorems in this paper. Before that, we give several lemmas that will be frequently used. The first lemma is a generalization of Esseen's inequality which we may refer to Theorem 5.7 in \citet{Petrov1998}.

\begin{lemma}\label{lem:l01}	
	Let $Y_1, \ldots, Y_N$ be independent random variables. For $i=1,\ldots,N$, $\E(Y_i)=0$ and $\E|Y_i|^{2+\delta}<\infty$ for some positive $\delta\leq 1$. Then
	\begin{align*}
	\sup\limits_{x \in \mathbf{R}}\bigg|\P\bigg(B_N^{-1/2}\sum_{i=1}^{N}Y_i \leq x \bigg) - \Phi(x)\bigg| \leq C_1B_N^{-1-\delta/2}\sum_{i=1}^{N}\E|Y_i|^{2+\delta},
	\end{align*}
	where $B_N=\sum_{i=1}^{N}\Var(Y_i)$ and $C_1$ is a positive constant.	
\end{lemma}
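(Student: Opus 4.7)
The plan is to adapt the classical Berry--Esseen argument via Esseen's smoothing inequality to the non-identically distributed setting with only $(2+\delta)$-th moments; this is essentially the content of Theorem~5.7 in \citet{Petrov1998}, but it is worth sketching how the argument proceeds. First, normalize by setting $\xi_j = Y_j/B_N^{1/2}$, so $S_N = \sum_{j=1}^N \xi_j$ has unit variance and $\sum_j \E|\xi_j|^{2+\delta} = L_N := B_N^{-1-\delta/2}\sum_{j=1}^N \E|Y_j|^{2+\delta}$. The goal reduces to showing $\sup_x |\P(S_N \leq x) - \Phi(x)| \leq C L_N$. Without loss of generality I may assume $L_N$ is at most a small absolute constant, since otherwise the left side is trivially bounded by $1 \leq C_1 L_N$.

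Next, I would invoke Esseen's smoothing lemma to obtain, for any $T>0$,
\[
\sup_x |\P(S_N \leq x) - \Phi(x)| \leq \frac{1}{\pi}\int_{-T}^{T}\bigg|\frac{\prod_{j=1}^N \phi_j(t) - e^{-t^2/2}}{t}\bigg|\,dt + \frac{c_0}{T},
\]
where $\phi_j(t) = \E e^{it\xi_j}$. The $(2+\delta)$-Taylor expansion of characteristic functions gives
\[
\big|\phi_j(t) - \big(1 - \tfrac{t^2}{2}\Var(\xi_j)\big)\big| \leq c\,|t|^{2+\delta}\E|\xi_j|^{2+\delta},
\]
and combining this with the inequality $|\prod_j a_j - \prod_j b_j| \leq \sum_j |a_j-b_j|$ for complex numbers of modulus at most one, and the pointwise bound $|\phi_j(t)| \leq \exp(-\tfrac{t^2}{2}\Var(\xi_j) + c|t|^{2+\delta}\E|\xi_j|^{2+\delta})$, I would show that
\[
\bigg|\prod_{j=1}^N \phi_j(t) - e^{-t^2/2}\bigg| \leq c\,|t|^{2+\delta}\, L_N\, e^{-t^2/4}
\]
on the interval $|t| \leq c' L_N^{-1/(2+\delta)}$. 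The Gaussian factor $e^{-t^2/4}$ arises because on this interval the cubic-type correction to each log-characteristic function does not overwhelm the quadratic term $-t^2\Var(\xi_j)/2$ in the exponent.

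Finally, I would choose $T = c' L_N^{-1/(2+\delta)}$, so that both the integral term and the remainder $c_0/T$ are of order $L_N$; indeed, the integral is controlled by $c L_N \int_{\mathbb{R}}|t|^{1+\delta}e^{-t^2/4}\,dt < \infty$. Reverting to the original variables gives the stated bound $C_1 B_N^{-1-\delta/2}\sum_{j=1}^N \E|Y_j|^{2+\delta}$. The main obstacle is securing the Gaussian decay $e^{-t^2/4}$ for the characteristic-function difference uniformly on $[-T,T]$, which is precisely what forces the a priori restriction to the regime of small Lyapunov ratio $L_N$; once this uniform decay is in hand, the rest of the argument is a routine choice of cutoff.
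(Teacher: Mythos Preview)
The paper does not supply a proof of this lemma; it is simply quoted as Theorem~5.7 of \citet{Petrov1998}. Your sketch via Esseen's smoothing inequality and $(2+\delta)$-order characteristic-function expansions is exactly the classical route Petrov takes, so the approach itself is the right one.

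There is, however, a genuine slip in your final step. With $T = c' L_N^{-1/(2+\delta)}$ the tail term in the smoothing lemma is $c_0/T \asymp L_N^{1/(2+\delta)}$, which for small $L_N$ is \emph{larger} than $L_N$ (since $1/(2+\delta)<1$); your argument as written therefore delivers only the suboptimal bound $O\big(L_N^{1/(2+\delta)}\big)$, not the claimed $O(L_N)$. The correct cutoff is $T \asymp L_N^{-1}$. Your product bound $\big|\prod_j \phi_j(t)\big| \le \exp\!\big(-t^2/2 + c|t|^{2+\delta}L_N\big)$ in fact retains the Gaussian damping $e^{-t^2/4}$ whenever $c|t|^{\delta}L_N \le 1/4$, i.e.\ on the window $|t|\le c'' L_N^{-1/\delta}$, and since $\delta\le 1$ this interval contains $[-c'/L_N,\,c'/L_N]$. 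Taking $T=c'/L_N$ then makes both the integral term (still dominated by $cL_N\int_{\mathbf R}|t|^{1+\delta}e^{-t^2/4}\,dt<\infty$) and the remainder $c_0/T$ genuinely of order $L_N$, which is what the lemma asserts.
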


The second lemma is a modified version of Marcinkiewicz-Zygmund strong law of larger numbers (SLLN) and its proof can be found in \citet{Liu1988}.

\begin{lemma}\label{lem:l02}	
	Let $Y_1, \ldots, Y_N$ be independent random variables with $\E|Y_i|^{\eta+\varepsilon}<\infty$ for some positive $\eta$, $\eta<2$, and $\varepsilon>0$. Then as $N\to\infty$,
	\begin{align*}
	N^{-1/\eta}\sum_{i=1}^{N}(Y_i-a_i) \to 0
	\end{align*}
	almost surely, where $a_i=\E Y_i$ if $\eta\geq 1$ and $a_i=0$ otherwise.	
\end{lemma}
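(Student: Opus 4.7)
The plan is to adapt the classical Marcinkiewicz--Zygmund strong law to the independent (not necessarily identically distributed) setting under the uniform moment bound $M := \sup_i \E|Y_i|^{\eta+\varepsilon} < \infty$ (which I take to be what the statement intends). I would combine a truncation at level $i^{1/\eta}$, Kolmogorov's one-series theorem on the truncated centered summands, Kronecker's lemma to pass to the Cesàro-type statement, and finally direct control of the deterministic centering drift in each of the two regimes $\eta \ge 1$ and $0<\eta<1$.

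First I would truncate: set $Y_i' = Y_i \mathbf{1}(|Y_i| \le i^{1/\eta})$. Chebyshev's inequality with the $\eta+\varepsilon$ moment gives
\[
\sum_{i=1}^\infty \P(Y_i \ne Y_i') \;\le\; \sum_{i=1}^\infty \frac{\E|Y_i|^{\eta+\varepsilon}}{i^{(\eta+\varepsilon)/\eta}} \;\le\; M\sum_{i=1}^\infty i^{-1-\varepsilon/\eta} \;<\;\infty,
\]
so by Borel--Cantelli, $Y_i=Y_i'$ for all but finitely many $i$ almost surely, and it suffices to analyze the truncated sum. Next, I would bound the variance by splitting into the cases $\eta+\varepsilon\le 2$ and $\eta+\varepsilon>2$: in the former, $|Y_i|^2 \mathbf{1}(|Y_i|\le i^{1/\eta}) \le |Y_i|^{\eta+\varepsilon}\, i^{(2-\eta-\varepsilon)/\eta}$, yielding $\Var(Y_i') \le M\, i^{(2-\eta-\varepsilon)/\eta}$; in the latter, $\Var(Y_i')$ is uniformly bounded. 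Either way,
\[
\sum_{i=1}^\infty i^{-2/\eta}\,\Var(Y_i') \;\le\; M\sum_{i=1}^\infty i^{-1-\varepsilon/\eta} \;+\; C\sum_{i=1}^\infty i^{-2/\eta} \;<\;\infty,
\]
since $\eta<2$. Hence by Kolmogorov's convergence theorem the series $\sum_i i^{-1/\eta}(Y_i'-\E Y_i')$ converges a.s., and Kronecker's lemma then gives $N^{-1/\eta}\sum_{i=1}^N (Y_i'-\E Y_i') \to 0$ a.s.

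It remains to show $N^{-1/\eta}\sum_{i=1}^N (\E Y_i' - a_i) \to 0$, which is where the split into regimes matters. For $\eta\ge 1$ (so $a_i=\E Y_i$), the drift is $\E Y_i - \E Y_i' = \E\bigl[Y_i\mathbf{1}(|Y_i|>i^{1/\eta})\bigr]$, bounded in absolute value by $\E|Y_i|^{\eta+\varepsilon} i^{-(\eta+\varepsilon-1)/\eta} \le M\, i^{-(\eta+\varepsilon-1)/\eta}$, so the sum up to $N$ is $O(N^{(1-\varepsilon)/\eta})$ and divided by $N^{1/\eta}$ is $O(N^{-\varepsilon/\eta})\to 0$. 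For $0<\eta<1$ (so $a_i=0$), I simply bound $|\E Y_i'|\le \E|Y_i|\le M^{1/(\eta+\varepsilon)}$ uniformly, and then $N^{-1/\eta}\sum_{i=1}^N|\E Y_i'| = O(N^{1-1/\eta})\to 0$ since $1/\eta>1$. Combining these three ingredients yields the claim. The main technical obstacle is the variance bound above, since one must handle both $\eta+\varepsilon\le 2$ and $\eta+\varepsilon>2$ to interpolate between the moment condition and the truncation level while keeping the exponent $-1-\varepsilon/\eta$ summable.
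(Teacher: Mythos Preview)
The paper does not actually prove this lemma; it simply states that the proof ``can be found in \citet{Liu1988}.'' Your argument is exactly the standard truncation-plus-Kronecker route that underlies Marcinkiewicz--Zygmund type strong laws and is the approach in Liu (1988), so in substance you match the paper's (cited) proof. Your reading of the hypothesis as a uniform bound $M=\sup_i\E|Y_i|^{\eta+\varepsilon}<\infty$ is also correct and consistent with how the lemma is invoked later in the paper.

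There is one small gap. In the regime $0<\eta<1$ you bound $|\E Y_i'|\le \E|Y_i|\le M^{1/(\eta+\varepsilon)}$ via Lyapunov's inequality, but that inequality goes in this direction only when $\eta+\varepsilon\ge 1$; when $\eta+\varepsilon<1$ the first moment $\E|Y_i|$ need not even be finite. The fix is immediate and parallels your $\eta\ge 1$ step: bound directly
\[
\E|Y_i'| \;=\; \E\bigl[|Y_i|\,\mathbf{1}(|Y_i|\le i^{1/\eta})\bigr]
\;\le\; i^{(1-\eta-\varepsilon)/\eta}\,\E|Y_i|^{\eta+\varepsilon}
\;\le\; M\,i^{(1-\eta-\varepsilon)/\eta},
\]
so that $N^{-1/\eta}\sum_{i\le N}|\E Y_i'|=O(N^{-\varepsilon/\eta})\to 0$. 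With this patch the argument is complete. (A cosmetic point: in the $\eta\ge 1$ drift bound, the expression $O(N^{(1-\varepsilon)/\eta})$ for the partial sum is literally correct only when $\varepsilon<1$; for $\varepsilon\ge 1$ the sum is $O(\log N)$ or $O(1)$, but of course the conclusion still holds.)
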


\subsection{Proof of Theorem \ref{theo:uniform_convergence}}

\begin{proof}	
	According to Lemma \ref{lem:l01}, we know that
	\begin{align*}
	\sup\limits_{x \in \mathbf{R}}\left|\P\left(V_N \leq x \right) - \Phi(x)\right| \leq N^{-\delta/2}C_1\sigma_{\alpha}^{-2-\delta}\E|\alpha(X_1;F)|^{2+\delta},
	\end{align*}
	where $V_N = N^{-1/2}\sigma_{\alpha}^{-1}\sum_{i=1}^{N}\alpha(X_i;F)$. Denote $\Delta_N = N^{-3/2}\sigma_{\alpha}^{-1}\sum_{1\leq i < j \leq N}\beta(X_i,X_j;F)$ and $\Delta_{N,K} = N^{-1/2}\sigma_{\alpha}^{-1}\sum_{k=1}^{K}n_k^{-1}\sum_{1\leq i < j \leq n_k}\beta(X_{k,i}, X_{k,j}; F)$, then by Cauchy-Schwarz inequality, 
	\begin{align*}
	& \P\left(|\Delta_N| \geq (lnN)^{-1}\right)=O\big(N^{-1}(lnN)^2\big) \text{~~~and~~~} \\
	& \P\left(|\Delta_{N,K}| \geq (lnN)^{-1}\right)=O\big(KN^{-1}(lnN)^2\big).
	\end{align*}
	%If $\E(R_N)=b_1N^{-\tau_1}+o(N^{-\tau_1})$ and $\Var(R_N)=o(N^{-\tau_2})$, u
	Under Condition \ref{condition:R_N} and the assumption that $K=O(N^{\tau'})$, $\E(R_{N,K})=O(N^{\tau_1\tau'-\tau_1})$ and $\Var(R_{N,K})=o(N^{\tau_2\tau'-\tau'-\tau_2})$. Again by Cauchy-Schwarz inequality, 
	\begin{align*}
	& \P\big(|R_N| \geq N^{-1/2}(lnN)^{-1}\big)=O\big(N^{1-2\tau_1}(lnN)^2\big)+o\big(N^{1-\tau_2}(lnN)^2\big)=o(1), \\
	& \P\big(|R_{N,K}| \geq N^{-1/2}(lnN)^{-1}\big)=O\big(N^{2\tau_1\tau'-2\tau_1+1}(lnN)^2\big)+o\big(N^{\tau_2\tau'-\tau'-\tau_2+1}(lnN)^2\big)=o(1),
	\end{align*}
	under the condition that $\tau'<1-1/(2\tau_1)$. Finally, by the fact that
	\begin{align*}
	\sup\limits_{x \in \mathbf{R},~|t|<(lnN)^{-1}}\left|\Phi(x+t) - \Phi(x)\right| = o(1).
	\end{align*}
	we complete the proof of Theorem \ref{theo:uniform_convergence}.
	
\end{proof}

\subsection{Proof of Theorem \ref{theo:T_N_K_degenerate}}

\begin{proof}
	
	(i) When $K$ is finite, the result can be easily obtained by the independence between each subset and the classic limit theorem for degenerate U-statistics \citep{Serfling1980}. 
	
	(ii) When $K\rightarrow\infty$, without loss of generality, assume $\theta=0$. Denote $T_{N,K}^{(k)}=\Delta_{N,K}^{(k)}+R_{N,K}^{(k)}$ where $\Delta_{N,K}^{(k)}=n_k^{-2}\sum_{1\leq i<j\leq n_k}\beta\left(X_{k,i}, X_{k,j}; F\right)$. Under the condition that $R_{N,K}=o_p(K^{1/2}N^{-1})$, it is sufficient to prove that $2^{1/2}K^{-1/2}N\sigma^{-1}_{\beta}\Delta_{N,K}$ converges to %standard normal distribution 
	$\mathcal{N}(0,1)$ as $K,N\to\infty$, here
	$ \Delta_{N,K}=N^{-1}\sum_{k=1}^{K}n_k\Delta_{N,K}^{(k)}=N^{-1}\sum_{k=1}^{K}n_k^{-1}\sum_{1\leq i<j\leq n_k}\beta\left(X_{k,i}, X_{k,j}; F\right)$.
	Rewrite
	\begin{align*}
	2^{1/2}K^{-1/2}N\sigma^{-1}_{\beta}\Delta_{N,K} = \sum_{k=1}^{K}2^{1/2}K^{-1/2}n_k\sigma^{-1}_{\beta}\Delta_{N,K}^{(k)} \equiv \sum_{k=1}^{K}\mathbf{\Delta}_{N,K}^{(k)},
	\end{align*}
	where $\mathbf{\Delta}_{N,K}^{(k)}=2^{1/2}K^{-1/2}n_k\sigma^{-1}_{\beta}\Delta_{N,K}^{(k)}$ satisfies $\E\big(\mathbf{\Delta}_{N,K}^{(k)}\big)=0$ and $\Var\big(\mathbf{\Delta}_{N,K}^{(k)}\big)=K^{-1}(1-n_k^{-1})$ for $k=1,\ldots,K$. As $\big\{\mathbf{\Delta}_{N,K}^{(k)}\big\}_{k=1}^K$ are independent, it is enough to check the Lindeberg condition for $\mathbf{\Delta}_{N,K}^{(k)}$. Note that $s_{N,K}^2\equiv\sum_{k=1}^{K}\Var\big(\mathbf{\Delta}_{N,K}^{(k)}\big)=1-K^{-1}\sum_{k=1}^{K}n_k^{-1}$, then for every $\varepsilon>0$,
	\begin{align*}
	& s_{N,K}^{-2}\sum_{k=1}^{K}\E\left[\left|\mathbf{\Delta}_{N,K}^{(k)}\right|^2\mathbf{1}\left\{\left|\mathbf{\Delta}_{N,K}^{(k)}\right|>\varepsilon s_{N,K}\right\}\right] \\
	= & s_{N,K}^{-2}\sum_{k=1}^{K}K^{-1}\E\left[\left|K^{1/2}\mathbf{\Delta}_{N,K}^{(k)}\right|^2\mathbf{1}\left\{\left|K^{1/2}\mathbf{\Delta}_{N,K}^{(k)}\right|>\varepsilon s_{N,K}K^{1/2}\right\}\right] \\
	\leq & s_{N,K}^{-2}\sum_{k=1}^{K}K^{-1}\left(\varepsilon s_{N,K}K^{1/2}\right)^{-\delta'}\E\left|K^{1/2}\mathbf{\Delta}_{N,K}^{(k)}\right|^{2+\delta'} \\
	\leq & C_2s_{N,K}^{-2}\sum_{k=1}^{K}K^{-1}\left(\varepsilon s_{N,K}K^{1/2}\right)^{-\delta'} \\
	\to & ~0,~~~~~as~~K\to\infty.
	\end{align*}
	The next-to-last line is from the moment inequalities of U-statistics \citep{Koroljuk1994} and $C_2$ is a positive constant. Thus we finish the proof of Theorem \ref{theo:T_N_K_degenerate}.	
\end{proof}

\subsection{Proof of Theorem \ref{theo:edg_T_SC_N_boot1}}

\begin{proof}
	
	According to the proof of Theorem \ref{theo:uniform_convergence}, under the conditions in Theorem \ref{theo:uniform_convergence}, we have
	\begin{align*}
	\sup\limits_{x \in \mathbf{R}}\left|\P\left\{N^{1/2}\sigma_{\alpha}^{-1}(T_{N, K}-\theta) \leq x\right\}  - \Phi(x)\right| = o(1),
	\end{align*}
	thus, it is sufficient to show that under the conditions assumed in the theorem
	\begin{align}\label{eq:proof_10}
	\sup\limits_{x \in \mathbf{R}}\left|\P\left\{N^{1/2}\hat{\sigma}^{-1}_{\alpha,N,K}(T^*_{N, K}-\hat{\theta}_{N,K}) \leq x\big| F_{N,K}^{(1)},\ldots,F_{N,K}^{(K)}\right\} - \Phi(x)\right| = o_p(1).
	\end{align}	
	We proceed to show that $\E\{|\hat{\alpha}(X_{k,1}^*; F_{N,K}^{(k)})|^{2+\delta}|F_{N,K}^{(k)}\}$ is bounded in probability for $k=1,\ldots,K$. Note that
	\begin{align*}
	& \E\{|\hat{\alpha}(X_{k,1}^*; F_{N,K}^{(k)})|^{2+\delta}|F_{N,K}^{(k)}\} \\
	= & n_k^{-1}\sum_{i=1}^{n_k}|\hat{\alpha}(X_{k,i}; F_{N,K}^{(k)})|^{2+\delta} \\
	\leq & n_k^{-1}\sum_{i=1}^{n_k}C_{\delta}\big\{|\hat{\alpha}(X_{k,i}; F_{N,K}^{(k)})-\alpha(X_{k,i};F)|^{2+\delta}+|\alpha(X_{k,i};F)|^{2+\delta}\big\},
	\end{align*}
	for some positive constant $C_\delta$. By the condition that $\sup\limits_{x \in S(F)}|\hat{\alpha}(x; F_{N,K}^{(k)})-\alpha(x; F)|=o_p(1)$ and the strong law of large numbers, we have $\E\{|\hat{\alpha}(X_{k,1}^*; F_{N,K}^{(k)})|^{2+\delta}|F_{N,K}^{(k)}\}=O_p(1)$ for $k=1,\ldots,K$. Similarly, we can show that $\E\{|\hat{\beta}(X_{k,1}^*, X_{k,2}^*; F_{N,K}^{(k)})|^2|F_{N,K}^{(k)}\}<\infty$ in probability.
	
	Thus, under the condition $\P\big\{|R^*_{N, K}| \geq N^{-1/2}(ln N)^{-1}\big|F_{N,K}^{(1)},\ldots,F_{N,K}^{(K)}\big\}=o_p(1)$, by carrying out similar procedures in the proof of Theorem \ref{theo:uniform_convergence}, we can prove (\ref{eq:proof_10}) which leads to the result that
	\begin{align*}
	\sup\limits_{x \in \mathbf{R}}\left|\P\left\{N^{1/2}\hat{\sigma}^{-1}_{\alpha,N,K}(T^*_{N, K}-\hat{\theta}_{N,K}) \leq x\big| F_{N,K}^{(1)},\ldots,F_{N,K}^{(K)}\right\} - \P\left\{N^{1/2}\sigma_{\alpha}^{-1}(T_{N, K}-\theta) \leq x\right\}\right| = o_p(1).
	\end{align*}
	
	In addition,
	\begin{align*}
	\hat{\sigma}^{2}_{\alpha,N,K} - \sigma_{\alpha}^2 = & N^{-1}\sum_{k=1}^{K}n_k\left[\E\left\{\hat{\alpha}^{2}(X_{k,1}^*; F_{N,K}^{(k)})\big|F_{N,K}^{(k)}\right\} - \sigma_{\alpha}^2\right] \\
	= & N^{-1}\sum_{k=1}^{K}n_k\left\{n_k^{-1}\sum_{i=1}^{n_k}\hat{\alpha}^{2}(X_{k,i}; F_{N,K}^{(k)}) - \sigma_{\alpha}^2\right\} \\
	= & N^{-1}\sum_{k=1}^{K}n_k\bigg[n_k^{-1}\sum_{i=1}^{n_k}\left\{\hat{\alpha}(X_{k,i}; F_{N,K}^{(k)})-\alpha(X_{k,i};F)\right\}^2 \\
	& + n_k^{-1}\sum_{i=1}^{n_k}2\alpha(X_{k,i};F)\left\{\hat{\alpha}(X_{k,i}; F_{N,K}^{(k)})-\alpha(X_{k,i};F)\right\} \\
	& +  n_k^{-1}\sum_{i=1}^{n_k}\left\{\alpha^2(X_{k,i}; F) - \sigma_{\alpha}^2\right\}\bigg] \\
	= & o_p(1).
	\end{align*}
	
\end{proof}

\subsection{Proof of Theorem \ref{corr:pdb1}}

\begin{proof}
	Let $\mathfrak{T}_{N,K}^{(k)}=\mathcal{T}_{N,K}^{(k)}-N^{1/2}K^{-1/2}\theta$. Denote $$\theta_k=\E\left(\mathfrak{T}_{N, K}^{(k)}\right)=N^{-1/2}K^{1/2}(n_k-NK^{-1})\theta+N^{-1/2}K^{1/2}n_k\E\left(R_{N,K}^{(k)}\right)$$ and $\sigma_{k}^2=\Var\left(\mathfrak{T}_{N, K}^{(k)}\right)=N^{-1}Kn_k\sigma_{\alpha}^2\left\{1+o(1)\right\}$ for $k=1,\ldots,K$, then
	\begin{align*}
	\bar{\theta}=K^{-1}\sum_{k=1}^{K}\theta_k=N^{-1/2}K^{-1/2}\sum_{k=1}^{K}n_k\E\left(R_{N,K}^{(k)}\right).
	\end{align*}
	
	Denote $\mathfrak{T}_{N,K}^{*(k)}=\mathcal{T}_{N,K}^{*(k)}-N^{1/2}K^{-1/2}\theta$, then $\mathfrak{T}_{N,K}^{*(k)}$, $k=1,\ldots,K$ are i.i.d.~from $F_{K, \mathcal{T}}(x+N^{1/2}K^{-1/2}\theta)$. Since $\E|\alpha(X_1;F)|^{2+\delta} < \infty$, $\E\left|\beta(X_1, X_2; F)\right|^{2+\delta}<\infty$ and $\sup_k\E\left|n_k^{1/2}R_{N,K}^{(k)}\right|^{2+\delta}<\infty$, we have $\sup_k\E\left(\left|\mathfrak{T}_{N,K}^{(k)}\right|^{2+\delta}\right)<\infty$. This immediately results in
	\begin{align*}
	\E\left\{\left|\mathfrak{T}_{N,K}^{*(k)}\right|^{2+\delta_1}\big| F_{K, \mathcal{T}}\right\} = K^{-1}\sum_{k=1}^{K}\left|\mathfrak{T}_{N,K}^{(k)}\right|^{2+\delta_1}<\infty
	\end{align*}
	with probability $1$ for some $\delta_1$ satisfies $0<\delta_1<\delta$ (Lemma \ref{lem:l02}). Define $$\mathfrak{T}_{N,K}^{*}=K^{-1}\sum_{k=1}^{K}\mathfrak{T}_{N,K}^{*(k)}=\mathcal{T}_{N,K}^{*}-N^{1/2}K^{-1/2}\theta,$$
	then $\mathfrak{T}_{N,K}^{*}-\mathfrak{T}_{N,K}=\mathcal{T}_{N,K}^{*}-N^{1/2}K^{-1/2}T_{N,K}$ where $\mathfrak{T}_{N,K}=K^{-1}\sum_{k=1}^{K}\mathfrak{T}_{N,K}^{(k)}$. Denote $\bar{\sigma}^2=K^{-1}\sum_{k=1}^{K}\sigma_k^2$ and $$\bar{s}^2=K^{-1}\sum_{k=1}^{K}\left(\mathfrak{T}_{N,K}^{(k)}-\mathfrak{T}_{N,K}\right)^2,$$
	then $\sup_kN^{-1/2}K^{1/2}|n_k-NK^{-1}|\to0$ implies that
	\begin{align*}
	\bar{s}^2-\bar{\sigma}^2 =  K^{-1}\sum_{k=1}^{K}\left\{\left(\mathfrak{T}_{N,K}^{(k)}-\theta_k+\theta_k-\bar{\theta}+\bar{\theta}-\mathfrak{T}_{N,K}\right)^2-\sigma_k^2\right\} \to 0
	\end{align*}
	almost surely.
	According to Lemma \ref{lem:l01},	
	\begin{align*}
	& \sup\limits_{x \in \mathbf{R}}\left|\P\left\{K^{1/2}\bar{s}^{-1}(\mathcal{T}_{N,K}^{*}-N^{1/2}K^{-1/2}T_{N,K}) \leq x\bigg|F_{K, \mathcal{T}}\right\} - \Phi(x)\right| \\
	\leq & C_1\bar{s}^{-2-\delta_1}K^{-\delta_1/2}\E\left\{\left|\mathfrak{T}_{N,K}^{*(k)}-\mathfrak{T}_{N,K}\right|^{2+\delta_1}\big| F_{K, \mathcal{T}}\right\} \\
	\leq & C_2\bar{s}^{-2-\delta_1}K^{-\delta_1/2}\left[\E\left\{\left|\mathfrak{T}_{N,K}^{*(k)}\right|^{2+\delta_1}\big| F_{K, \mathcal{T}}\right\}+\left|\mathfrak{T}_{N,K}\right|^{2+\delta_1}\right] 
	\to  0
	\end{align*}
	almost surely as $K\to\infty$.
	It has been shown in the proof of Theorem \ref{theo:uniform_convergence} that
	\begin{align*}
	\sup\limits_{x \in \mathbf{R}}\left|\P\left\{N^{1/2}\sigma_{\alpha}^{-1}(T_{N,K}-\theta) \leq x \right\} - \Phi(x)\right| = o(1).
	\end{align*}
	Combine this with the fact that $\bar{s}^2-\sigma_{\alpha}^2\to0$ with probability 1, we complete the proof of this corollary.

\end{proof}

\subsection{Proof of Theorem \ref{corr:pdb2}}

\begin{proof}
	Use similar techniques as in the proof of Theorem \ref{corr:pdb1}, let $\mathfrak{T}_{N,K}^{(k)}=\mathcal{T}_{N,K}^{(k)}-NK^{-1}\theta=n_k(T_{N,K}^{(k)}-\theta)$, denote $\theta_k=\E\big(\mathfrak{T}_{N, K}^{(k)}\big)=n_k\E\big(R_{N,K}^{(k)}\big)$ and $\sigma_{k}^2=\Var\big(\mathfrak{T}_{N, K}^{(k)}\big)=2^{-1}\sigma_{\beta}^2\left\{1+o(1)\right\}$ for $k=1,\ldots,K$, then $\mathfrak{T}_{N,K}=K^{-1}\sum_{k=1}^{N}\mathfrak{T}_{N,K}^{(k)}=NK^{-1}(T_{N,K}-\theta)$ and $\bar{\theta}=K^{-1}\sum_{k=1}^{K}\theta_k=K^{-1}\sum_{k=1}^{K}n_k\E\big(R_{N,K}^{(k)}\big)$.
	
	Under the conditions that $\E\left|\beta(X_1, X_2; F)\right|^{2+\delta'}<\infty$ and $\sup_k\E\big|n_kR_{N,K}^{(k)}\big|^{2+\delta'}<\infty$, we obtain $\sup_k\E\big|n_k(T_{N,K}^{(k)}-\theta)\big|^{2+\delta'}<\infty$, which in turn implies that
	\begin{align*}
	\sup\limits_{x \in \mathbf{R}}\left|\P\left\{NK^{-1/2}\bar{\sigma}^{-1}(T_{N,K}-\theta) \leq x \right\} - \Phi(x)\right| = o(1),
	\end{align*}
	where $\bar{\sigma}^2=K^{-1}\sum_{k=1}^{K}\sigma_k^2$.
	
	Denote $\mathfrak{T}_{N,K}^{*(k)}=\mathcal{T}_{N,K}^{*(k)}-NK^{-1}\theta$, then $\mathfrak{T}_{N,K}^{*(k)}$, $k=1,\ldots,K$ are i.i.d. from $F_{K, \mathcal{T}}(x+NK^{-1}\theta)$. As $\sup_k\E\big(\big|\mathfrak{T}_{N,K}^{(k)}\big|^{2+\delta'}\big)<\infty$, this immediately results in
	\begin{align*}
	\E\left\{\left|\mathfrak{T}_{N,K}^{*(k)}\right|^{2+\delta_2}| F_{K, \mathcal{T}}\right\} = K^{-1}\sum_{k=1}^{K}\left|\mathfrak{T}_{N,K}^{(k)}\right|^{2+\delta_2}<\infty
	\end{align*}
	with probability $1$ and $\delta_2$ satisfies $0<\delta_2<\delta'$ (Lemma \ref{lem:l02}). Define $$\mathfrak{T}_{N,K}^{*}=K^{-1}\sum_{k=1}^{K}\mathfrak{T}_{N,K}^{*(k)}=\mathcal{T}_{N,K}^{*}-NK^{-1}\theta,$$
	then $\mathfrak{T}_{N,K}^{*}-\mathfrak{T}_{N,K}=\mathcal{T}_{N,K}^{*}-NK^{-1}T_{N,K}$. Denote $\bar{s}^2=K^{-1}\sum_{k=1}^{K}\left(\mathfrak{T}_{N,K}^{(k)}-\mathfrak{T}_{N,K}\right)^2$, then
	\begin{align*}
	\bar{s}^2-\bar{\sigma}^2 =  K^{-1}\sum_{k=1}^{K}\left\{\left(\mathfrak{T}_{N,K}^{(k)}-\theta_k+\theta_k-\bar{\theta}+\bar{\theta}-\mathfrak{T}_{N,K}\right)^2-\sigma_k^2\right\},
	\end{align*}
	which convergence to $0$ almost surely.
	According to Lemma \ref{lem:l01},	
	\begin{align*}
	& \sup\limits_{x \in \mathbf{R}}\left|\P\left\{K^{1/2}\bar{s}^{-1}(\mathcal{T}_{N,K}^{*}-NK^{-1}T_{N,K}) \leq x\bigg|F_{K, \mathcal{T}}\right\} - \Phi(x)\right| \\
	\leq & C_1\bar{s}^{-2-\delta_2}K^{-\delta_2/2}\E\left\{\left|\mathfrak{T}_{N,K}^{*(k)}-\mathfrak{T}_{N,K}\right|^{2+\delta_2}| F_{K, \mathcal{T}}\right\} 
	\to  0
	\end{align*}
	almost surely as $K\to\infty$, thus we finish the proof of Theorem \ref{corr:pdb2}.
	
\end{proof}

\section{Technical details}\label{sec:sup_tech}

\subsection{U-statistics as an example}\label{sec:U_stat_exp}

We show that the conditions in Theorem \ref{theo:edg_T_SC_N_boot1} are satisfied by U-statistics. Suppose $U_N=U(\mathfrak{X}_N)$ is a U-statistic of degree $2$ with a symmetric kernel function $h$, that is,
\begin{align}\label{eq:U_exp}
\notag	U_N = & 2\{N(N-1)\}^{-1}\sum_{1\leq i<j\leq N}h(X_{i}, X_{j}) \\
= & \theta_U + N^{-1}\sum_{i=1}^{N}\alpha_U(X_i; F) + N^{-2}\sum_{1\leq i < j \leq N}\beta_U(X_i, X_j; F) + R_{UN},
\end{align}
where $\theta_U=\E(U_N)$, $\alpha_U(x; F)=2\left[\E\left\{h(x, X_2)\right\}-\theta_U\right]$, $\beta_U(x, y; F)=2h(x, y)-\alpha_U(x; F)-\alpha_U(y; F)-2\theta_U$, and $R_{UN}$ is the remainder term. Obviously, $U_N$ is in the form of \eqref{eq:T_N}. Under the condition that $\E\{h(X_1, X_2)\}^2<\infty$, it can be shown that $\alpha_U$ and $\beta_U$ meet Condition \ref{condition:alpha} with $\sigma_{\alpha,U}^2=\Var\{\alpha_U(X_1;F)\}$ and $R_{UN}$ satisfies $\E(R_{UN})=0$ and $\E(R_{UN}^2)=O(N^{-4})$ \citep{Hoeffding1948,Serfling1980}. Let $U_{N,K}^{(k)}=U\big(\mathfrak{X}_{N,K}^{(k)}\big)$ be the corresponding U-statistic obtained from the $k$-th data block, $k=1,\ldots,K$, then the remainder terms of $\{U_{N,K}^{(k)}\}_{k=1}^K$ satisfy Condition \ref{condition:R_N} with $\tau_1=\infty$. Thus the distributed U-statistic is
\begin{align*}
U_{N,K}= & N^{-1}\sum_{k=1}^{K}n_k^{-1}U_{N,K}^{(k)} \\
= & \theta_U + N^{-1}\sum_{i=1}^{N}\alpha_U(X_i; F) + N^{-1}\sum_{k=1}^{K}n_k^{-1}\sum_{1\leq i < j \leq n_k}\beta_U(X_{k,i}, X_{k,j}; F) + R_{UN,K},
\end{align*}
where $R_{UN,K}$ satisfies $\E(R_{UN,K})=0$ and $\E\left(R_{UN,K}^2\right)=O(K^3N^{-4})$ under Condition \ref{condition:K}. %According to 
%From Theorem \ref{theo:uniform_convergence}, 
If $\E|\alpha_U(X_1; F)|^{2+\delta}<\infty$ and $K=O(N^{\tau'})$ for some positive $\delta$ and $\tau'<1$, then as $N\to\infty$,
\begin{align*}
\sup\limits_{x \in \mathbf{R}}\big|\P\big\{N^{1/2}\sigma_{\alpha, U}^{-1}(U_{N, K}-\theta_U) \leq x\big\} - \P\big\{N^{1/2}\sigma_{\alpha, U}^{-1}(U_N-\theta_U) \leq x\big\}\big| = o(1).
\end{align*}

Now we consider the distributed bootstrap version of the distributed U-statistic. For $k=1,\ldots,K$, let $\mathfrak{X}_{N,K}^{*(k)}=\{X_{k,1}^*,\ldots,X_{k,n_k}^*\}$ be an i.i.d.~sample from $F_{N,K}^{(k)}$ and $U_{N,K}^{*(k)}=U\big(\mathfrak{X}_{N,K}^{*(k)}\big)$, %be the corresponding U-statistic based on $\mathfrak{X}_{N,K}^{*(k)}$, $k=1,\ldots,K$, 
then
\begin{align*}
U_{N,K}^{*(k)}= & 2\{n_k(n_k-1)\}^{-1}\sum_{1\leq i<j\leq n_k}h(X_{k,i}^*, X_{k,j}^*) \\
= & \hat{\theta}_{UN,K}^{{(k)}} + n_k^{-1}\sum_{i=1}^{n_k}\hat{\alpha}_U(X^*_{k,i}; F_{N,K}^{(k)}) + n_k^{-2}\sum_{1\leq i < j \leq n_k}\hat{\beta}_U(X^*_{k,i}, X^*_{k,j}; F_{N,K}^{(k)}) + R^{*(k)}_{UN, K},
\end{align*}
where $\hat{\theta}_{UN,K}^{{(k)}}=\E\big(U_{N,K}^{*(k)}|F_{N,K}^{(k)}\big)=n_k^{-2}\sum_{i=1}^{n_k}\sum_{j=1}^{n_k}h(X_{k,i}, X_{k,j})$, $$\hat{\alpha}_U(x; F_{N,K}^{(k)})=2\big[\E\big\{h(x, X_{k,1}^*)|F_{N,K}^{(k)}\big\}-\hat{\theta}_{UN,K}^{{(k)}} \big]$$, and $\hat{\beta}_U(x, y; F_{N,K}^{(k)})=2h(x, y)-\hat{\alpha}_U(x; F_{N,K}^{(k)})-\hat{\alpha}_U(x; F_{N,K}^{(k)})-2\hat{\theta}_{UN,K}^{{(k)}}$.
Thus, $U_{N,K}^{*}=N^{-1}\sum_{k=1}^{K}n_kU_{N,K}^{*(k)}$ is in the form of \eqref{eq:exp1} with $\hat{\theta}_{UN,K}=N^{-1}\sum_{k=1}^{K}n_k\hat{\theta}_{UN,K}^{{(k)}}$ and remainder term 
$R^*_{UN, K}=N^{-1}\sum_{k=1}^{K}n_kR^{*(k)}_{UN, K}$. Represent $\hat{\alpha}_U(x; F_{N,K}^{(k)})$ and $\hat{\beta}_U(x, y; F_{N,K}^{(k)})$ in term of $\alpha_U(x; F)$ and $\beta_U(x, y; F)$, we have
\begin{align*}
\hat{\alpha}_U(x; F_{N,K}^{(k)}) = & \alpha_U(x; F)-n_k^{-1}\sum_{i=1}^{n_k}\alpha_U(X_{k,i}; F)+n_k^{-1}\sum_{i=1}^{n_k}\beta_U(x, X_{k,i}; F) \\
& -n_k^{-2}\sum_{i=1}^{n_k}\sum_{j=1}^{n_k}\beta_U(X_{k,i}, X_{k,j}; F), \\
\hat{\beta}_U(x, y; F_{N,K}^{(k)}) = & \beta_U(x, y; F) - n_k^{-1}\sum_{i=1}^{n_k}\beta_U(x, X_{k,i}; F)-n_k^{-1}\sum_{i=1}^{n_k}\beta_U(X_{k,i}, y; F) \\
& +n_k^{-2}\sum_{i=1}^{n_k}\sum_{j=1}^{n_k}\beta_U(X_{k,i}, X_{k,j}; F).
\end{align*}
By simple algebra, it is checked that $\sum_{i=1}^{n_k}\hat{\alpha}_U(X_{k,i};F_{N,K}^{(k)})=0$, $\hat{\beta}_U(x, y; F_{N,K}^{(k)})$ is symmetric in $x$ and $y$, and $\sum_{i=1}^{n_k}\{\hat{\beta}_U(X_{k,i}, y; F_{N,K}^{(k)})\}=0$ for any $y\in S(F)$. Furthermore, according to the condition $\E\{h(X_1,X_2)\}^2<\infty$ and $\E\{h(X_1,X_1)\}^2<\infty$, $\sup\limits_{x\in S(F)}|\hat{\alpha}_U(x; F_{N,K}^{(k)})-\alpha_U(x;F)|=o_p(1)$ and $\sup\limits_{x,y\in S(F)}|\hat{\beta}_U(x, y; F_{N,K}^{(k)})-\beta_U(x,y;F)|=o_p(1)$ are easily verifiable by law of large numbers.

Note that $R^{*(k)}_{UN, K}=2n_k^{-2}(n_k-1)^{-1}\sum_{1\leq i<j\leq n_k}\hat{\beta}_U(X_{k,i}^*, X_{k,j}^*; F_{N,K}^{(k)})$, $\E\big(R^{*(k)}_{UN, K}\big|F_{N,K}^{(k)}\big)=0$ and $\E\big(R^{*(k)2}_{UN, K}\big|F_{N,K}^{(k)}\big)=O_p(n_k^{-4})$, from which $$\P\left\{|R^*_{UN, K}| \geq N^{-1/2}(ln N)^{-1}\big|F_{N,K}^{(1)},\ldots,F_{N,K}^{(K)}\right\} = o_p(1)$$ can be verified by Cauchy-Schwarz inequality and the condition that $K=O(N^{\tau'})$ for a positive $\tau'<1$. Thus the conditions in Theorem \ref{theo:edg_T_SC_N_boot1} are all satisfied by $U_{N,K}^{*}$. Finally, let $\hat{\sigma}^{2}_{\alpha,UN,K} =N^{-1}\sum_{k=1}^{K}n_k\E\{\hat{\alpha}_U^2(X^*_{k,1}; F_{N,K}^{(k)})|F_{N,K}^{(k)}\}$, Theorem \ref{theo:edg_T_SC_N_boot1} implies that $\hat{\sigma}^{2}_{\alpha,UN,K}-\sigma_{\alpha,U}^2=o_p(1)$ and
\begin{align*}
\sup\limits_{x \in \mathbf{R}}\big|\P\big\{N^{1/2}\hat{\sigma}_{\alpha,UN,K}^{-1}(U^*_{N, K}-\hat{\theta}_{UN,K}) \leq x\big| & F_{N,K}^{(1)},\ldots,F_{N,K}^{(K)}\big\}  \\
& - \P\big\{N^{1/2}\sigma_{\alpha,U}^{-1}(U_{N, K}-\theta_U) \leq x\big\}\big| = o_p(1).
\end{align*}
%Similarly, we can check the conditions in Theorem \ref{theo:edg_T_SC_N_boot1} are satisfied by U-statistics.

\subsection{Discussions on the convergence rate of the pseudo-distributed bootstrap}

Under the conditions in Theorem \ref{corr:pdb1}, we assume that %the remainder terms 
$\{R_{N,K}^{(k)}\}_{k=1}^K$ are of small  order such that they can be %dominated and thus 
omitted in the following discussion. %Furthermore, suppose $n_k=NK^{-1}$ for $k=1,\ldots,K$. 
Assume $\alpha(X_1;F)$ is non-lattice, $\E\left|\alpha(X_1;F)\right|^4<\infty$ and $\E\left|\beta(X_1, X_2; F)\right|^4<\infty$, by Theorem 5.18 of \citet{Petrov1998},
\begin{align*}
& \P\big\{K^{1/2}\bar{s}^{-1}(\mathcal{T}_{N,K}^{*}-N^{1/2}K^{-1/2}T_{N,K}) \leq x\big|F_{K, \mathcal{T}}\big\} \\
= & \Phi(x) -6^{-1                                                                                                              } K^{-1/2}\bar{s}^{-3}\E\big\{\big(\mathcal{T}_{N,K}^{*(1)}-N^{1/2}K^{-1/2}T_{N,K}\big)^3\big|F_{K,\mathcal{T}}\big\}(x^2-1)\phi(x) + O_p(K^{-1})
\end{align*}
uniformly in $x \in \mathbf{R}$ where $\bar{s}^2=K^{-1}\sum_{k=1}^{K}\big(\mathcal{T}_{N,K}^{(k)}-N^{1/2}K^{-1/2}T_{N,K}\big)^2$. Mentioned that $$\E\big\{\big(\mathcal{T}_{N,K}^{*(1)}-N^{1/2}K^{-1/2}T_{N,K}\big)^3\big| F_{K, \mathcal{T}}\big\}=O_p(n_k^{-1/2}+K^{-1/2}),$$ 
this clearly implies $$\sup\limits_{x \in \mathbf{R}}\left|\P\left\{K^{1/2}\bar{s}^{-1}(\mathcal{T}_{N,K}^{*}-N^{1/2}K^{-1/2}T_{N,K}) \leq x\big|F_{K, \mathcal{T}}\right\}-\Phi(x)\right|=O_p(N^{-1/2}+K^{-1}).$$
According to the proof of Theorem \ref{theo:uniform_convergence},
\begin{align*}
\sup\limits_{x \in \mathbf{R}}\left|\P\left\{N^{1/2}\sigma_{\alpha}^{-1}(T_{N,K}-\theta) \leq x \right\} - \Phi(x)\right| = O(N^{-1/2}),
\end{align*}
it follows that
\begin{align*}
& \sup\limits_{x \in \mathbf{R}}\left|\P\left\{K^{1/2}\bar{s}^{-1}(\mathcal{T}_{N,K}^{*}-N^{1/2}K^{-1/2}T_{N,K}) \leq x\big|F_{K, \mathcal{T}}\right\} - \P\left\{N^{1/2}\sigma_{\alpha}^{-1}(T_{N,K}-\theta) \leq  x\right\}\right| \\
= & O_p(N^{-1/2}+K^{-1}).
\end{align*}
This result indicates that the convergence rate of the pseudo-distributed bootstrap is at the order of $O_p(N^{-1/2}+K^{-1})$.

\subsection{Distributed distance covariance}\label{sec:dist_covar}

Distance covariance, introduced in \citet{SRB2007}, is a method that measures and tests dependence between two random vectors. In this section, we introduce the distributed distance covariance and its usage in testing independence for massive data.
% The distance covariance and correlation have been used in many literatures, for example, feature screening \citep{LZZ2012}, sufficient dimension reduction diagnostic \citep{CCZ2015} and testing mutual independence under high-dimensionality \citep{YZS2016}. 

Suppose $\bY$ and $\bZ$ are two random vectors having finite first moments and taking values in $\mathbf{R}^p$ and $\mathbf{R}^q$, respectively. %Let $\phi_{\bY}(\bt)$ and $\phi_{\bZ}(\bs)$ be the characteristic functions of $\bY$ and $\bZ$, and $\phi_{\bY, \bZ}(\bt,\bs)$ be the joint characteristic function of $\bY$ and $\bZ$. 
The population distance covariance between $\bY$ and $\bZ$ is
$$dcov^2(\bY,\bZ)=\int_{\mathbf{R}^{p+q}}\| \phi_{\bY,\bZ}(\bt,\bs)-\phi_{\bY}(\bt)\phi_{\bZ}(\bs) \|^2\omega(\bt,\bs)d\bt d\bs,$$
where $\phi_{\bY}(\bt)$, $\phi_{\bZ}(\bs)$ and $\phi_{\bY, \bZ}(\bt,\bs)$ are the characteristic functions of $\bY$, $\bZ$ and $\bX=(\bY^T, \bZ^T)^T$, respectively, and $\omega(\bt,\bs)=(c_pc_q\|\bt\|_p^{1+p}\|\bs\|_q^{1+q})^{-1}$ is a weight function with $c_d=\pi^{(1+d)/2}/\Gamma\{(1+d)/2\}$. % and $\|\ba\|_d$ is the Euclidean norm of $\ba\in\mathbf{R}^d$. For a complex-valued function $\phi$, $\|\phi\|^2=\phi\bar{\phi}$ where $\bar{\phi}$ is the conjugate of $\phi$. 
It is clear that $dcov^2(\bY,\bZ)$ equals zero if and only if $\bY$ and $\bZ$ are independent. This property makes the distance variance more versatile in detecting dependence than the classic Pearson's correlation. If $\E\|\bY\|_p+\E\|\bZ\|_q<\infty$,
\begin{align*}
	dcov^2(\bY,\bZ)=\E\|\bY-\bY'\|_p\|\bZ-\bZ'\|_q-2\E\|\bY-\bY'\|_p\|\bZ-\bZ''\|_q+\E\|\bY-\bY'\|_p\E\|\bZ-\bZ'\|_q,
\end{align*}
where $\bX'=(\bY'^{T}, \bZ'^{T})^T$ and $\bX''=(\bY''^{T}, \bZ''^{T})^T$ are independent copies of $\bX$.

Suppose $\mathfrak{X}_N=\{\bX_1, \ldots, \bX_N\}$ is a sample from $F$, where $\bX_i=(\bY_i^T, \bZ_i^T)^T$ for $i=1,\ldots,N$. Define $A_{ij}=\|\bY_i-\bY_j\|_p$ and $B_{ij}=\|\bZ_i-\bZ_j\|_q$ for $i,j=1,\ldots,N$. Then the $\mathcal{U}$-centered empirical distance covariance \citep{SR2014,YZS2016} is
\iffalse
\begin{align}\label{dcovn_def}
	dcov_N^2(\bY,\bZ)=\left\{N(N-3)\right\}^{-1}\sum_{i\neq j}\tilde{A}_{ij}\tilde{B}_{ij},
\end{align}
where
\begin{align*}
	\tilde{A}_{ij}=A_{ij}-(N-2)^{-1}\sum_{l_1=1}^{N}A_{l_1j}-(N-2)^{-1}\sum_{l_2=1}^{N}A_{il_2}+\left\{(N-1)(N-2)\right\}^{-2}\sum_{l_1=1}^{N}\sum_{l_2=1}^{N}A_{l_1l_2},
\end{align*}
and
\begin{align*}
	\tilde{B}_{ij}=B_{ij}-(N-2)^{-1}\sum_{l_1=1}^{N}B_{l_1j}-(N-2)^{-1}\sum_{l_2=1}^{N}B_{il_2}+\left\{(N-1)(N-2)\right\}^{-2}\sum_{l_1=1}^{N}\sum_{l_2=1}^{N}B_{l_1l_2},
\end{align*}
for $i\neq j$. By simple algebra,
\fi
\begin{align*}
	dcov_N^2(\bY,\bZ)= & \left\{N(N-1)\right\}^{-1}\sum_{i\neq j}A_{ij}B_{ij}+\left\{N(N-1)(N-2)\right\}^{-1}\sum_{i\neq j\neq l_1}A_{ij}B_{il_1} \\
					   & + \left\{N(N-1)(N-2)(N-3)\right\}^{-1}\sum_{i\neq j\neq l_1\neq l_2}A_{ij}B_{l_1l_2},
\end{align*}
which is a U-statistic of degree $4$ and an unbiased estimator of $dcov^2(\bY,\bZ)$.
%from which we can see that $dcov_N^2(\bY,\bZ)$ is an unbiased estimator of $dcov^2(\bY,\bZ)$. Moreover, $dcov_N^2(\bY,\bZ)$ is a U-statistic of degree $4$.

The computing complexity and memory requirement of the distance covariance are both at the order of $O(N^2)$, limit its usage when $N$ is large.
%there are two issues when computing $dcov_N^2(\bY,\bZ)$. First, the computing complexity of $dcov_N^2(\bY,\bZ)$ is at the order of $O(N^2)$, which is too expensive when $N$ is large. Second, the memory requirement in calculating $dcov_N^2(\bY,\bZ)$ is of order $O(N^{2})$, which makes the computing of $dcov_N^2(\bY,\bZ)$ less feasible for a larger $N$. For example, when $N=10^5$, it requires more than $70$ Gigabyte storage in memory in order to complete the calculation of $dcov_N^2(\bY,\bZ)$. 
The proposed distributed version of the distance covariance has much to offer under the massive data scenario. Suppose the entire data $\mathfrak{X}_N$ are divided into $K$ sub-samples with the $k$-th subset $\mathfrak{X}_{N,K}^{(k)}=\{\bX_{k,1},\ldots,\bX_{k,n_k}\}$ of size $n_k$ for $k=1,\ldots,K$. Denote $dcov_{N,K}^{2(k)}(\bY,\bZ)$ as the empirical distance covariance based on $\mathfrak{X}_{N,K}^{(k)}$. Then the distributed distance covariance is defined as
\begin{align*}
	dcov_{N,K}^{2}(\bY,\bZ)=N^{-1}\sum_{k=1}^{K}n_kdcov_{N,K}^{2(k)}(\bY,\bZ).
\end{align*}

It is easy to see that $dcov_{N,K}^{2}(\bY,\bZ)$ is also an unbiased estimator of % the population distance covariance 
$dcov^2(\bY,\bZ)$, and it enjoys computational advantage over $dcov_N^2(\bY,\bZ)$. Specially, the memory requirement of computing $dcov_{N,K}^{2}(\bY,\bZ)$ is only $O(K^{-2}N^2)$, which makes $dcov_{N,K}^{2}(\bY,\bZ)$ more feasible when the size of the data is huge.
%The distributed distance covariance enjoys two advantages in computing. Assume that $\{n_k\}_{k=1}^K$ have the same order $O(K^{-1}N)$, then the computing time of $dcov_{N,K}^{2}(\bY,\bZ)$ is of order $O(K^{-1}N^{2})$, which is much less than $O(N^2)$, the computational complexity of $dcov_{N}^{2}(\bY,\bZ)$. Moreover, % Besides that, computing $dcov_{N,K}^{2}(\bY,\bZ)$ can be easily parallelized which is suited to modern parallel and distributed computing architectures.
Now we consider testing independence between $\bY$ and $\bZ$ using the distance covariance. That is to test the hypothesis
\begin{align}\label{ind_hypo}
	H_0:\bY \text{~and~} \bZ \text{~are independent~~~~~~~~versus~~~~~~~~}H_1:\bY \text{~and~} \bZ \text{~are dependent}.
\end{align}
\indent Under the null hypothesis that $\bY$ and $\bZ$ are independent, if $\E\|\bY\|^2_p+\E\|\bZ\|^2_q<\infty$, the empirical distance covariance $dcov_N^2(\bY,\bZ)$ is a degenerate U-statistic that has the following representation \citep{YZS2016}:
\begin{align*}
	dcov_N^2(\bY,\bZ) = N^{-2}\sum_{1\leq i<j\leq N}\beta(\bX_i, \bX_j; F) + R_N,
\end{align*}
where $\beta(\bX_i, \bX_j; F)=2\mathbf{U}(\bY_i,\bY_j)\mathbf{V}(\bZ_i,\bZ_j)$ with $\mathbf{U}(y,y')=\|y-y'\|_p-\E\|y-\bY'\|_p-\E\|\bY-y'\|_p+\E\|\bY-\bY'\|_p$ and $\mathbf{V}(z,z')=\|z-z'\|_q-\E\|z-\bZ'\|_q-\E\|\bZ-z'\|_q+\E\|\bZ-\bZ'\|_q$, and $R_N=R(\mathfrak{X}_N; F)$ is the remainder term satisfies $\E(R_N)=0$ and $\Var(R_N)=O(N^{-3})$. It is easy to verify that under the null hypothesis, $\E\{\beta(\bX_1, \bX_2; F)|\bX_1\}=0$ and $\E\big\{\beta(\bX_1, \bX_2; F)\big\}^2=4\E\big\{\mathbf{U}(\bY_1,\bY_2)\big\}^2\E\big\{\mathbf{V}(\bZ_1,\bZ_2)\big\}^2\equiv\sigma_{\beta}^2<\infty$.

If $dcov_N^2(\bY,\bZ)$ is used to test \eqref{ind_hypo}, we need to obtain a reference distribution for $dcov_N^2(\bY,\bZ)$ using the bootstrap \citep{Arcones1992} or random permutation \citep{SRB2007} on the entire dataset. 
For massive dataset, computing $dcov_N^2(\bY,\bZ)$ itself is a big issue. Moreover, the bootstrap and random permutation on the entire dataset are both computationally expensive as we need to re-calculate the statistic for each resample or permuted sample. 
%using $dcov_N^2(\bY,\bZ)$ as the test statistic would be computationally too expensive. 
Thus we consider using the distributed distance covariance $dcov_{N,K}^{2}(\bY,\bZ)$. 
%Under the null hypothesis, if $\E\|\bY\|^2_p+\E\|\bZ\|^2_q<\infty$, $dcov_{N,K}^{2}(\bY,\bZ)$ can be rewritten as
%\begin{align}\label{eq:dcov_N_K_repre}
%	dcov_{N,K}^2(\bY,\bZ) = N^{-1}\sum_{k=1}^{K}n_k^{-1}\sum_{1\leq i<j\leq n_k}\beta(\bX_{k,i}, \bX_{k,j}; F) + R_{N,K},
%\end{align}
%where $R_{N,K}=N^{-1}\sum_{k=1}^{K}n_kR\big(\mathfrak{X}_{N,K}^{(k)}; F\big)$ is the remainder term that satisfies $\E(R_{N,K})=0$ and $\Var(R_{N,K})=O(K^2N^{-3})$. Thus 
According to Theorem \ref{theo:T_N_K_degenerate}, we have the following asymptotic result concerning $dcov_{N,K}^{2}(\bY,\bZ)$.

\begin{corollary}\label{corr:dvoc_asy_nor}
	Under Condition \ref{condition:K}, assume there exists a constant $\delta'>0$ such that $\E\|\bY\|^{2+\delta'}_p+\E\|\bZ\|^{2+\delta'}_q<\infty$,
	% There exist positive constants $c_1$ and $c_2$ such that $c_1\leq \inf_{k_1,k_2}n_{k_1}/n_{k_2}\leq\sup_{k_1,k_2}n_{k_1}/n_{k_2}\leq c_2$. In addition, if $K\to\infty$ and $K=o(N)$, 
	then under the null hypothesis that $\bY$ and $\bZ$ are independent,
	\begin{align*}
		2^{1/2}K^{-1/2}N\sigma_{\beta}^{-1}dcov_{N,K}^2(\bY,\bZ) \xrightarrow{d} \mathcal{N}(0,1)~~~~\text{as}~N, K\to\infty.
	\end{align*}
\end{corollary}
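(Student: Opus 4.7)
The plan is to realize this corollary as a direct specialization of Theorem \ref{theo:T_N_K_degenerate}(ii). Under the null hypothesis the representation recalled just before the corollary shows that $dcov_N^2(\bY,\bZ)$ is a degenerate U-statistic of degree four that fits the framework (\ref{eq:T_N}) with $\theta = dcov^2(\bY,\bZ) = 0$, a vanishing linear part (so Condition \ref{condition:alpha} holds with $\sigma_\alpha^2 = 0$), quadratic kernel $\beta(\bX_1,\bX_2;F) = 2\mathbf{U}(\bY_1,\bY_2)\mathbf{V}(\bZ_1,\bZ_2)$ with $\sigma_\beta^2 = 4\,\E\mathbf{U}^2\,\E\mathbf{V}^2 \in (0,\infty)$, and remainder $R_N$ satisfying $\E(R_N) = 0$ and $\Var(R_N) = O(N^{-3})$. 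Averaging the per-block versions gives $dcov_{N,K}^2(\bY,\bZ)$ the exact form (\ref{eq:T_N_K}), so it suffices to verify the three hypotheses of Theorem \ref{theo:T_N_K_degenerate}(ii): (a) the $(2+\delta')$-moment bound on $\beta$, (b) the stochastic bound $R_{N,K} = o_p(K^{1/2}N^{-1})$, and (c) the diverging-$K$ regime (which is allowed by Condition \ref{condition:K}).

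For (a) I would exploit the independence of $\bY$ and $\bZ$ under $H_0$. Since $\mathbf{U}(\bY_1,\bY_2)$ depends only on the $\bY$-coordinates and $\mathbf{V}(\bZ_1,\bZ_2)$ only on the $\bZ$-coordinates, the two factors are independent, so
\begin{align*}
\E|\beta(\bX_1,\bX_2;F)|^{2+\delta'} = 2^{2+\delta'}\,\E|\mathbf{U}(\bY_1,\bY_2)|^{2+\delta'}\,\E|\mathbf{V}(\bZ_1,\bZ_2)|^{2+\delta'}.
\end{align*}
Each factor is controlled via the triangle inequality: $|\mathbf{U}(\bY_1,\bY_2)| \le \|\bY_1-\bY_2\|_p + 2\,\E\|\bY_1\|_p + \E\|\bY_1-\bY_2\|_p$, and similarly for $\mathbf{V}$, so the assumption $\E\|\bY\|_p^{2+\delta'} + \E\|\bZ\|_q^{2+\delta'} < \infty$ yields the required bound after a routine $c_r$-inequality argument.

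For (b) I would apply Condition \ref{condition:R_N} with $\tau_1 = \infty$ (unbiasedness, $b_1 = 0$) and $\tau_2 = 3$, which is inherited from the Hoeffding decomposition of $dcov_{N,K}^{2(k)}$ on each block, giving $\E(R_{N,K}^{(k)}) = 0$ and $\Var(R_{N,K}^{(k)}) = O(n_k^{-3})$. Independence across blocks then produces $\Var(R_{N,K}) = N^{-2}\sum_{k=1}^{K} n_k^2\,\Var(R_{N,K}^{(k)}) = O\bigl(N^{-2}\sum_k n_k^{-1}\bigr) = O(K^{2}N^{-3})$ by Condition \ref{condition:K}, so $R_{N,K} = O_p(K\,N^{-3/2}) = o_p(K^{1/2}N^{-1})$ whenever $K/N \to 0$, which is precisely the content of Condition \ref{condition:K}. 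Invoking Theorem \ref{theo:T_N_K_degenerate}(ii) then delivers the claimed CLT.

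The main obstacle is step (a): distance covariance hides a symmetrization inside $\mathbf{U}$ and $\mathbf{V}$, and one must be careful that the $(2+\delta')$-moment of $\beta$ really factorizes, which only works under $H_0$ through independence of $\bY$ and $\bZ$; the divisibility-type step (b) is more routine once one accepts that the Hoeffding structure of a degree-four U-statistic applies blockwise just as it does for the global statistic.
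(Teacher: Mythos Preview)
Your proposal is correct and follows exactly the approach the paper indicates: the corollary is stated immediately after the degenerate U-statistic representation of $dcov_N^2(\bY,\bZ)$ with the remark ``According to Theorem \ref{theo:T_N_K_degenerate}, we have the following asymptotic result,'' and you have supplied precisely the verification of the moment condition on $\beta$ and the remainder bound $R_{N,K}=o_p(K^{1/2}N^{-1})$ needed to invoke Theorem \ref{theo:T_N_K_degenerate}(ii). The only cosmetic slip is the triangle-inequality bound on $|\mathbf{U}(\bY_1,\bY_2)|$, where the two middle terms $\E\|\bY_1-\bY'\|_p$ and $\E\|\bY-\bY_2\|_p$ are random (depending on $\bY_1$, $\bY_2$) rather than the constants you wrote, but since each is dominated by $\|\bY_i\|_p+\E\|\bY\|_p$ your $c_r$-inequality argument still goes through.
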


%In order to implement the test, we need to get a consistent estimator of $\sigma_{\beta}^2$, or equivalently, estimators of $\E\left\{\mathbf{U}(\bY_1,\bY_2)\right\}^2$ and $\E\left\{\mathbf{V}(\bZ_1,\bZ_2)\right\}^2$. 
We need to estimate $\sigma_{\beta}^2$ in order to implement the test. Mentioned that $dcov^2(\bY, \bZ)=\E\big\{\mathbf{U}(\bY_1,\bY_2)\mathbf{V}(\bZ_1,\bZ_2)\big\}$, $dcov^2(\bY, \bY)=\E\big\{\mathbf{U}(\bY_1,\bY_2)\big\}^2$ and $dcov^2(\bZ, \bZ)=\E\big\{\mathbf{V}(\bZ_1,\bZ_2)\big\}^2 $, an distributed version of unbiased estimator for $\sigma_{\beta}^2$ under the null is
%. It is straightforward to use the distributed version of the empirical distance variances to estimate $dcov^2(\bY, \bY)$ and $dcov^2(\bZ, \bZ)$. Let
\iffalse
\begin{align*}
	& dcov_{N,K}^{2(k)}(\bY,\bY)=\left\{n_k(n_k-3)\right\}^{-1}\sum_{1\leq i\neq j\leq n_k}\tilde{A}_{(k,i)(k,j)}^2, \\
	& dcov_{N,K}^{2(k)}(\bZ,\bZ)=\left\{n_k(n_k-3)\right\}^{-1}\sum_{1\leq i\neq j\leq n_k}\tilde{B}_{(k,i)(k,j)}^2,
\end{align*}
then
\fi
\begin{align*}
	\hat{\sigma}_{\beta, N, K}^2 = 4\left\{N^{-1}\sum_{k=1}^{K}n_kdcov_{N,K}^{2(k)}(\bY,\bY)\right\}\left\{N^{-1}\sum_{k=1}^{K}n_kdcov_{N,K}^{2(k)}(\bZ,\bZ)\right\}.
\end{align*}
%Then $\hat{\sigma}_{\beta, N, K}^2$ is an unbiased estimator of $\sigma_{\beta}^2$ under the null hypothesis and its 
The consistency of the variance estimator $\hat{\sigma}_{\beta, N, K}^2$ is established in the next theorem.

\begin{theorem}\label{theo:dcov_consist}
	Under the conditions in Corollary \ref{corr:dvoc_asy_nor}, if $\E\|\bY\|^4_p+\E\|\bZ\|^4_q<\infty$, then,
	\begin{align*}
		\hat{\sigma}_{\beta, N, K}^2/\sigma_{\beta}^2 \xrightarrow{p} 1 ~~~~\text{as}~~ N\to\infty.
	\end{align*}
\end{theorem}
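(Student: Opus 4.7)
The plan is to factor the variance estimator as
$$\hat{\sigma}_{\beta,N,K}^2 = 4\,\widehat{D}_{\bY,N,K}\,\widehat{D}_{\bZ,N,K},$$
where $\widehat{D}_{\bY,N,K} = N^{-1}\sum_{k=1}^{K} n_k\, dcov_{N,K}^{2(k)}(\bY,\bY)$ and $\widehat{D}_{\bZ,N,K}$ is defined analogously. Since $\sigma_\beta^2 = 4\,dcov^2(\bY,\bY)\,dcov^2(\bZ,\bZ)$ under the null (and is strictly positive under mild non-degeneracy of $\bY$ and $\bZ$), Slutsky's theorem reduces the statement to showing $\widehat{D}_{\bY,N,K}\xrightarrow{p} dcov^2(\bY,\bY)$ and the corresponding statement for $\bZ$.

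First, $dcov_N^2(\bY,\bY)$ is itself a U-statistic of degree four with a symmetric kernel, and hence admits Hoeffding's decomposition, placing it in the class (\ref{eq:T_N}) with $\E(R_N)=0$ and $\Var(R_N)=O(N^{-3})$. The moment assumption $\E\|\bY\|_p^4<\infty$ ensures the kernel has finite second moment (its summands are products of the form $\|\bY_{i_1}-\bY_{i_2}\|_p\cdot\|\bY_{i_3}-\bY_{i_4}\|_p$), so the first- and second-order projection variances $\sigma_{\alpha,\bY}^2$ and $\sigma_{\beta,\bY}^2$ are both finite. The block U-statistics $dcov_{N,K}^{2(k)}(\bY,\bY)$ inherit this decomposition, and thus $\widehat{D}_{\bY,N,K}$ takes the distributed form (\ref{eq:T_N_K}). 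Applying Theorem \ref{theo:mse1} (with $\tau_1=\infty$ and $\tau_2=3$) together with Condition \ref{condition:K} yields $\Bias(\widehat{D}_{\bY,N,K})=0$ and $\Var(\widehat{D}_{\bY,N,K}) = \sigma_{\alpha,\bY}^2 N^{-1} + O(KN^{-2})=O(N^{-1})$, so Chebyshev's inequality delivers $\widehat{D}_{\bY,N,K}\xrightarrow{p} dcov^2(\bY,\bY)$.

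The identical reasoning with the roles of $\bY$ and $\bZ$ interchanged, under $\E\|\bZ\|_q^4<\infty$, gives $\widehat{D}_{\bZ,N,K}\xrightarrow{p} dcov^2(\bZ,\bZ)$. The continuous mapping theorem applied to the product then yields $\hat{\sigma}_{\beta,N,K}^2\xrightarrow{p}\sigma_\beta^2$, and dividing by $\sigma_\beta^2>0$ finishes the proof. Note that because only a variance bound plus unbiasedness is needed, neither a diverging-$K$ assumption nor the non-lattice condition is required here; Condition \ref{condition:K} alone (in particular $K=o(N)$) suffices.

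The only non-routine step is the bookkeeping in the second paragraph: writing out the Hoeffding decomposition of the $\mathcal{U}$-centered degree-four U-statistic $dcov_N^2(\bY,\bY)$ and verifying that the projected kernels have finite second moments under $\E\|\bY\|_p^4<\infty$. Once this is established, the result is a direct consequence of the distributed-statistic MSE bound in Theorem \ref{theo:mse1}.
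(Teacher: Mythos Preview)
Your proof is correct and close in spirit to the paper's, though organized slightly differently. The paper works with the product directly: under the null, $\widehat{D}_{\bY,N,K}$ and $\widehat{D}_{\bZ,N,K}$ are independent, so $\E(\hat\sigma_{\beta,N,K}^2)=\sigma_\beta^2$ and $\Var(\hat\sigma_{\beta,N,K}^2)$ factors into terms involving $\Var(\widehat{D}_{\bY,N,K})$ and $\Var(\widehat{D}_{\bZ,N,K})$; each of these is $O(N^{-1})$ because the block statistics $dcov_{N,K}^{2(k)}(\bY,\bY)$ are degree-four U-statistics with variance $O(n_k^{-1})$ under the fourth-moment assumption. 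You instead establish $\widehat{D}_{\bY,N,K}\xrightarrow{p}dcov^2(\bY,\bY)$ and $\widehat{D}_{\bZ,N,K}\xrightarrow{p}dcov^2(\bZ,\bZ)$ separately and combine via continuous mapping. Both routes rest on the same key ingredient (the $O(n_k^{-1})$ block variance), so the difference is purely organizational; your factor-by-factor argument is marginally cleaner in that the convergence of each factor does not itself require the null-hypothesis independence, and invoking Theorem~\ref{theo:mse1} is fine though a direct variance-of-average computation would suffice.
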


\begin{proof}
	
	Under the null hypothesis, $\E\big(\hat{\sigma}_{\beta,N,K}^2\big)=\sigma_{\beta}^2$, thus it is sufficient to show that $\Var\big(\hat{\sigma}_{\beta,N,K}^2\big)=o(1)$ as $N\to\infty$. Mentioned that
	\begin{align*}
	\Var\big(\hat{\sigma}_{\beta,N,K}^2\big) = & \Var\left[4\left\{N^{-1}\sum_{k=1}^{K}n_kdcov_{N,K}^{2(k)}(\bY,\bY)\right\}\left\{N^{-1}\sum_{k=1}^{K}n_kdcov_{N,K}^{2(k)}(\bZ,\bZ)\right\}\right] \\
	= & 16\E\left\{N^{-1}\sum_{k=1}^{K}n_kdcov_{N,K}^{2(k)}(\bY,\bY)\right\}^2\E\left\{N^{-1}\sum_{k=1}^{K}n_kdcov_{N,K}^{2(k)}(\bZ,\bZ)\right\}^2 \\
	& - 16dcov^4(\bY,\bY)dcov^4(\bZ,\bZ) \\
	= & 16N^{-2}\sum_{k=1}^{K}n_k^2\Var\left\{dcov_{N,K}^{2(k)}(\bY,\bY)\right\}dcov^4(\bZ,\bZ) \\
	& + 16N^{-2}\sum_{k=1}^{K}n_k^2\Var\left\{dcov_{N,K}^{2(k)}(\bZ,\bZ)\right\}dcov^4(\bY,\bY) \\
	& + 16N^{-4}\left[\sum_{k=1}^{K}n_k^2\Var\left\{dcov_{N,K}^{2(k)}(\bY,\bY)\right\}\right]\left[\sum_{k=1}^{K}n_k^2\Var\left\{dcov_{N,K}^{2(k)}(\bZ,\bZ)\right\}\right].
	\end{align*}
	Since $dcov_{N,K}^{2(k)}(\bY,\bY)$ and $dcov_{N,K}^{2(k)}(\bZ,\bZ)$ are both U-statistics of degree $4$, under the condition that $\E\|\bY\|^4_p+\E\|\bZ\|^4_q<\infty$, $\Var\big\{dcov_{N,K}^{2(k)}(\bY,\bY)\big\}=O(n_k^{-1})$ and $\Var\big\{dcov_{N,K}^{2(k)}(\bZ,\bZ)\big\}=O(n_k^{-1})$, these immediately lead to $\Var\big(\hat{\sigma}_{\beta,N,K}^2\big)=o(1)$ and the proof is complete.
	
\end{proof}

By Slutsky's theorem,  $2^{1/2}K^{-1/2}N\hat{\sigma}_{\beta, N, K}^{-1}dcov_{N,K}^2(\bY,\bZ)$ converges to standard normal distribution under the null. Thus, a consistent test with nominal significant level $\tau$ reject $H_0$ if
\begin{align}\label{eq:dist_covar_test1}
	2^{1/2}K^{-1/2}N\hat{\sigma}_{\beta, N, K}^{-1}dcov_{N,K}^2(\bY,\bZ) > z_{\tau},
\end{align}
where $z_{\tau}$ is the $\tau$-th upper-quantile of the standard normal distribution.%, and the test procedure is ensured to have the nominal level of significance asymptotically. %Furthermore, the implementation of the test includes calculating $dcov_{N,K}^2(\bY,\bZ)$ and $\hat{\sigma}_{\beta, N, K}^2$, which both require $O(K^{-1}N^2)$ order computing time. This feature makes the testing procedure based on the distributed distance covariance more friendly to massive datasets. For each subset $\mathfrak{X}_{N,K}^{(k)}$, after getting $\tilde{A}_{(k,i)(k,j)}$ and $\tilde{B}_{(k,i)(k,j)}$ for different $i$ and $j$, we can obtain $dcov_{N,K}^{2(k)}(\bY,\bZ)$, $dcov_{N,K}^{2(k)}(\bY,\bY)$ and $dcov_{N,K}^{2(k)}(\bZ,\bZ)$ for $k=1,\ldots,K$, which are sufficient to fulfill the test.

\begin{remark}
	When $\bY$ and $\bZ$ are dependent and $dcov_N^2(\bY,\bZ)$ is non-degenerate, $dcov_{N,K}^{2}(\bY,\bZ)$ still converges to a normal distribution under certain moment conditions (Theorem \ref{theo:T_N_K_asym_normal_dist}). That is,
	\begin{align*}
		[\Var\{dcov_{N,K}^{2}(\bY,\bZ)\}]^{-1/2}\{dcov_{N,K}^{2}(\bY,\bZ)-dcov^{2}(\bY,\bZ)\}\xrightarrow{d} \mathcal{N}(0,1)
	\end{align*}
	as $N\to\infty$. In addition, we can use the pseudo-distributed bootstrap to approximate the distribution of $dcov_{N,K}^{2}(\bY,\bZ)$ and get a consistent estimator of $\Var\{dcov_{N,K}^{2}(\bY,\bZ)\}$ when $K\to\infty$. %, denoted as $\hat{\sigma}_{PDB}^{2}$. Then by Slutsky's Theorem,
	%	\begin{align*}
	%		\hat{\sigma}_{PDB}^{-1}\{dcov_{N,K}^{2}(\bY,\bZ)-dcov^{2}(\bY,\bZ)\}\xrightarrow{d} \mathcal{N}(0,1).
	%	\end{align*}
	%This is another advantage of the pseudo-distributed bootstrap, which can provide a valid variance estimator under the alternative. For $\hat{\sigma}_{\beta, N, K}^2$ in \eqref{eq:dcov_var_est}, it is only consistent under the null.
\end{remark}

Next we consider the special case when each data block has the same size $NK^{-1}$. %that is, $n_1=\cdots=n_K=NK^{-1}$. 
Define
\begin{align*}
	\hat{\sigma}_{N,K}^2 = K^{-2}\sum_{k=1}^{K}\left\{dcov_{N,K}^{2(k)}(\bY,\bZ)-dcov_{N,K}^{2}(\bY,\bZ)\right\}^2.
\end{align*}
According to the proof of Theorem \ref{corr:pdb2}, $\hat{\sigma}_{N,K}^2$ is a consistent estimator of $\Var\{dcov_{N,K}^{2}(\bY,\bZ)\}$ under the conditions in Corollary \ref{corr:dvoc_asy_nor}. Thus $\hat{\sigma}_{N,K}^{-1}dcov_{N,K}^{2}(\bY,\bZ)$ is asymptotic standard normal under the null. Therefore, a consistent test with nominal significant level $\tau$ reject $H_0$ if
\begin{align}\label{eq:dist_covar_test3}
	\hat{\sigma}_{N,K}^{-1}dcov_{N,K}^{2}(\bY,\bZ) > z_{\tau}.
\end{align}

\begin{remark}
	When each data block has the same size, even when $dcov_N^2(\bY,\bZ)$ is non-degenerate, $\hat{\sigma}_{N,K}^2$ is still a consistent estimator of $\Var\{dcov_{N,K}^{2}(\bY,\bZ)\}$ if $K\to\infty$ (proof of Theorem \ref{corr:pdb1}). Thus $\hat{\sigma}_{N,K}^{-1}dcov_{N,K}^{2}(\bY,\bZ)$ is asymptotic normal with unit variance regardless $dcov_N^2(\bY,\bZ)$ is degenerate or not. %, where $\hat{\sigma}_{PDB}^{2}$ is the pseudo-distributed bootstrap variance estimator of $dcov_{N,K}^{2}(\bY,\bZ)$. 
	Therefore, we can use $\hat{\sigma}_{N,K}^{-1}dcov_{N,K}^{2}(\bY,\bZ)$ to measure the dependence between $\bY$ and $\bZ$ for massive data. The larger $\hat{\sigma}_{N,K}^{-1}dcov_{N,K}^{2}(\bY,\bZ)$ is, the more dependent $\bY$ and $\bZ$ are.
\end{remark}

%\section{Additional simulation results}\label{sec:sup_simu}

%\subsection{Additional simulations on Gini's mean difference}

\section{Simulations on distributed distance covariance}\label{sec:sup_simu}

The distributed version of distance covariance and its usage in testing independence between two multivariate random vectors has been studied in Section \ref{sec:dist_covar}. %By formulating the distributed distance covariance, we can reduce the requirements on both computing time and memory space, which makes it meet the demands for handling massive datasets. 
In this section, we investigate the performance of the distributed distance covariance $dcov_{N,K}^2(\bY,\bZ)$ in testing independence by numerical studies. 
%It is worth noting that for the conventional distance covariance $dcov_N^2(\bY,\bZ)$, random permutations are needed to implement the test \citep{SRB2007}, which is computationally costly. While for the distributed distance covariance $dcov_{N,K}^2(\bY,\bZ)$, we can use the result in (\ref{eq:dist_covar_test1}) which based on the variance estimator $\hat{\sigma}_{\beta, N, K}^2$ or the pseudo-distributed bootstrap (Corollary \ref{corr:dcov_pdb}) to accomplish this goal, and permutations are no longer needed for these two approaches. Moreover, the testing procedures based on $dcov_{N,K}^2(\bY,\bZ)$ only need $O(K^{-2}N^2)$ memory space, which is much less than that of $dcov_N^2(\bY,\bZ)$, which is at the order of $O(N^2)$.
All simulation results in this section are based on $1000$ iterations with the nominal significant level at $5\%$. The sample size is fixed at $N=100000$ and the number of data blocks $K$ is selected in the set $\{20, 50, 100, 200, 500, 1000, 2000, 5000\}$.

For the null hypothesis, we generated i.i.d.~samples $\{\bY_1, \ldots, \bY_N\}$ and $\{\bZ_1, \ldots, \bZ_N\}$ independently from distributions $\bG_1$ and $\bG_2$, respectively. Three different combinations of $\bG_1$ and $\bG_2$ were considered: (I) $\bG_1$ and $\bG_2$ are both $\mathcal{N}(\mathbf{0}_{p}, \mathbf{I}_{p})$, where $\mathbf{I}_{p}$ is the $p$-dimensional identity matrix; (II) $\bG_1$ is $\mathcal{N}(\mathbf{0}_{p}, \mathbf{I}_{p})$, for $\bG_2$, its $p$ components are i.i.d.~from student-$t$ distribution with $6$ degrees of freedom; (III) For both $\bG_1$ and $\bG_2$, their components are i.i.d.~from student-$t$ distribution with $6$ degrees of freedom. %Scenario (I) and (III) are also employed in \citet{SRB2007}. 
The dimension $p$ was chosen as $5$, $10$, $20$ and $40$ for each scenario.

Table \ref{tab:simu_size} reports the empirical sizes of %the tests based on the distributed distance covariance $dcov_{N,K}^2(\bY,\bZ)$. 
$T_{\Var}$ and $T_{\mathrm{PDB}}$, which stand for the testing procedures based on \eqref{eq:dist_covar_test1} and \eqref{eq:dist_covar_test3}, respectively. Table \ref{tab:simu_size} shows that the empirical sizes of both methods are close to the nominal level $5\%$ for all combinations of $p$ and $K$ under all three scenarios. Thus, these two test procedures both have good control of Type-I error for a wide range of $K$. In addition, the performance of these two methods is very comparable to each other. However, for relatively large $K$, $T_{\mathrm{PDB}}$ has slightly better control of the empirical sizes than $T_{\Var}$.

\renewcommand{\arraystretch}{1}
\begin{table}[h]
	\centering
	\caption{Sizes of Independence tests based on $dcov_{N,K}^2(\bY,\bZ)$. $T_{\Var}$: test using variance estimation in (\ref{eq:dist_covar_test1}); $T_{\mathrm{PDB}}$: test using the pseudo-distributed bootstrap.}
	\begin{tabular}{c|cccccccc}
		\hline
		$K$	   & \multicolumn{2}{c}{$p=5$} & \multicolumn{2}{c}{$p=10$} & \multicolumn{2}{c}{$p=20$} & \multicolumn{2}{c}{$p=40$} \\
		& $T_{\Var}$ & $T_{\mathrm{PDB}}$ & $T_{\Var}$ & $T_{\mathrm{PDB}}$ & $T_{\Var}$ & $T_{\mathrm{PDB}}$ & $T_{\Var}$ & $T_{\mathrm{PDB}}$ \\\hline
		& \multicolumn{8}{c}{scenario I} \\
		$20$   & $0.051$ & $0.050$ & $0.053$ & $0.058$ & $0.053$ & $0.057$ & $0.059$ & $0.068$ \\
		$50$   & $0.055$ & $0.047$ & $0.046$ & $0.049$ & $0.051$ & $0.052$ & $0.050$ & $0.051$ \\
		$100$  & $0.051$ & $0.048$ & $0.041$ & $0.043$ & $0.050$ & $0.051$ & $0.050$ & $0.059$ \\
		$200$  & $0.045$ & $0.041$ & $0.035$ & $0.035$ & $0.061$ & $0.060$ & $0.048$ & $0.048$ \\
		$500$  & $0.051$ & $0.048$ & $0.049$ & $0.044$ & $0.053$ & $0.053$ & $0.048$ & $0.048$ \\
		$1000$ & $0.060$ & $0.052$ & $0.046$ & $0.042$ & $0.071$ & $0.071$ & $0.049$ & $0.047$ \\
		$2000$ & $0.067$ & $0.062$ & $0.059$ & $0.053$ & $0.057$ & $0.050$ & $0.060$ & $0.054$ \\
		$5000$ & $0.063$ & $0.050$ & $0.070$ & $0.045$ & $0.063$ & $0.043$ & $0.077$ & $0.061$ \\\hline
		& \multicolumn{8}{c}{scenario II} \\
		$20$   & $0.071$ & $0.065$ & $0.059$ & $0.060$ & $0.058$ & $0.059$ & $0.050$ & $0.063$ \\
		$50$   & $0.060$ & $0.058$ & $0.046$ & $0.044$ & $0.043$ & $0.043$ & $0.059$ & $0.062$ \\
		$100$  & $0.052$ & $0.044$ & $0.040$ & $0.044$ & $0.058$ & $0.056$ & $0.063$ & $0.067$ \\
		$200$  & $0.048$ & $0.050$ & $0.045$ & $0.043$ & $0.051$ & $0.051$ & $0.069$ & $0.067$ \\
		$500$  & $0.042$ & $0.042$ & $0.044$ & $0.044$ & $0.053$ & $0.054$ & $0.062$ & $0.065$ \\
		$1000$ & $0.069$ & $0.061$ & $0.052$ & $0.052$ & $0.056$ & $0.054$ & $0.063$ & $0.064$ \\
		$2000$ & $0.071$ & $0.063$ & $0.059$ & $0.053$ & $0.056$ & $0.047$ & $0.064$ & $0.059$ \\
		$5000$ & $0.056$ & $0.043$ & $0.062$ & $0.045$ & $0.066$ & $0.055$ & $0.064$ & $0.052$ \\\hline
		& \multicolumn{8}{c}{scenario III} \\
		$20$   & $0.048$ & $0.040$ & $0.041$ & $0.041$ & $0.042$ & $0.057$ & $0.045$ & $0.054$ \\
		$50$   & $0.054$ & $0.047$ & $0.044$ & $0.048$ & $0.056$ & $0.050$ & $0.041$ & $0.041$ \\
		$100$  & $0.053$ & $0.048$ & $0.049$ & $0.048$ & $0.050$ & $0.050$ & $0.046$ & $0.047$ \\
		$200$  & $0.046$ & $0.046$ & $0.041$ & $0.042$ & $0.052$ & $0.050$ & $0.035$ & $0.036$ \\
		$500$  & $0.058$ & $0.053$ & $0.046$ & $0.047$ & $0.045$ & $0.042$ & $0.053$ & $0.054$ \\
		$1000$ & $0.042$ & $0.040$ & $0.049$ & $0.045$ & $0.046$ & $0.042$ & $0.051$ & $0.049$ \\
		$2000$ & $0.049$ & $0.049$ & $0.058$ & $0.052$ & $0.050$ & $0.043$ & $0.053$ & $0.048$ \\
		$5000$ & $0.069$ & $0.053$ & $0.068$ & $0.048$ & $0.075$ & $0.062$ & $0.056$ & $0.043$ \\\hline
	\end{tabular}
	\label{tab:simu_size}
\end{table}

To compare the powers of these two tests, we generated i.i.d.~samples $\{\bY_1, \ldots, \bY_N\} \sim \bG_1$, $\{\bZ_1, \ldots, \bZ_N\} \sim \bG_2$, and the same three different combinations of $\bG_1$ and $\bG_2$ were considered as before. For $\bY_i=(Y_{i1}, \ldots, Y_{ip})^T$ and $\bZ_i=(Z_{i1}, \ldots, Z_{ip})^T$ under each scenario, we simulated $cor(Y_{ij}, Z_{ik})=\varrho^{|j-k-p|}$ for $j, k = 1, \ldots, p$ and $\varrho=0.05$, $0.1$ were considered. Under these setups, $\bY_i$ and $\bZ_i$ are dependent with each other. %Two testing procedures based on the distributed distance covariance $dcov_{N,K}^2(\bY,\bZ)$ were carried out to test the independence between $\bY_i$ and $\bZ_i$. Table \ref{tab:simu_power1} and \ref{tab:simu_power2} give the empirical powers of $T_{\Var}$ and $T_{\mathrm{PDB}}$.
Figure \ref{fig:dcov} gives the empirical powers of $T_{\Var}$ and $T_{\mathrm{PDB}}$.

\begin{figure}[h]
	\centering
	\subfigure[Scenario I, $\rho=0.05$]{%
		\includegraphics[width=0.45\linewidth]{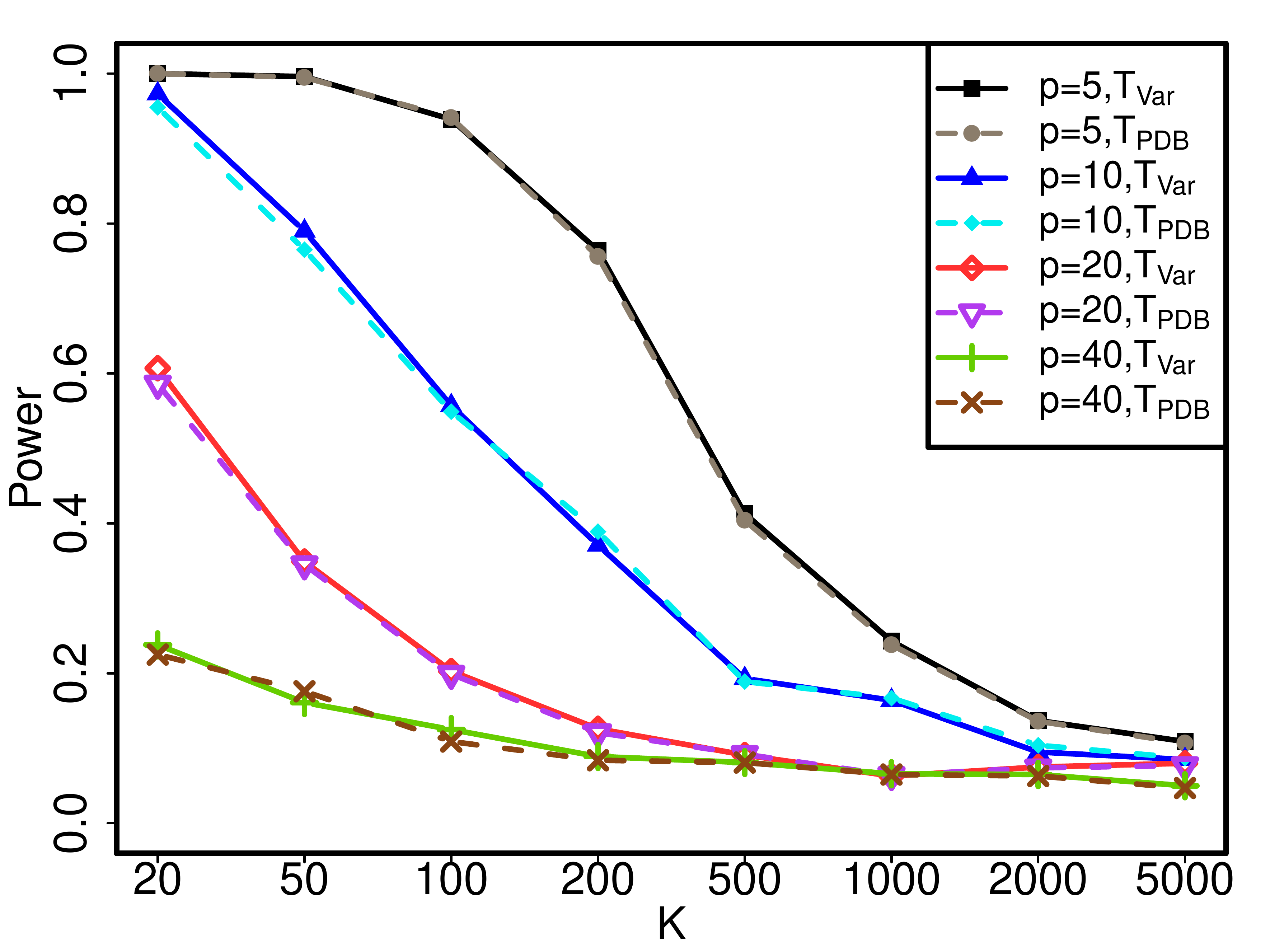}
		\label{fig:dcov_n_5}}
	\quad
	\subfigure[Scenario I, $\rho=0.1$]{%
		\includegraphics[width=0.45\linewidth]{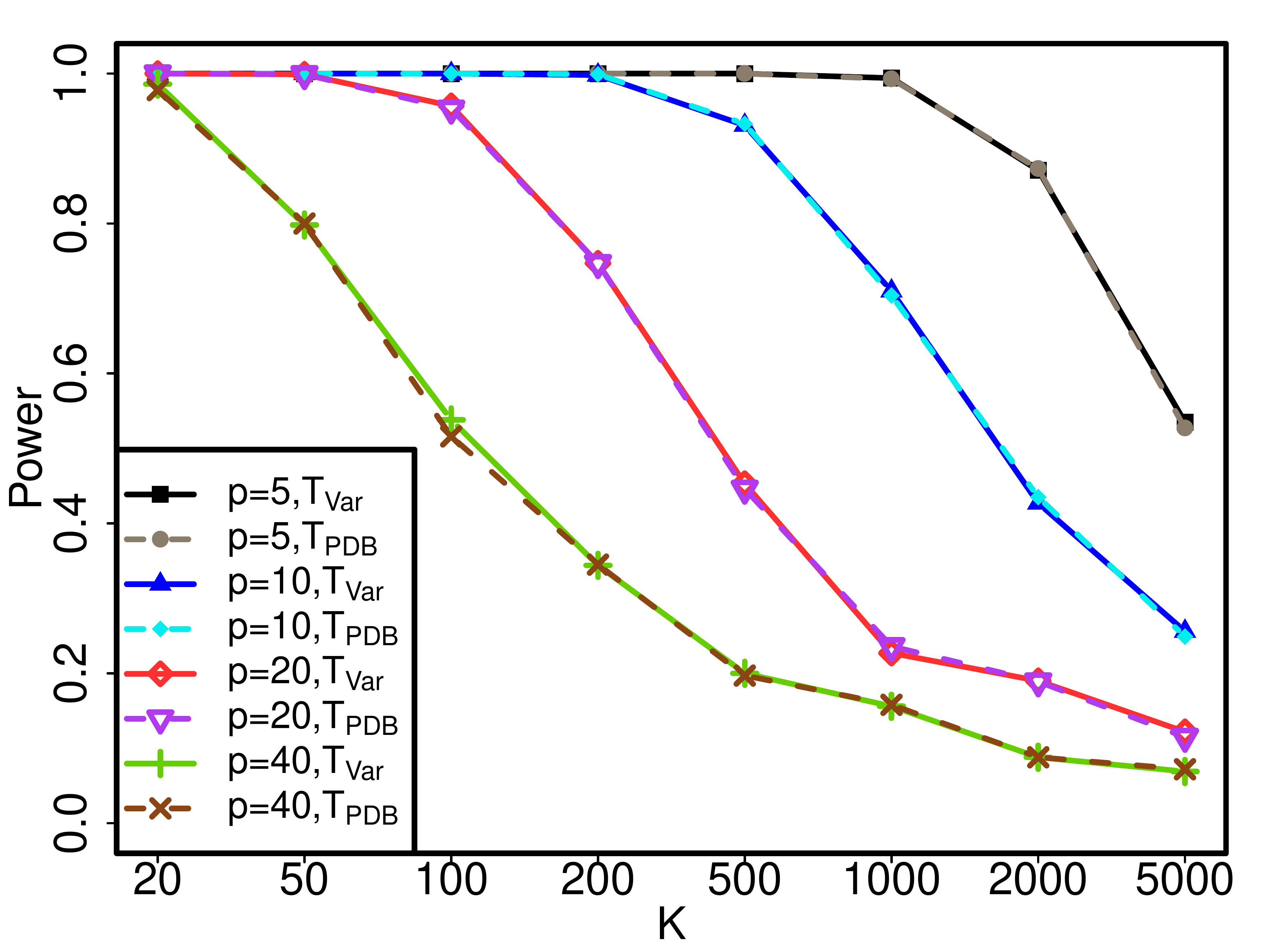}
		\label{fig:dcov_n_10}}
	\quad
	\subfigure[Scenario II, $\rho=0.05$]{%
		\includegraphics[width=0.45\linewidth]{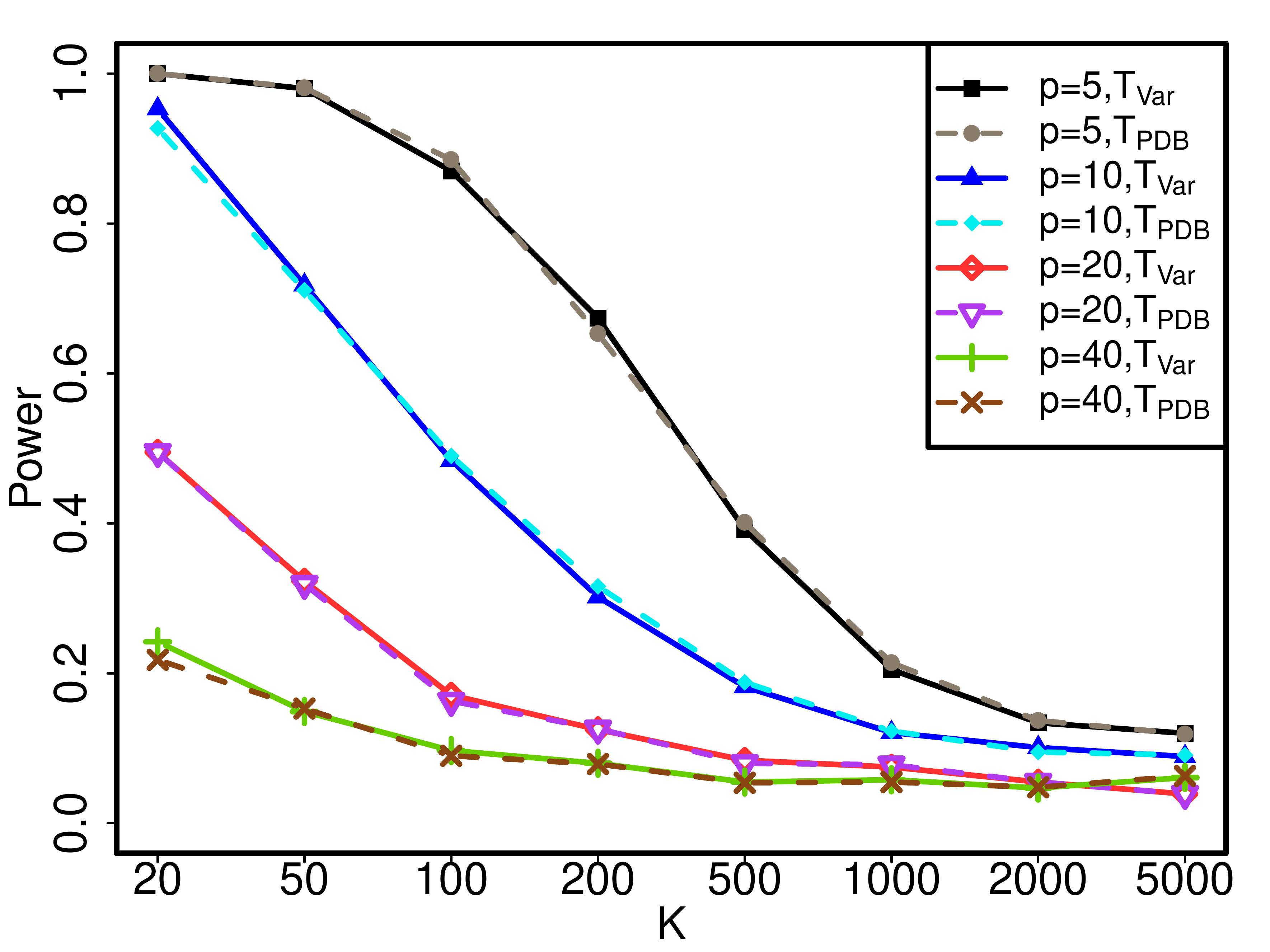}
		\label{fig:dcov_nt_5}}
	\quad
	\subfigure[Scenario II, $\rho=0.1$]{%
		\includegraphics[width=0.45\linewidth]{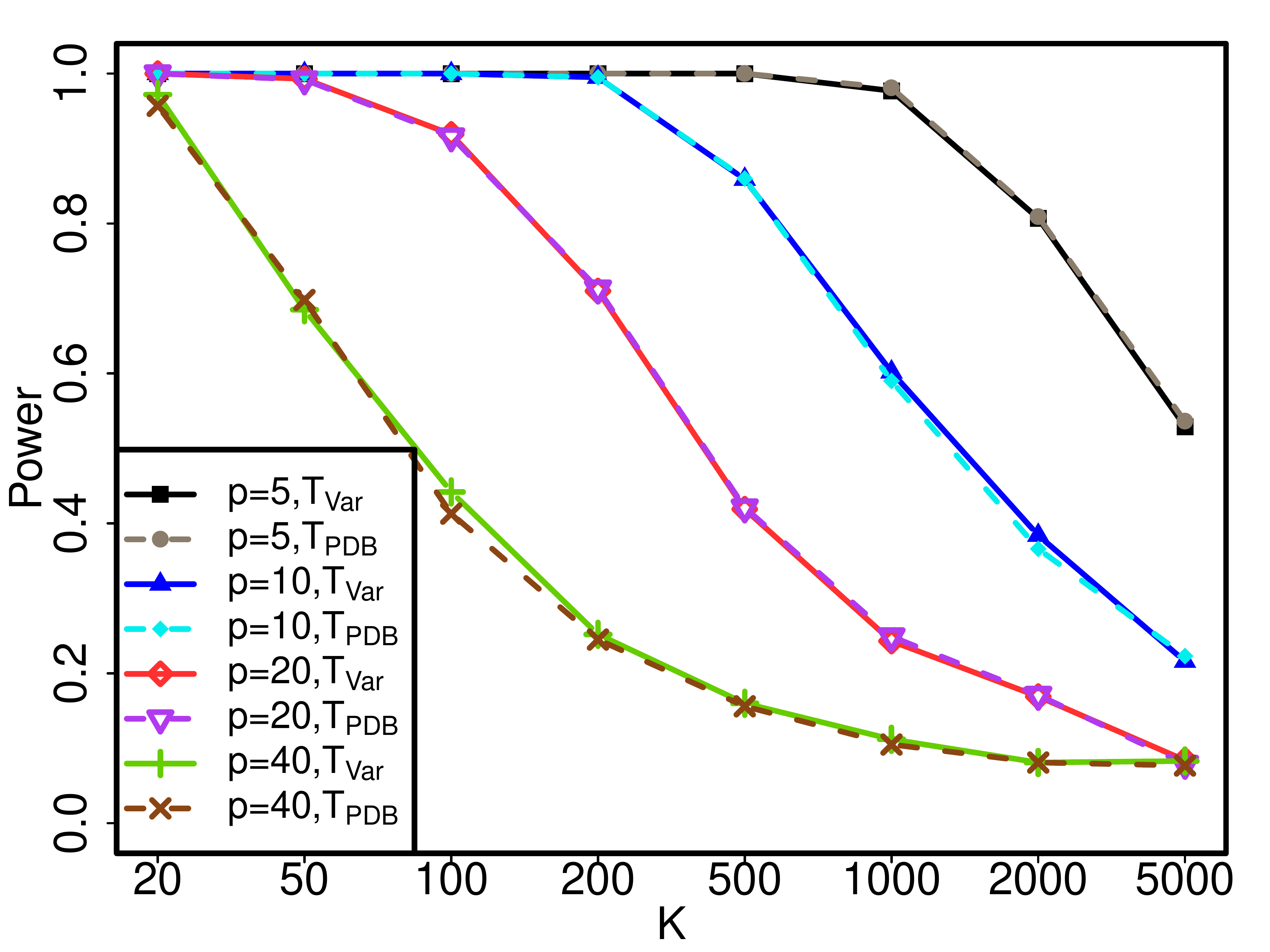}
		\label{fig:dcov_nt_10}}
	\quad
	\subfigure[Scenario III, $\rho=0.05$]{%
		\includegraphics[width=0.45\linewidth]{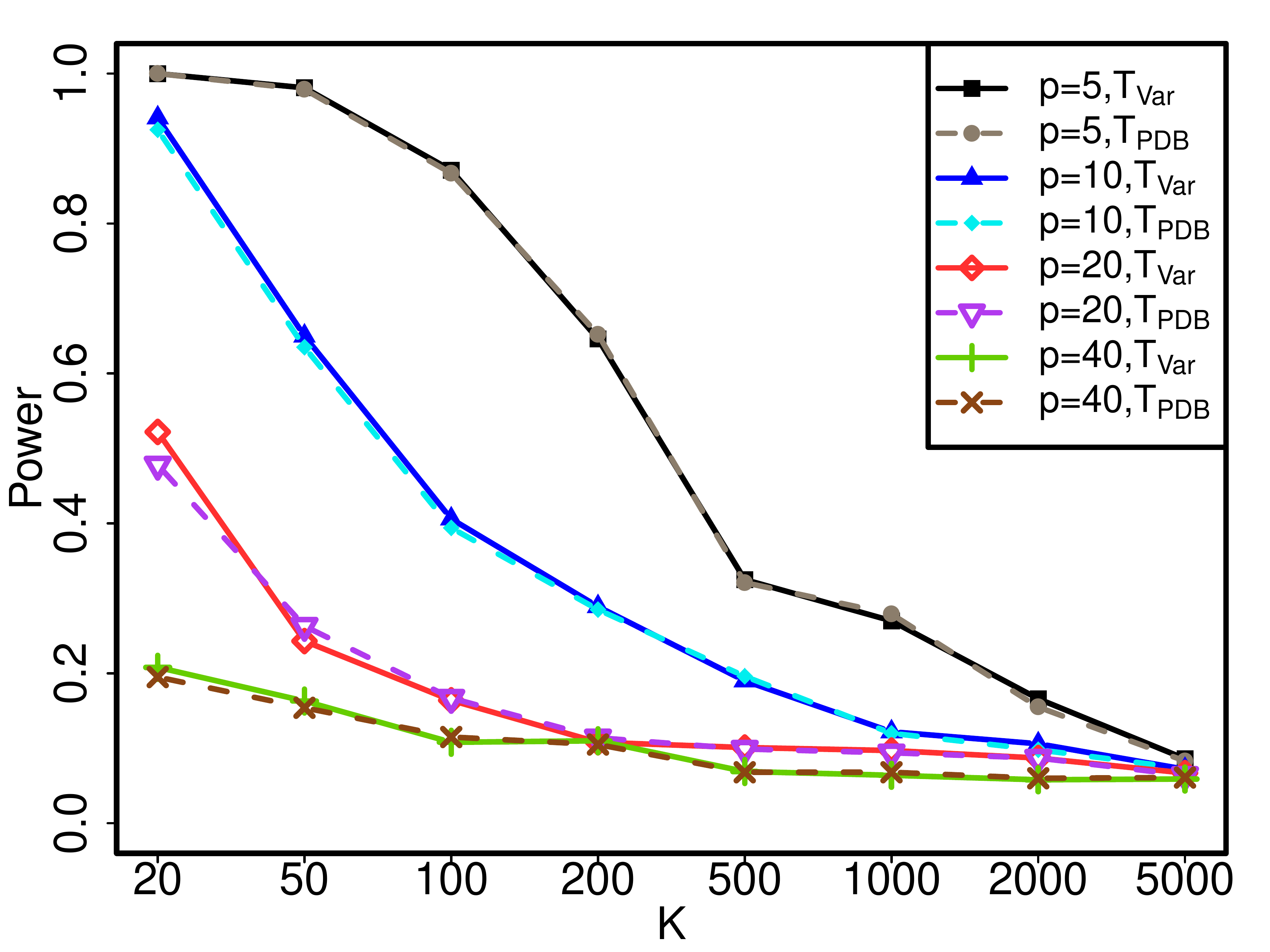}
		\label{fig:dcov_t_5}}
	\quad
	\subfigure[Scenario III, $\rho=0.1$]{%
		\includegraphics[width=0.45\linewidth]{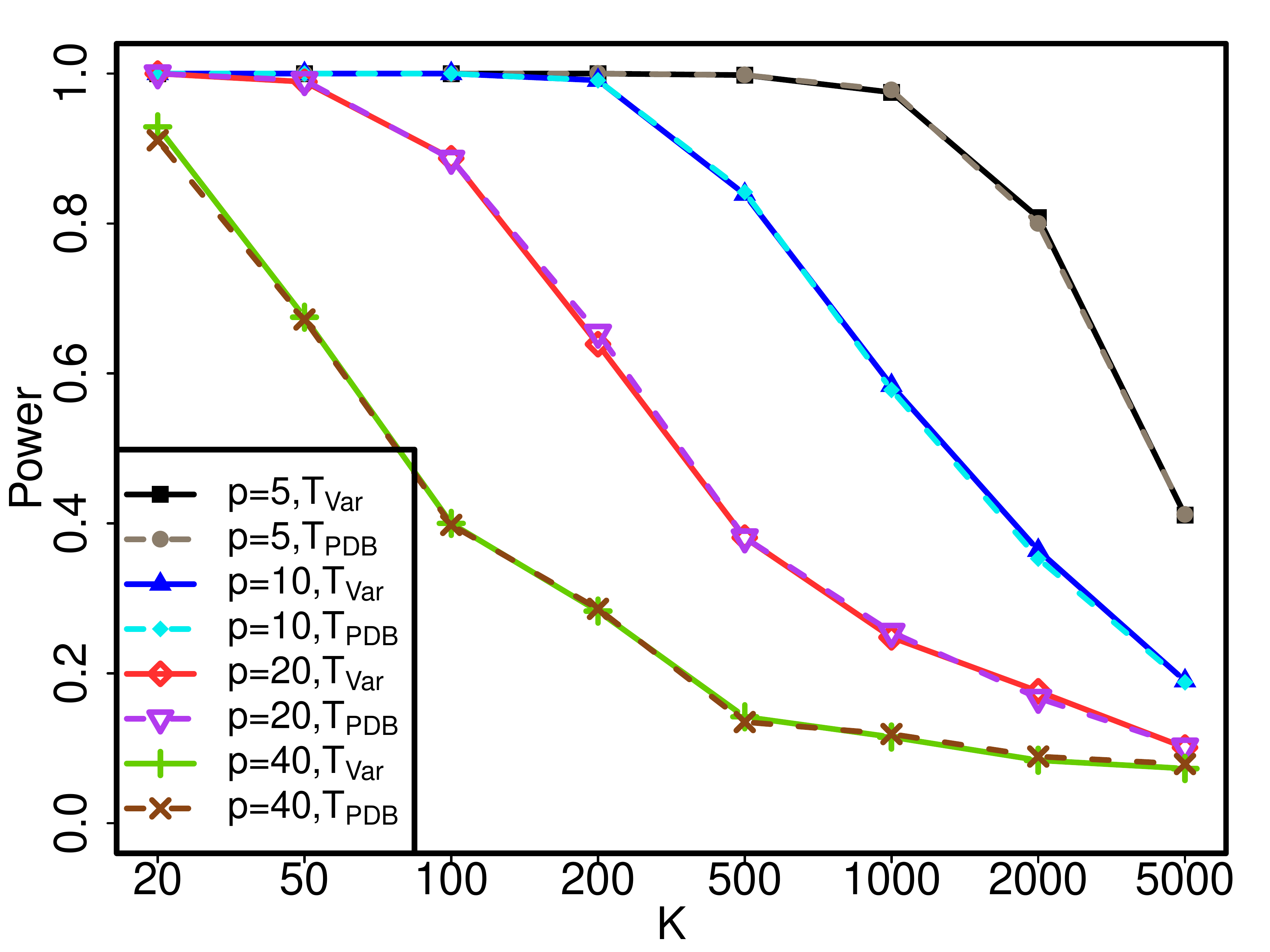}
		\label{fig:dcov_t_10}}
	\quad	
	\caption{Powers of Independence tests based on $dcov_{N,K}^2(\bY,\bZ)$.}
	\label{fig:dcov}
\end{figure}

From Figure \ref{fig:dcov}, it is clear that the empirical powers of these two tests decrease as the dimension $p$ increases. In addition, as the number of data blocks $K$ increases, the empirical powers of the tests also decrease. This is due to the increase in the variance of $dcov_{N,K}^2(\bY,\bZ)$ when $K$ increases. This is the price we need to pay for using the distributed distance covariance. The computing time and memory requirement can be reduced by increasing the number of data blocks, however, this will result in the power loss of the tests.% This coincides with the discussions on degenerate $T_N$ and the asymptotic efficiency of $T_{N,K}$.

\subsection{Additional tables and figures}

\begin{table}[p]
	\centering
	\caption{Algorithm for the distributed bootstrap.}
	\begin{tabular}{|l|}
		\hline
		\\
		\textit{Input}: data $\mathfrak{X}_{N,K}^{(1)}, \ldots, \mathfrak{X}_{N,K}^{(K)}$; $K$, number of subsets; $n_1,\ldots,n_K$, subset sizes; \\
		\ \ \ \ \  $B$, number of Monte Carlo iterations; $T$, function deriving statistic \\	
		\textit{Output}: an estimate of the distribution of $N^{-1/2}(T_{N,K}-\theta)$ \\	 
		For $ k \leftarrow 1 $ to $K$ do \\
		\ \ \ \ \ 	compute $\hat{\theta}^{(k)}_{N,K}=\theta(F_{N,K}^{(k)})$ \\
		\ \ \ \ \ 	for $ b\leftarrow1$ to $B$ do \\
		\ \ \ \ \ \ \ \ \ \  generate resample $\mathfrak{X}_{N,K}^{*b(k)}$ from $\mathfrak{X}_{N,K}^{(k)}$ \\
		\ \ \ \ \ \ \ \ \ \  compute $T_{N,K}^{*b(k)}=T\big(\mathfrak{X}_{N,K}^{*b(k)}\big)$ \\
		\ \ \ \ \ 	end \\ 
		End \\
		Compute $\hat{\theta}_{N,K}=N^{-1}\sum_{k=1}^{K}n_k\hat{\theta}^{(k)}_{N,K}$ \\
		For $b\leftarrow1$ to $B$ do \\
		\ \ \ \ \ 	compute $T_{N,K}^{*b}=N^{-1}\sum_{k=1}^{K}n_kT_{N,K}^{*b(m)}$ \\
		\ \ \ \ \  compute $N^{1/2}(T_{N,K}^{*b}-\hat{\theta}_{N,K})$ \\
		End \\
		\\
		\hline
	\end{tabular}
	\label{tab:01}
\end{table}

\begin{table}[h]
	\centering
	\caption{Algorithm for the pseudo-distributed bootstrap.}
	\begin{tabular}{|l|}
		\hline
		\\
		\textit{Input}: data $\mathfrak{X}_{N,K}^{(1)}, \ldots, \mathfrak{X}_{N,K}^{(K)}$; $K$, number of subsets; $n_1,\ldots,n_K$, subset sizes; \\
		\ \ \ \ \  $B$, number of Monte Carlo iterations; $T$, function deriving statistic \\	 
		\textit{Output}: an estimate of the distribution of $N^{1/2}(T_{N, K}-\theta)$ \\	 
		For $ k \leftarrow 1 $ to $ K $ do \\
		\ \ \ \ \ 	compute $T_{N, K}^{(k)}=T\big(\mathfrak{X}_{N,K}^{(k)}\big)$ \\
		\ \ \ \ \   compute $\mathcal{T}_{N,K}^{(k)}=N^{-1/2}K^{1/2}n_kT_{N,K}^{(k)}$ \\
		End \\
		Compute $T_{N,K}=N^{-1}\sum_{k=1}^Kn_kT_{N, K}^{(k)}$ \\
		For $ b \leftarrow 1 $ to $ B $ do \\
		\ \ \ \ \  generate resample $\mathcal{T}_{N, K}^{*b(1)},\ldots, \mathcal{T}_{N, K}^{*b(K)}$ from $\mathcal{T}_{N, K}^{(1)},\ldots, \mathcal{T}_{N, K}^{(K)}$ \\
		\ \ \ \ \  compute $\mathcal{T}_{N,K}^{*b}=K^{-1}\sum_{k=1}^K\mathcal{T}_{N, K}^{*b(k)}$ \\
		End \\
		For $b\leftarrow1$ to $B$ do \\
		\ \ \ \ \ 	compute $K^{1/2}\big(\mathcal{T}_{N,K}^{*b}-N^{1/2}K^{-1/2}T_{N,K}\big)$ \\
		End \\
		\\
		\hline
	\end{tabular} 
	\label{tab:02}
\end{table}

\renewcommand{\arraystretch}{1}
\begin{table}[h]
	\centering
	\caption{Coverage probabilities and widths (in parentheses) of the $95\%$ confidence intervals of the four bootstrap methods with $10$ seconds time budget for the Gamma data. DB: the distributed bootstrap; PDB: the pseudo-distributed bootstrap; BLB: the bag of little bootstrap; SDB: the subsampled double bootstrap.}
	\begin{tabular}{c|cccccc}
		\hline
		& \multicolumn{6}{c}{K} \\
		& $20$ & $50$ & $100$ & $200$ & $500$ & $1000$ \\\hline
		PDB & $0.924$ & $0.936$ & $0.940$ & $0.942$ & $0.942$ & $0.944$ \\
		& $(0.01940)$ & $(0.01998)$ & $(0.02023)$ & $(0.02031)$ & $(0.02033)$ & $(0.02041)$ \\
		\hline
		DB & $0.928$ & $0.938$ & $0.942$ & $0.946$ & $0.942$ & $0.946$ \\
		& $(0.01963)$ & $(0.02002)$ & $(0.02024)$ & $(0.02018)$ & $(0.02022)$ & $(0.02011)$ \\
		\hline
		BLB & NA & $0.934$ & $0.936$ & $0.926$ & $0.930$ & $0.930$ \\
		& (NA) & $(0.01951)$ & $(0.01948)$ & $(0.01938)$ & $(0.01925)$ & $(0.01899)$ \\
		\hline
		SDB & $0.926$ & $0.938$ & $0.940$ & $0.948$ & $0.944$ & $0.948$ \\
		& $(0.01964)$ & $(0.02003)$ & $(0.02024$ & $(0.02024)$ & $(0.02040)$ & $(0.02036)$ \\\hline
	\end{tabular}
	\label{tab:simu_cr_gam}
\end{table}

\renewcommand{\arraystretch}{1}
\begin{table}[h]
	\centering
	\caption{Coverage probabilities and widths (in parentheses) of the $95\%$ confidence intervals of the four bootstrap methods with $10$ seconds time budget for the Poisson data.}
	\begin{tabular}{c|cccccc}
		\hline
		& \multicolumn{6}{c}{K} \\
		& $20$ & $50$ & $100$ & $200$ & $500$ & $1000$ \\\hline
		PDB & $0.921$ & $0.938$ & $0.941$ & $0.945$ & $0.949$ & $0.949$ \\
		& $(0.01986)$ & $(0.02032)$ & $(0.02046)$ & $(0.02053)$ & $(0.02059)$ & $(0.02066)$ \\
		\hline
		DB & $0.944$ & $0.932$ & $0.946$ & $0.944$ & $0.948$ & $0.952$ \\
		& $(0.01987)$ & $(0.02017)$ & $(0.02035)$ & $(0.02052)$ & $(0.02046)$ & $(0.02042)$ \\
		\hline
		BLB & NA & $0.936$ & $0.940$ & $0.938$ & $0.932$ & $0.932$ \\
		& (NA) & $(0.01970)$ & $(0.01968)$ & $(0.01962)$ & $(0.01953)$ & $(0.01940)$ \\
		\hline
		SDB & $0.922$ & $0.936$ & $0.952$ & $0.946$ & $0.950$ & $0.948$ \\
		& $(0.01976)$ & $(0.02017)$ & $(0.02041)$ & $(0.02049)$ & $(0.02054)$ & $(0.02052)$ \\\hline
	\end{tabular}
	\label{tab:simu_cr_pois}
\end{table}

\begin{figure}[htp]
	\centering
	\subfigure{%
		\includegraphics[width=0.45\linewidth]{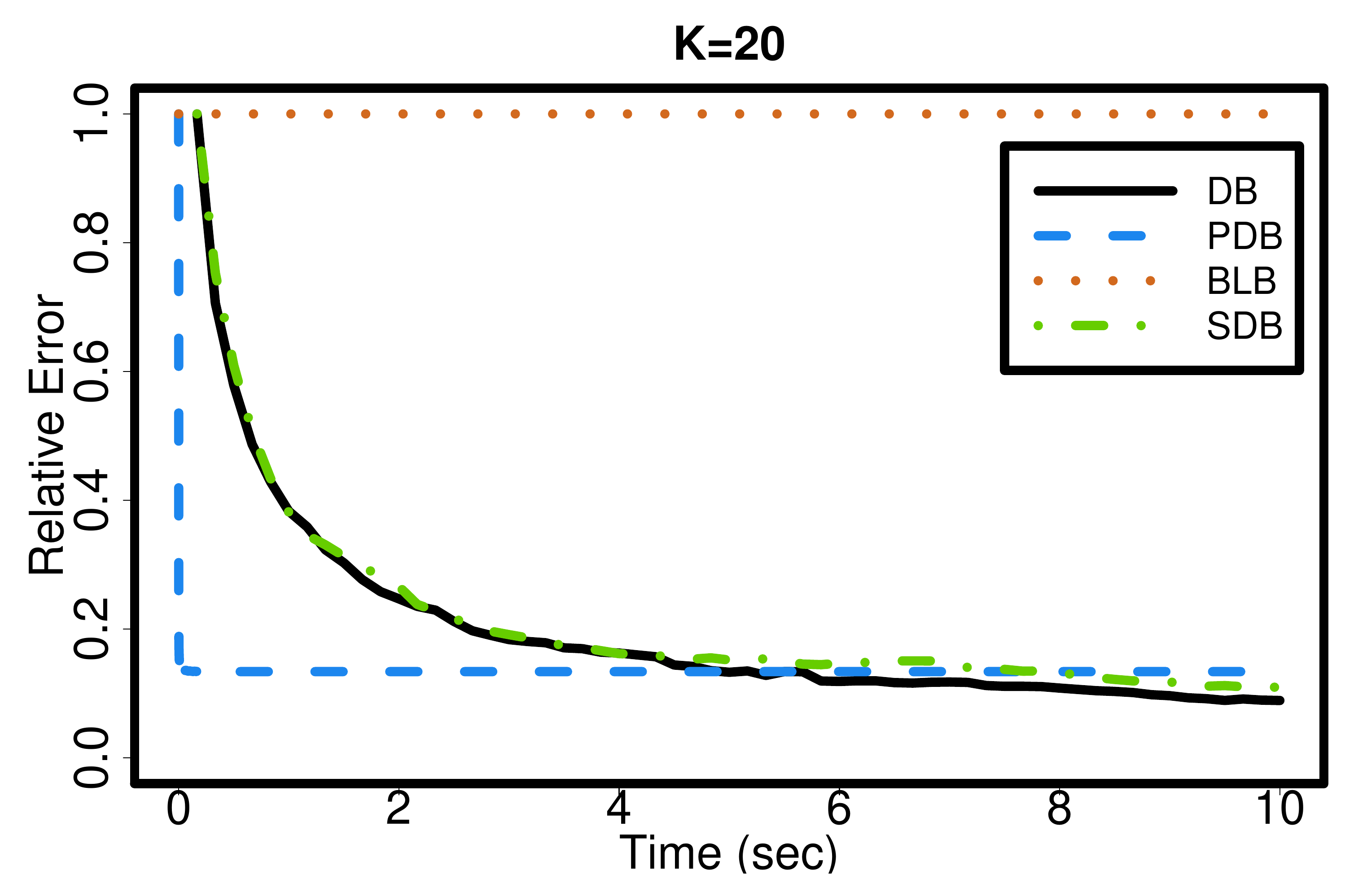}
		\label{fig:gam_K_20}}
	\quad
	\subfigure{%
		\includegraphics[width=0.45\linewidth]{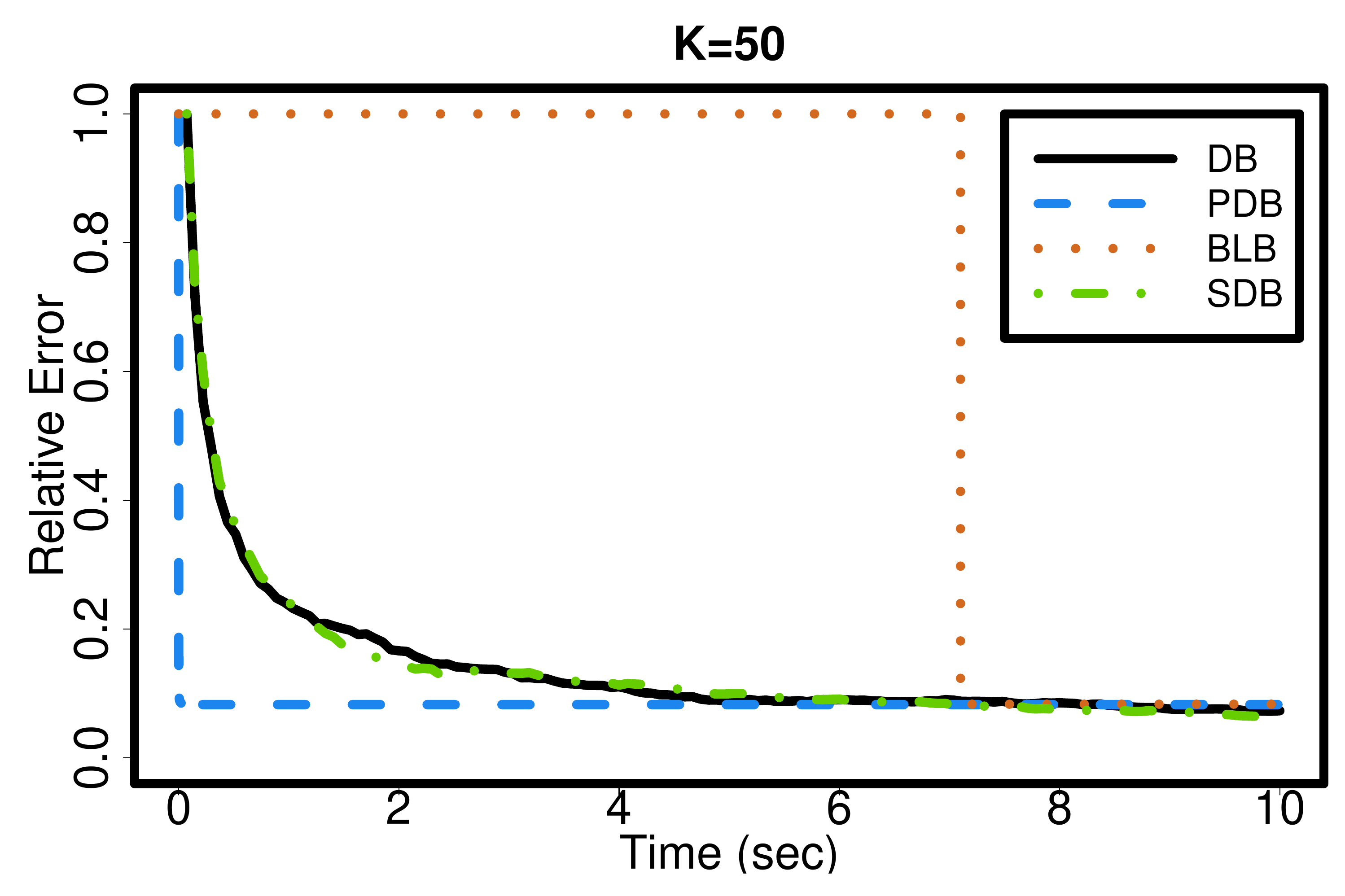}
		\label{fig:gam_K_50}}
	\quad
	\subfigure{%
		\includegraphics[width=0.45\linewidth]{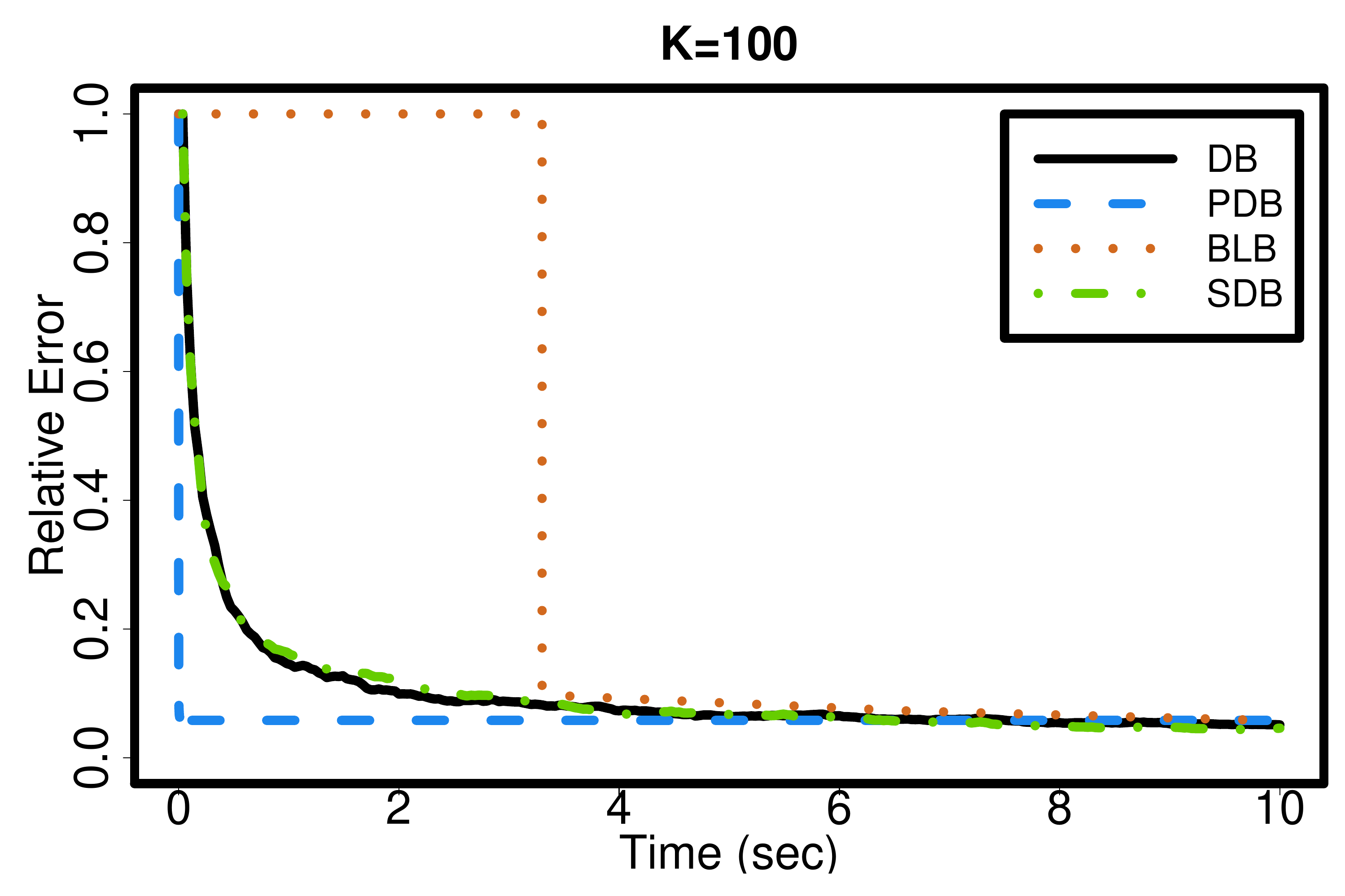}
		\label{fig:gam_K_100}}
	\quad
	\subfigure{%
		\includegraphics[width=0.45\linewidth]{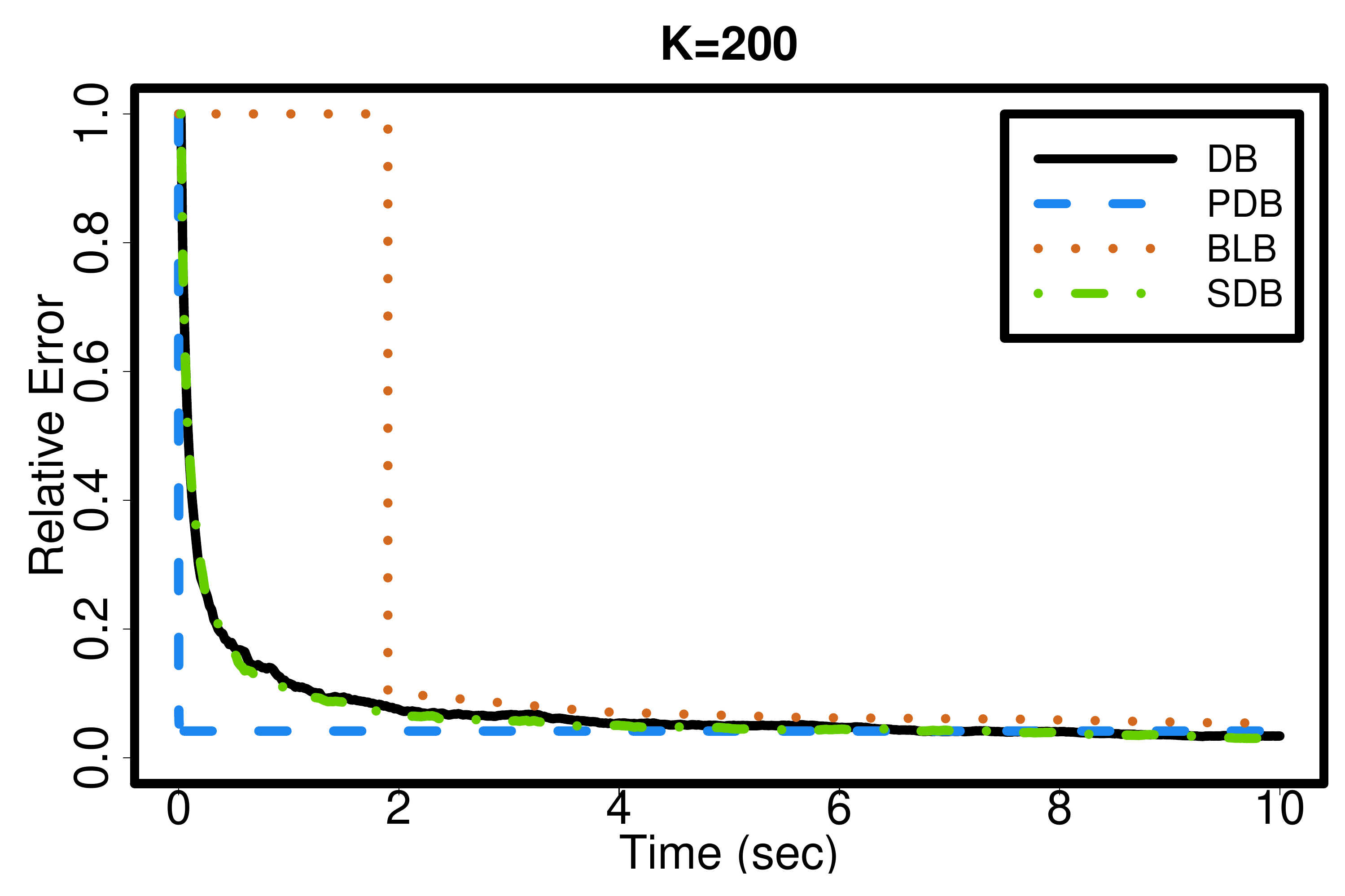}
		\label{fig:gam_K_200}}
	\quad
	\subfigure{%
		\includegraphics[width=0.45\linewidth]{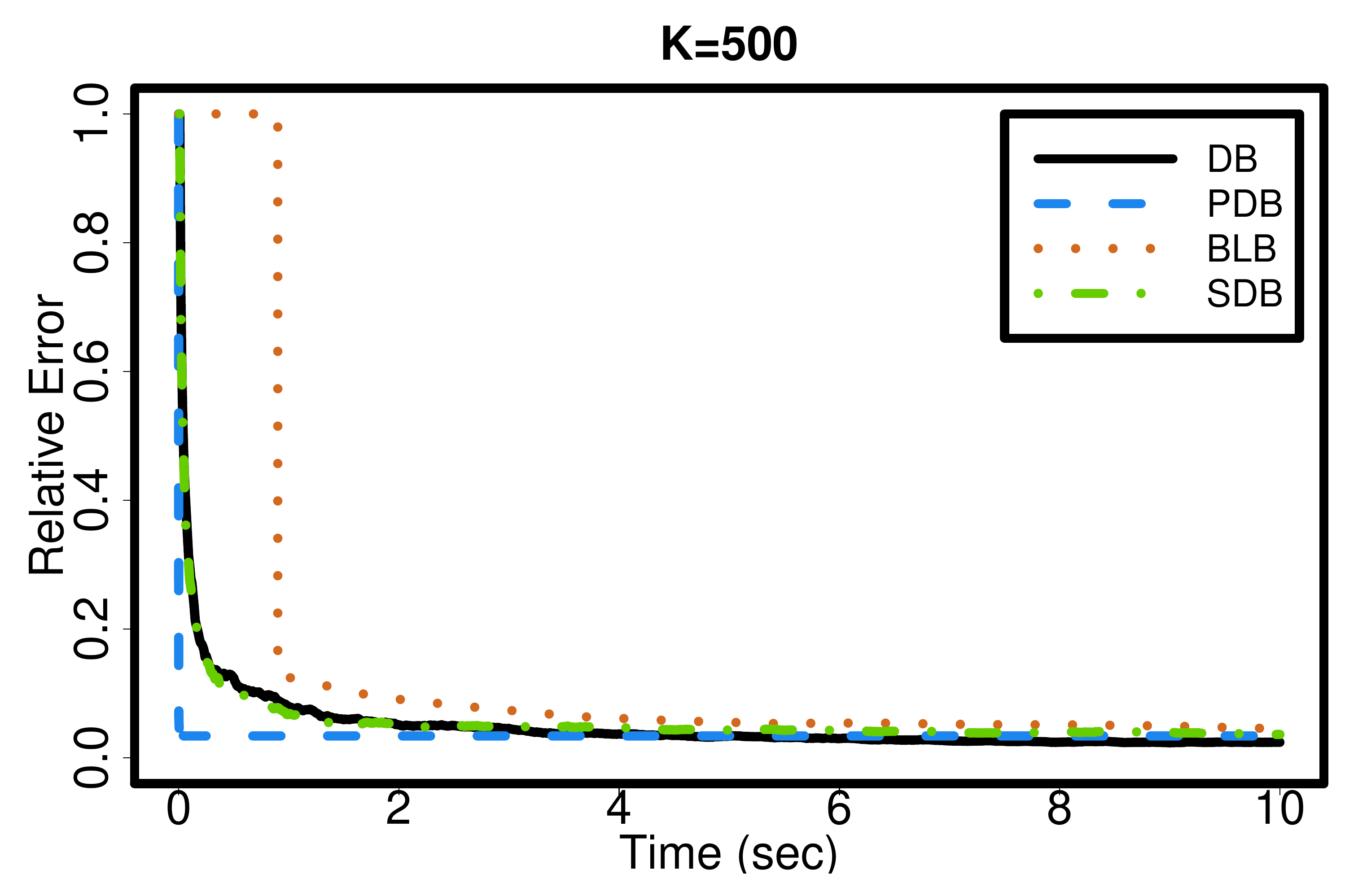}
		\label{fig:gam_K_500}}
	\quad
	\subfigure{%
		\includegraphics[width=0.45\linewidth]{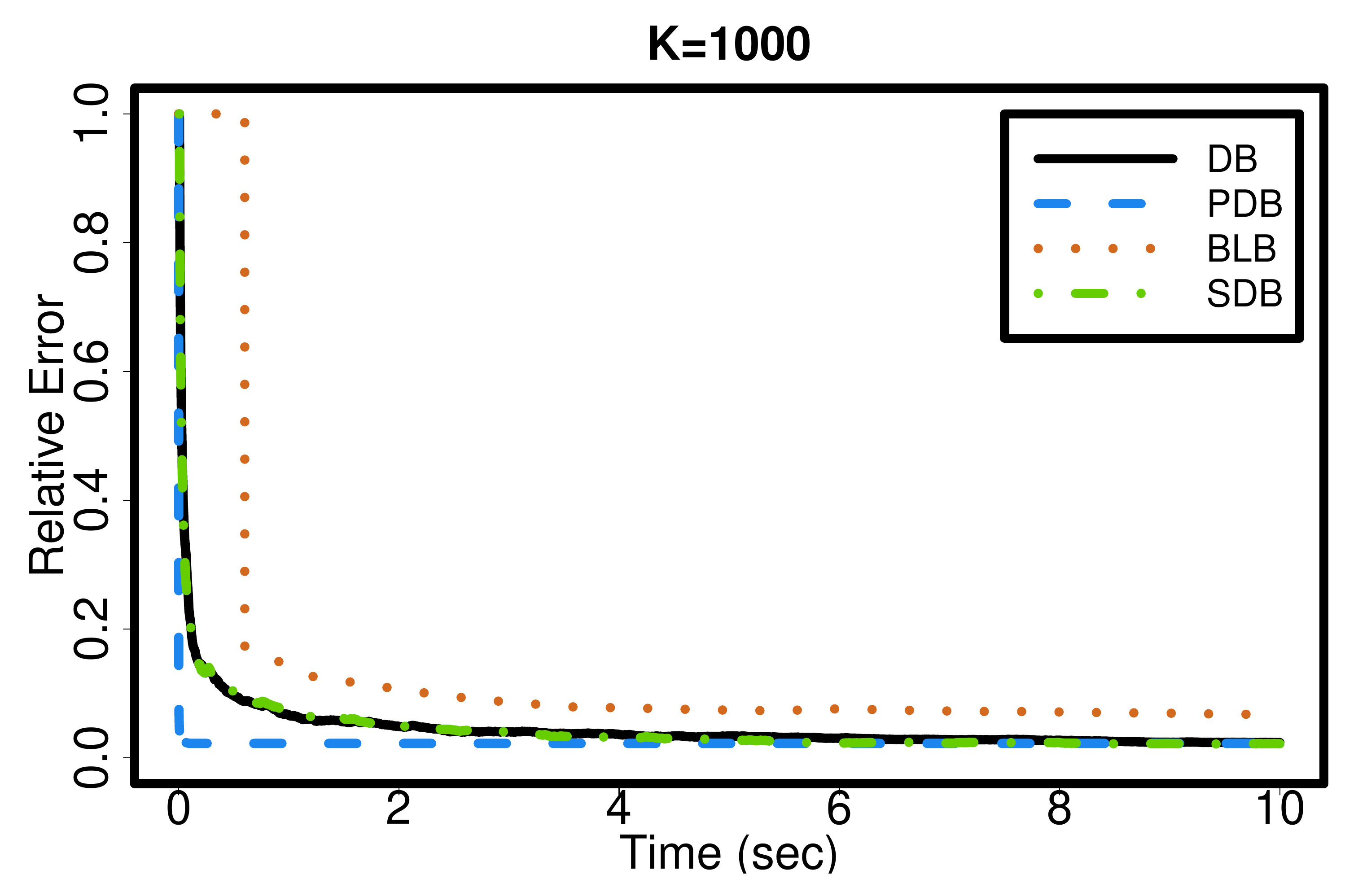}
		\label{fig:gam_K_1000}}
	\quad	
	\caption{Time evolution of the relative errors $|\hat{d}-d|/d$ in the width of the confidence interval under the Gamma scenario with respect to different black size. DB: the distributed bootstrap; PDB: the pseudo-distributed bootstrap (dashed lines); BLB: the bag of little bootstrap (dotted lines); SDB: the subsampled double bootstrap (dot-dashed lines).}
	\label{fig:gam_ciw}
\end{figure}

\begin{figure}[htp]
	\centering
	\subfigure{%
		\includegraphics[width=0.45\linewidth]{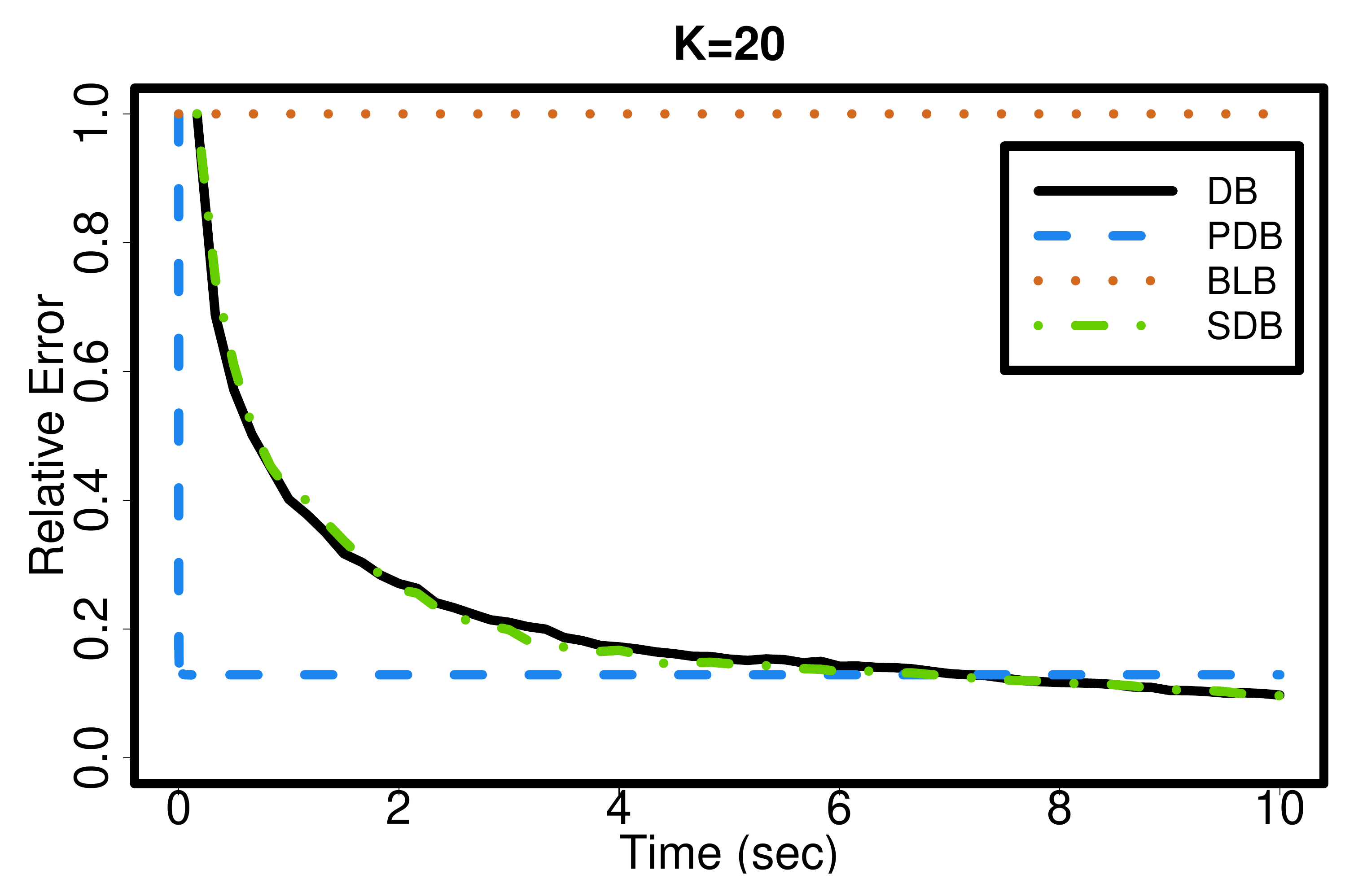}
		\label{fig:pois_K_20}}
	\quad
	\subfigure{%
		\includegraphics[width=0.45\linewidth]{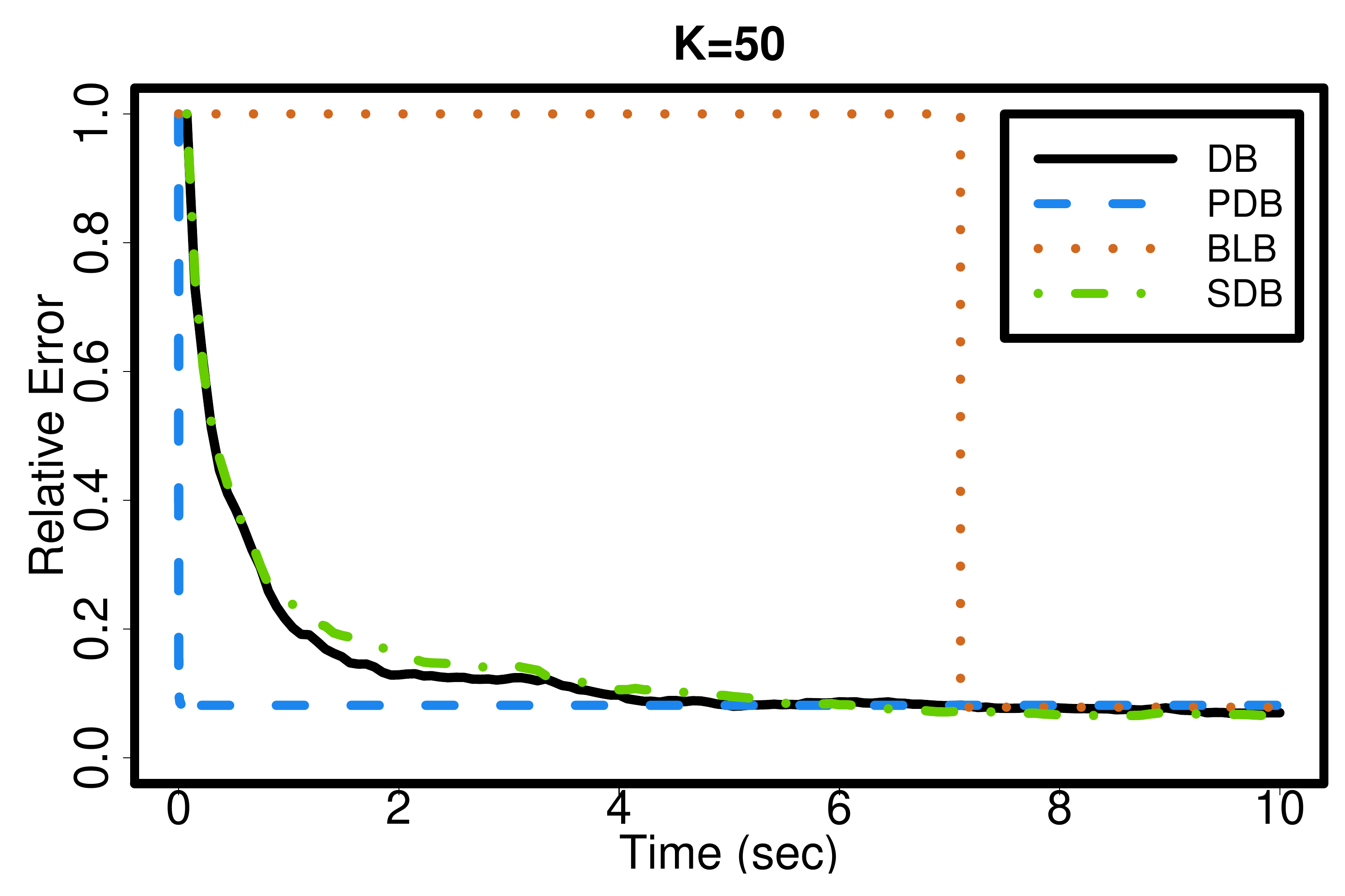}
		\label{fig:pois_K_50}}
	\quad
	\subfigure{%
		\includegraphics[width=0.45\linewidth]{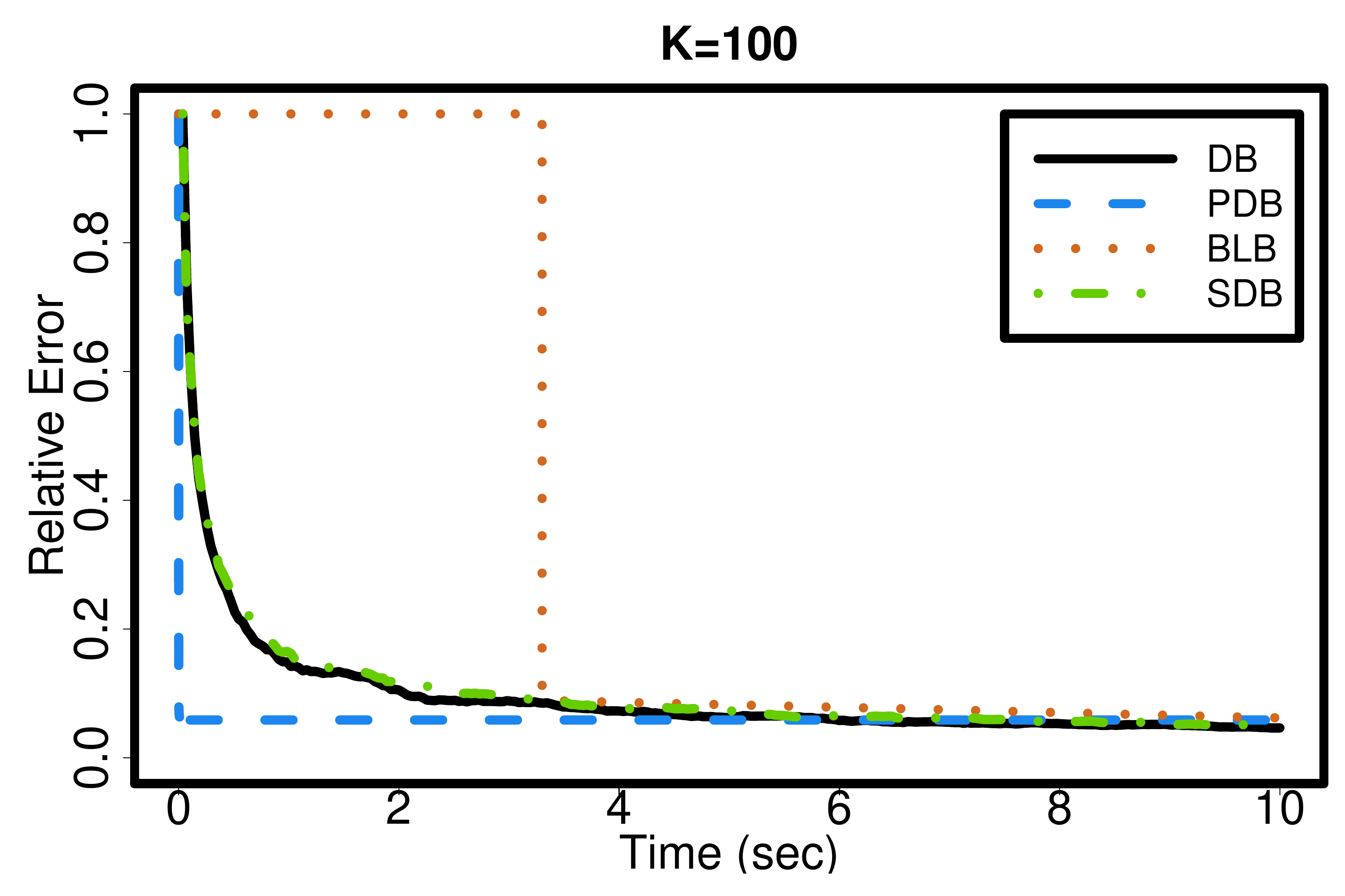}
		\label{fig:pois_K_100}}
	\quad
	\subfigure{%
		\includegraphics[width=0.45\linewidth]{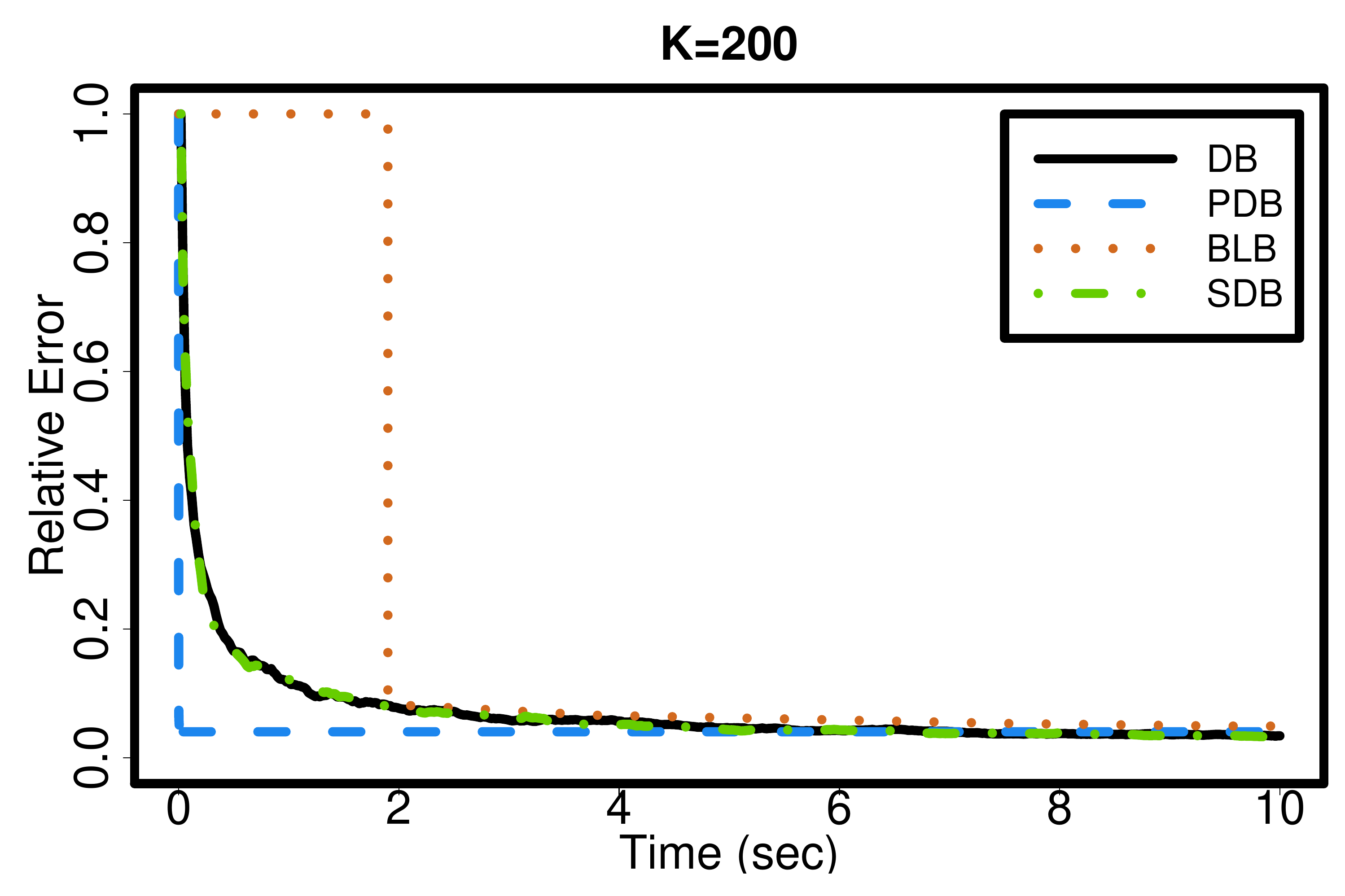}
		\label{fig:pois_K_200}}
	\quad
	\subfigure{%
		\includegraphics[width=0.45\linewidth]{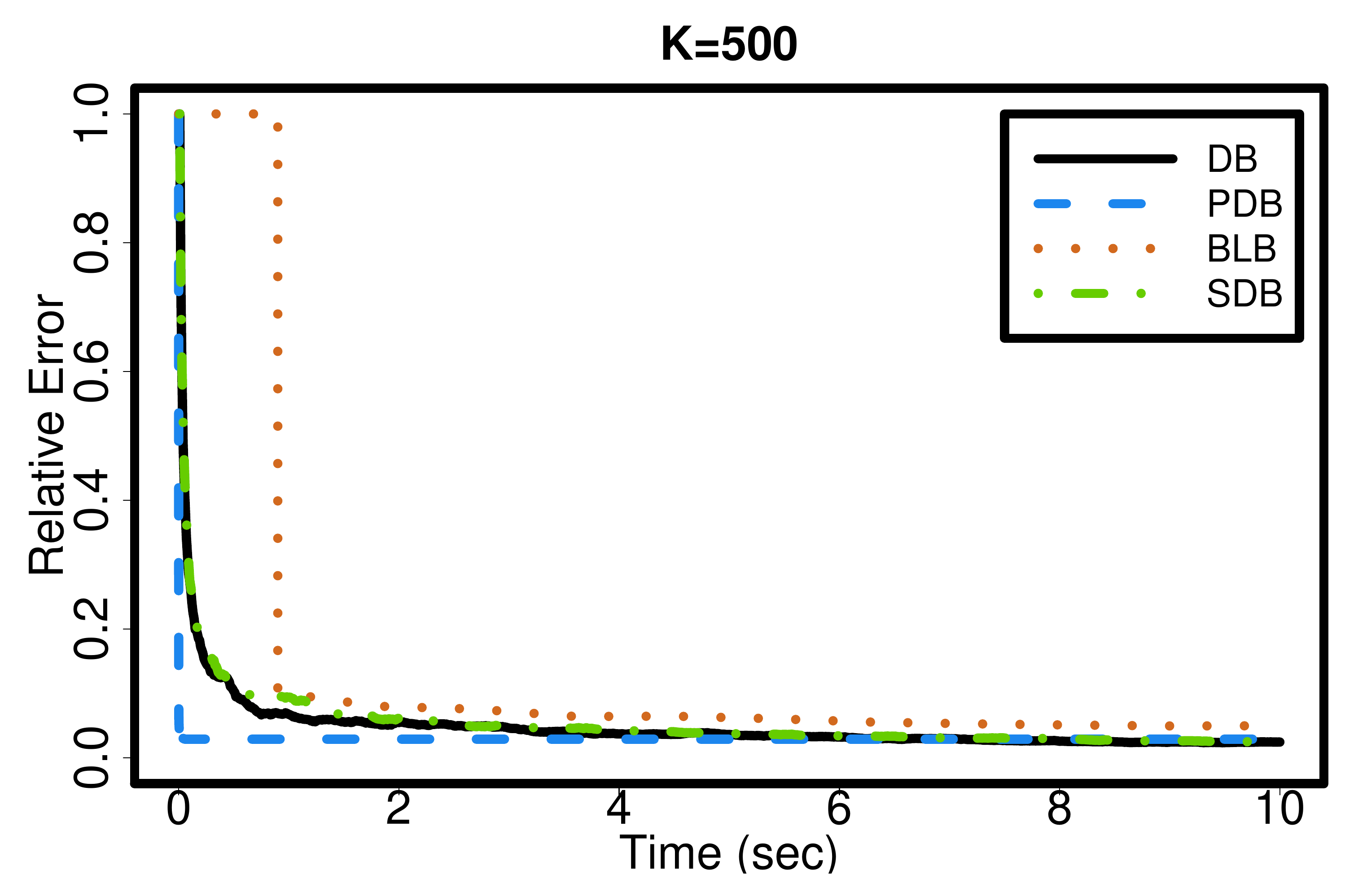}
		\label{fig:pois_K_500}}
	\quad
	\subfigure{%
		\includegraphics[width=0.45\linewidth]{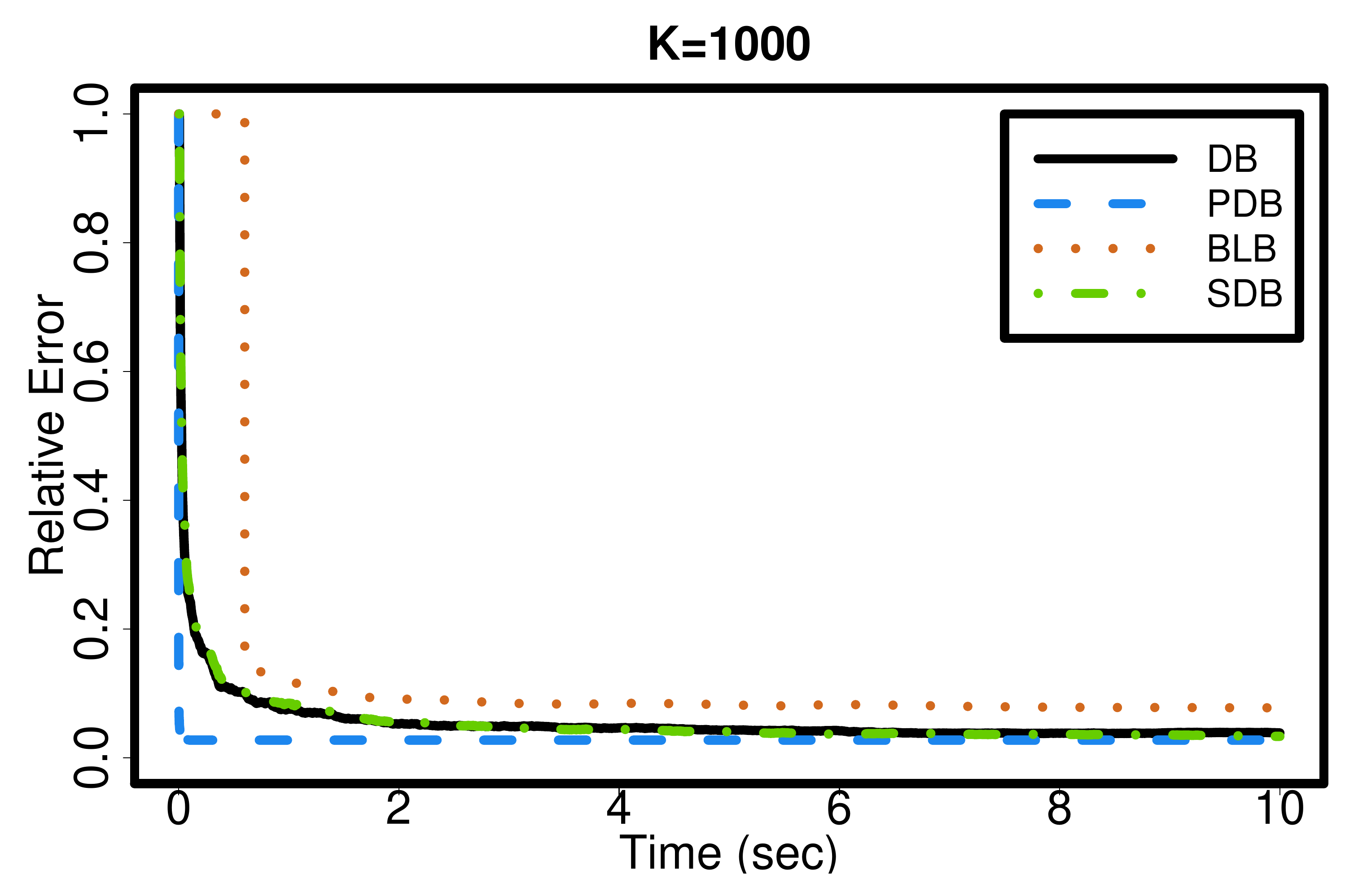}
		\label{fig:pois_K_1000}}
	\quad	
	\caption{Time evolution of the relative errors $|\hat{d}-d|/d$ in the width of the confidence interval under the Poisson scenario with respect to different black size. DB: the distributed bootstrap; PDB: the pseudo-distributed bootstrap (dashed lines); BLB: the bag of little bootstrap (dotted lines); SDB: the subsampled double bootstrap (dot-dashed lines).}
	\label{fig:pois_ciw}
\end{figure}

\renewcommand{\arraystretch}{1.25}
\begin{table}[h]
	\centering
	\caption{Variables considered in the airline on-time performance data.}
	\begin{tabular}{|c|l|}
		\hline
		Feature Variable & Description \\\hline
		\multirow{2}{*}{DIF\_ELAPS (DIF\_ELP)} & Difference in minutes between actual and scheduled elapsed \\
		&  time.  ($\text{ACT\_ELAPS} - \text{CRS\_ELAPS}$) \\\hline
		\multirow{2}{*}{DEP\_DELAY (DEP\_DLY)} & Difference in minutes between scheduled and actual \\
		& departure time. \\\hline
		\multirow{2}{*}{TAXI\_OUT (TX\_OUT)} & Difference in minutes between departure from the origin \\
		& airport gate and wheels off. \\\hline
		\multirow{2}{*}{TAXI\_IN (TX\_IN)} & Differences in minutes between wheels down and arrival at \\
		& the destination airport gate. \\\hline
		\multirow{2}{*}{ACT\_ELAPS (ACT\_ELP)} & Actual elapsed time between departure from the origin airport \\
		& gate and arrival at the destination airport gate in minutes. \\\hline
		\multirow{2}{*}{CRS\_ELAPS (CRS\_ELP)} & Scheduled elapsed time between departure and arrival in \\
		& minutes, CRS is short for Computer Reservation System. \\\hline
		DISTANCE (DIST) & Distance between departure and arrival airports in miles. \\\hline
		MONTH (MON) & Month of flight. \\\hline
		DAY\_OF\_MONTH (DOM) & Day of month. \\\hline
		DAY\_OF\_WEEK (DOW) & Day of week. \\\hline
	\end{tabular}
	\label{tab:features}
\end{table}

\end{document}